\documentclass[10pt]{article}
\usepackage[latin1]{inputenc}
\usepackage[T1]{fontenc}

\usepackage[bitstream-charter]{mathdesign}

\usepackage{color}
\usepackage[usenames,dvipsnames,svgnames,table]{xcolor}
\usepackage{amsmath}
\usepackage{amsthm}
\swapnumbers
\usepackage[alphabetic]{amsrefs}
\usepackage[all]{xy}
\usepackage{mathtools}
\usepackage{stmaryrd}
\usepackage{enumitem}
\usepackage{fixltx2e}
\usepackage[defaultlines=3,all]{nowidow}
\usepackage{tabularx}
\usepackage{caption}

\usepackage{geometry}

\usepackage{microtype}

\theoremstyle{plain}
\newtheorem{thm}{Theorem}[section]
\newtheorem{lma}[thm]{Lemma}
\newtheorem{cor}[thm]{Corollary}
\newtheorem{prop}[thm]{Proposition}
\newtheorem*{announcethm}{Theorem}
\newtheorem*{announceprop}{Proposition}

\theoremstyle{definition}
\newtheorem{dfn}[thm]{Definition}

\theoremstyle{remark}
\newtheorem{rem}[thm]{Remark}
\newtheorem{ex}[thm]{Example}
\newtheorem{conv}[thm]{Convention}
\newtheorem{notn}[thm]{Notation}

\DeclareMathOperator{\Chow}{CH}

\DeclareMathOperator{\codim}{codim}

\DeclareMathOperator{\Cyc}{Z}
\DeclareMathOperator{\Db}{D^b}

\DeclareMathOperator{\Dperf}{D^{perf}}

\DeclareMathOperator{\Hom}{Hom}
\DeclareMathOperator{\id}{id}
\DeclareMathOperator{\im}{im}

\DeclareMathOperator{\Kzero}{K_0}

\DeclareMathOperator{\Spc}{Spc}
\DeclareMathOperator{\stab}{\!-stab}
\DeclareMathOperator{\supp}{supp}

\setcounter{tocdepth}{2}

\begin{document}
\title{Relative tensor triangular Chow groups, singular varieties and localization}
\author{Sebastian Klein}
\date{}

\maketitle

\begin{abstract}
	We extend the scope of Balmer's tensor triangular Chow groups to compactly generated triangulated categories $\mathcal{K}$ that only admit an action by a compactly-rigidly generated tensor triangulated category $\mathcal{T}$ as opposed to having a compatible monoidal structure themselves. The additional flexibility allows us to recover the Chow groups of a possibly singular algebraic variety $X$ from the homotopy category of quasi-coherent injective sheaves on $X$. We are also able to construct localization sequences associated to restricting to an open subset of $\Spc(\mathcal{T}^c)$, the Balmer spectrum of the subcategory of compact objects~$\mathcal{T}^c \subset \mathcal{T}$. This should be viewed in analogy to the exact sequences for the cycle and Chow groups of an algebraic variety associated to the restriction to an open subset.
\end{abstract}

\setcounter{tocdepth}{1}
\tableofcontents

\section{Introduction}
Let $\mathcal{T}$ be a tensor triangulated category, i.e.\ a triangulated category with a compatible symmetric monoidal structure, and assume that $\mathcal{T}$ is essentially small. The article \cite{balmerchow} introduced, for $p \in \mathbb{Z}$ a concept of cycle groups $\Cyc^{\Delta}_p(\mathcal{T})$ and Chow groups $\Chow^{\Delta}_p(\mathcal{T})$ for these categories, defined in analogy to a $\mathrm{K}$-theoretic description of the usual cycle groups and Chow groups of an algebraic variety (from \cite{quillenhigher}). In \cite{kleinchow} it was shown that these tensor triangular Chow groups recover the usual Chow groups of a non-singular algebraic variety $X$ when $\mathcal{T} = \Dperf(X)$, the derived category of perfect complexes on $X$, and that they possess reasonable functoriality properties. Furthermore, in \cite{kleinintersection}, it is demonstrated that under specific circumstances, one can even define an intersection product on the groups $\Chow^{\Delta}_p(\mathcal{T})$.

The aim of the article at hand is to extend the flexibility of the approach in two directions: so far, we do not have a notion of tensor triangular Chow groups for triangulated categories that do admit a compatible symmetric monoidal structure but are not essentially small, e.g.\ the (unbounded) derived category of quasi-coherent sheaves $\mathrm{D}(X)$ of a noetherian scheme. Furthermore, we might ask whether we can replace a symmetric monoidal structure by something weaker. In order to address these issues, we consider tensor triangulated categories $\mathcal{T}$ which are compactly-rigidly generated (see Definition \ref{dfncompriggen}) and have an action on a triangulated category $\mathcal{K}$ (which need not have a monoidal structure) in the sense of \cite{stevenson}. The action of $\mathcal{T}$ on $\mathcal{K}$ gives rise to an associated support theory for objects of $\mathcal{K}$ which we use to define \emph{relative tensor triangular cycle groups} and \emph{relative tensor triangular Chow groups}
\[\Cyc^{\Delta}_p(\mathcal{T}, \mathcal{K}) \quad \text{and} \quad \Chow^{\Delta}_p(\mathcal{T}, \mathcal{K})\]
for $p \in \mathbb{Z}$ (see Definition \ref{defcycchow}). When we set $\mathcal{K} = \mathcal{T}$ and let $\mathcal{T}$ act via its own monoidal structure, the new definition recovers the old one one in the sense that $\Cyc^{\Delta}_p(\mathcal{T}, \mathcal{T}) = \Cyc^{\Delta}_p(\mathcal{T}^c) $ and $\Chow^{\Delta}_p(\mathcal{T}, \mathcal{T}) = \Chow^{\Delta}_p(\mathcal{T}^c)$, where $\mathcal{T}^c \subset{\mathcal{T}}$ denotes the subcategory of compact objects (see Proposition \ref{relativecompactcomparison}). In particular, the groups $\Chow^{\Delta}_p(\mathrm{D}(X), \mathrm{D}(X))$ recover the usual Chow groups for a non-singular algebraic variety $X$ (see Corollary \ref{correlagr}).

However, our newly gained flexibility also allows us to recover the cycle groups and Chow groups of a \emph{possibly singular} algebraic variety. In Section \ref{sectionkinj} we consider the action of the derived category $\mathrm{D}(X)$ of a noetherian scheme on the category $\mathrm{K}(\mathrm{Inj}(X))$, the homotopy category of the category of quasi-coherent injective sheaves on $X$ (see Proposition \ref{propdxactsonkinjx}). This enables us to prove the following result:
\begin{announcethm}[\ref{thmsingagreement}]
	Let $X$ be a separated scheme of finite type over a field. Endow $\Spc(\mathrm{D}(X)^c) = X$ with the opposite of the Krull codimension as a dimension function. Then for all $p \leq 0$, we have isomorphisms
	\begin{align*}
	\Cyc^{\Delta}_{p}(\mathrm{D}(X),\mathrm{K}(\mathrm{Inj}(X))) &\cong \Cyc_{-p}(X) \\
	\Chow^{\Delta}_{p}(\mathrm{D}(X),\mathrm{K}(\mathrm{Inj}(X))) &\cong \Chow_{-p}(X)~.
	\end{align*}
\end{announcethm}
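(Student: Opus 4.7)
The plan is to reduce to Quillen's Brown--Gersten--Quillen coniveau computation of $K_0$ for the $G$-theory of $X$, in three stages. First I would handle the setup: by a theorem of Krause, the compact objects of $\mathrm{K}(\mathrm{Inj}(X))$ are equivalent to $\Db(\mathrm{Coh}(X))$, while $\Spc(\mathrm{D}(X)^c) \cong X$ by Balmer--Thomason. I would check that the support theory induced on compact objects by the $\mathrm{D}(X)$-action of Proposition \ref{propdxactsonkinjx} coincides with the usual cohomological support of an object of $\Db(\mathrm{Coh}(X))$. Combined with the dimension function being the negative of the Krull codimension, this identifies, for $p \leq 0$, the subcategory $\mathcal{K}_{(p)} \subset \mathrm{K}(\mathrm{Inj}(X))^c$ used in Balmer's construction with the thick subcategory of $\Db(\mathrm{Coh}(X))$ consisting of complexes whose cohomology sheaves are supported in codimension $\geq -p$.

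Next, for the cycle-group claim, I would compute the $K_0$ of the Verdier quotient $\mathcal{K}_{(p)}/\mathcal{K}_{(p-1)}$. A dévissage in the spirit of Quillen's original treatment of $G$-theory identifies this $K_0$ with $\bigoplus_{x \in X^{(-p)}} \mathbb{Z}$: a class is represented by a coherent sheaf generically supported on a codimension-$(-p)$ integral subvariety, weighted by its length at the generic point. This recovers $\Cyc_{-p}(X)$.

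For the Chow-group claim, the group is cut out of the cycle group by the image of Balmer's boundary
\[ \partial : K_1(\mathcal{K}_{(p+1)}/\mathcal{K}_{(p)}) \longrightarrow K_0(\mathcal{K}_{(p)}/\mathcal{K}_{(p-1)}). \]
Applying the same dévissage to the source yields contributions $\bigoplus_{x \in X^{(-p-1)}} K_1(k(x)) = \bigoplus_x k(x)^{\times}$, and I would identify the action of $\partial$ on these generators with the classical divisor map $f \mapsto \mathrm{div}(f)$. This coincides with the $d_1$-differential in the BGQ coniveau spectral sequence, so the resulting quotient is $\Chow_{-p}(X)$.

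The main obstacle is the compatibility check in the setup stage: the support theory produced abstractly by Stevenson's action formalism on $\mathrm{K}(\mathrm{Inj}(X))^c$ must be matched with the naive cohomological support on $\Db(\mathrm{Coh}(X))$, which requires unpacking how $\mathrm{D}(X)$-tensor nullification interacts with the Krause equivalence. The localization/dévissage steps are otherwise standard, and because only $K_0$ and the image of a single $K_1$-boundary are involved, Gersten's conjecture is not needed---BGQ for coherent $G$-theory of a noetherian (possibly singular) scheme is enough.
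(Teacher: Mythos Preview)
Your plan is correct and matches the paper's route: the paper establishes the support comparison (Proposition~\ref{proposition-suppagrees}) and the identification $\mathrm{K}(\mathrm{Inj}(X))^c_{(p)} \simeq \Db(X)_{(p)}$ exactly as you outline, and then defers the remaining $K_0$/d\'evissage computation to \cite{kleinchow}*{Section 4}, which is precisely the BGQ argument you sketch.

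One point to tighten. The rational equivalence in Definition~\ref{dfnrelcycchowgroups} is \emph{not} phrased via a $K_1$-boundary; it is the purely $K_0$-level subgroup $j \circ q(\ker(i))$, where $i\colon K_0(\mathcal{K}^c_{(p)}) \to K_0(\mathcal{K}^c_{(p+1)})$ is induced by the inclusion. Your map $\partial$ is the BGQ $d_1$-differential, i.e.\ the composite of Quillen's localization boundary $K_1 \to K_0(\mathcal{K}^c_{(p)})$ with $q$. To equate $\mathrm{im}(\partial)$ with $j \circ q(\ker(i))$ you need $\ker(i)$ to equal the image of that localization boundary, which holds here because the filtration on $\Db(\mathrm{Coh}(X))$ comes from a Serre filtration on the abelian category $\mathrm{Coh}(X)$ and Quillen's localization theorem applies. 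That step is routine in this setting, but it is not built into the definition and is unavailable for an arbitrary triangulated $\mathcal{K}$, so you should say explicitly where it enters.
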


The last part of the article focuses on establishing localization sequences, in the following sense: the definitions of $\Cyc^{\Delta}_p(\mathcal{T}, \mathcal{K})$ and $\Chow^{\Delta}_p(\mathcal{T}, \mathcal{K})$ make use of Balmer's spectrum $\Spc(\mathcal{T}^c)$ of the essentially small tensor triangulated category $\mathcal{T}^c$ (see \cite{balmer2005spectrum}). If $U \subset \Spc(\mathcal{T}^c)$ is an open subset, there are associated Verdier quotients $\mathcal{T}_U$ of $\mathcal{T}$ and $\mathcal{K}_U$ of~$\mathcal{K}$. The triangulated category $\mathcal{T}_U$ remains compactly-rigidly generated and there is an induced action of $\mathcal{T}_U$ on $\mathcal{K}_U$ (see Lemma \ref{lmaresaction}). Furthermore, if $Z$ denotes the closed complement of $U$, we have a triangulated subcategory $\mathcal{K}_Z \subset \mathcal{K}$ to which the action of $\mathcal{T}$ restricts.  For example, if $X$ is a noetherian separated scheme, $U \subset X$ an affine open, $\mathcal{T} = D(X)$ and $\mathcal{K} = \mathrm{K}(\mathrm{Inj}(X))$, then $\Spc(\mathcal{T}^c) \cong X, \mathcal{T}_U = D(U)$ and $\mathcal{K}_U= \mathrm{K}(\mathrm{Inj}(U))$ (see Corollary \ref{corinjres}). In this case, the subcategory $\mathcal{K}_Z$ consists of all those complexes of $\mathrm{K}(\mathrm{Inj}(X))$ that have homological support contained in~$Z$. Our main result for this situation is the following:
\begin{announcethm}[\ref{thmlocseq}]
		Let $\mathcal{T}$ be a compactly-rigidly generated tensor triangulated category  acting on a triangulated category $\mathcal{K}$ such that the action satisfies the local-to-global principle. Let $U \subset \Spc(\mathcal{T}^c)$ be an open subset with closed complement $Z$, denote by $\mathcal{K}_Z \subset \mathcal{K}$ the full subcategory of those objects $a$ with $\supp(a) \subset Z$ and let $\mathcal{K}_U := \mathcal{K}/\mathcal{K}_Z$. Then the quotient functor $\mathcal{K} \to \mathcal{K}_U$ and the inclusion $\mathcal{K}_Z \to \mathcal{K}$ induce exact localization sequences
		\[0 \to \Cyc^{\Delta}_p(\mathcal{T},\mathcal{K}_Z) \xrightarrow{i_p} \Cyc^{\Delta}_p(\mathcal{T}, \mathcal{K}) \xrightarrow{l_p} \Cyc^{\Delta}_p(\mathcal{T}_U, \mathcal{K}_U) \to 0\]
		and
		\[\Chow^{\Delta}_p(\mathcal{T}, \mathcal{K}) \xrightarrow{\ell_p} \Chow^{\Delta}_p(\mathcal{T}_U, \mathcal{K}_U) \to 0\]
		for all $p \in \mathbb{Z}$. Furthermore, $i_P$ induces a map $\iota_p: \Chow^{\Delta}_p(\mathcal{T}, \mathcal{K}_Z) \to \Chow^{\Delta}_p(\mathcal{T}, \mathcal{K})$ such that $\im(\iota_p) \subset \ker(\ell_p)$.
\end{announcethm}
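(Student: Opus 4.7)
The plan is to analyze the support-dimension filtration of $\mathcal K$ and show that it decomposes compatibly into those of $\mathcal K_Z$ and $\mathcal K_U$, then extract the cycle-group sequence from induced short exact sequences on triangulated subquotients, and finally deduce the Chow-group statements by tracking how the rational-equivalence relations behave under the inclusion and the quotient.

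Write $\mathcal K^{(p)}$ for the full subcategory of objects of $\mathcal K$ whose support has dimension at most $p$ with respect to the $\mathcal T$-action, and similarly define $\mathcal K_Z^{(p)}$ and $\mathcal K_U^{(p)}$. Both the inclusion $\mathcal K_Z \hookrightarrow \mathcal K$ and the quotient $\mathcal K \to \mathcal K_U$ can only decrease supports, so they preserve these filtrations; the former does so as a fully faithful embedding with image $\mathcal K_Z^{(p)} = \mathcal K_Z \cap \mathcal K^{(p)}$. The technical heart of the argument is to establish, for each $p$, a short exact sequence of triangulated categories (up to idempotent completion)
\[
\mathcal K_Z^{(p)}/\mathcal K_Z^{(p-1)} \longrightarrow \mathcal K^{(p)}/\mathcal K^{(p-1)} \longrightarrow \mathcal K_U^{(p)}/\mathcal K_U^{(p-1)}.
\]
Full faithfulness of the left-hand map is formal from the fact that $\mathcal K_Z$ is thick and that the filtration is cut out by support. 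Essential surjectivity of the right-hand map requires lifting any object $a \in \mathcal K_U^{(p)}$ to some $\tilde a \in \mathcal K^{(p)}$, and this is where the local-to-global principle enters: it permits one to replace a naive lift by an object whose support is the closure of $\supp(a) \cap U$ inside $\Spc(\mathcal T^c)$ rather than all of $\Spc(\mathcal T^c)$.

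Once these short exact sequences are in hand, applying $K_0$ produces the cycle-group statement. Right exactness of $K_0$ on Verdier quotients gives surjectivity of $l_p$ and exactness in the middle, while the fact that $\mathcal K_Z^{(p)}$ is a thick subcategory of $\mathcal K^{(p)}$ whose intersection with $\mathcal K^{(p-1)}$ is exactly $\mathcal K_Z^{(p-1)}$ makes the left-hand map fully faithful on subquotients and thus yields injectivity of $i_p$. For the Chow groups, recall that $\Chow^{\Delta}_p$ is the cokernel of a boundary map out of $K_1$ of the next filtration step; since the essential surjectivity argument applies equally well at level $p+1$, the map $\ell_p$ is well-defined and inherits surjectivity from $l_p$. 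The map $\iota_p$ exists because $K_1$-classes in $\mathcal K_Z^{(p+1)}/\mathcal K_Z^{(p)}$ push forward to $K_1$-classes in $\mathcal K^{(p+1)}/\mathcal K^{(p)}$, and $\ell_p \circ \iota_p = 0$ follows from $l_p \circ i_p = 0$ at the cycle level.

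The principal obstacle is the essential surjectivity underlying the short exact sequence of subquotients: given $a \in \mathcal K_U^{(p)}$, one must find a lift $\tilde a \in \mathcal K$ whose own support satisfies the correct dimension bound away from $Z$. Without the local-to-global principle, an arbitrary lift could acquire extra support upon re-adjustment modulo $\mathcal K_Z$, and the correction by a $\mathcal K_Z$-object needed to push this support back down within the dimension bound need not exist. Verifying that the local-to-global principle is precisely what makes this d\'evissage work, uniformly in $p$ and in a way compatible with the $K_1$-level data used for the Chow groups, is the main piece of work.
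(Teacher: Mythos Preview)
Your outline has a real gap at the step where you claim injectivity of $i_p$. You write that full faithfulness of $\mathcal K_Z^{(p)}/\mathcal K_Z^{(p-1)} \hookrightarrow \mathcal K^{(p)}/\mathcal K^{(p-1)}$ ``yields injectivity of $i_p$'', but a fully faithful exact functor between (idempotent complete) triangulated categories need not induce an injection on $\Kzero$. From a Verdier localization sequence one only obtains, after idempotent completion, an exact sequence $K_1 \to \Kzero \to \Kzero \to \Kzero \to 0$, and the connecting map from $K_1$ of the quotient has no reason to vanish. A related issue affects your surjectivity argument: ``right exactness of $\Kzero$ on Verdier quotients'' gives $\Kzero(\mathcal B) \twoheadrightarrow \Kzero(\mathcal B/\mathcal A)$, but the cycle groups are $\Kzero$ of the \emph{idempotent completions}, and $\Kzero((\mathcal B/\mathcal A)^{\natural})$ can be strictly larger than $\Kzero(\mathcal B/\mathcal A)$.

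The paper sidesteps both difficulties by a different mechanism: it uses the local-to-global principle not to lift objects along the quotient, but to obtain a \emph{pointwise splitting}. Concretely, one has equivalences
\[
\left(\mathcal K_{(p)}/\mathcal K_{(p-1)}\right)^c \;\simeq\; \coprod_{x \in V_p} (\Gamma_x \mathcal K)^c,
\qquad
\left((\mathcal K_Z)_{(p)}/(\mathcal K_Z)_{(p-1)}\right)^c \;\simeq\; \coprod_{x \in V_p \cap Z} (\Gamma_x \mathcal K)^c,
\]
and similarly for $\mathcal K_U$ with index set $V_p \cap U$. Under these identifications the inclusion and the localization become the obvious split inclusion and projection on the factors, so after applying $\Kzero$ the sequence for $\Cyc^{\Delta}_p$ is a split short exact sequence of direct sums indexed by $V_p = (V_p \cap Z) \sqcup (V_p \cap U)$. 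This gives injectivity of $i_p$, surjectivity of $l_p$, and $\im(i_p) = \ker(l_p)$ all at once, without any appeal to higher $K$-theory or lifting arguments. Your d\'evissage-style approach is not unreasonable in spirit, but to make it go through you would need an independent reason for the connecting map to vanish, and that reason is precisely the pointwise decomposition.
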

The above exact sequences should be compared to the corresponding ones for cycle groups and Chow groups of algebraic varieties (see \cite{fulton} and Remark \ref{remvarexseq}). At the moment the author does not know whether we have $\im(\iota_p) = \ker(\ell_p)$ in this generality. However, we are able to give a criterion for when the map $\ell_p$ is an isomorphism:
\begin{announceprop}[\ref{propchowresiso}]
	Assume that $p > \dim(Z)$ and that $\mathcal{K}^c/\mathcal{K}^c_Z$ is idempotent complete. Then 
	\[\ell_p: \Chow^{\Delta}_p(\mathcal{T}, \mathcal{K}) \to \Chow^{\Delta}_p(\mathcal{T}_U, \mathcal{K}_U) \]
	is an isomorphism.
\end{announceprop}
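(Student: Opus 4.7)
The plan is to upgrade the surjectivity of $\ell_p$ established in Theorem~\ref{thmlocseq} to an isomorphism, by combining the dimensional vanishing forced by $p > \dim(Z)$ with the structural consequence of idempotent completeness for compact objects.

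First I would observe that $p > \dim(Z)$ forces $\Cyc^{\Delta}_p(\mathcal{T}, \mathcal{K}_Z) = 0$: any generator of this group is represented by a compact object of $\mathcal{K}_Z$ whose support has dimension exactly $p$, but every object of $\mathcal{K}_Z$ has support inside $Z$, of dimension at most $\dim(Z) < p$. Combined with the short exact cycle sequence from Theorem~\ref{thmlocseq}, this immediately shows that $l_p: \Cyc^{\Delta}_p(\mathcal{T}, \mathcal{K}) \to \Cyc^{\Delta}_p(\mathcal{T}_U, \mathcal{K}_U)$ is already an isomorphism on cycle groups.

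The next step is to compare rational equivalence on both sides. Suppose $\alpha \in \Cyc^{\Delta}_p(\mathcal{T}, \mathcal{K})$ satisfies $\ell_p([\alpha]) = 0$, so that $l_p(\alpha)$ is rationally trivial in $\Cyc^{\Delta}_p(\mathcal{T}_U, \mathcal{K}_U)$. Following Balmer's framework (adapted to the relative setting), this means $l_p(\alpha)$ is a boundary of some $K_1$-class on a relevant filtered subquotient of $(\mathcal{K}_U)^c$ by support dimension. To lift such a class to $\mathcal{K}^c$, I would invoke Neeman--Thomason: there is a fully faithful exact functor $\mathcal{K}^c/\mathcal{K}^c_Z \hookrightarrow (\mathcal{K}_U)^c$ which becomes an equivalence after idempotent completion of the source. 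The hypothesis that $\mathcal{K}^c/\mathcal{K}^c_Z$ is idempotent complete collapses this into an equivalence on the nose. Moreover, because $p > \dim(Z)$, every compact object with support in $Z$ lies below the relevant filtration level, so the support-dimension filtration on $(\mathcal{K}_U)^c$ pulls back to the one on $\mathcal{K}^c$ in all degrees $\geq p-1$; hence a $K_1$-representative of the rational equivalence downstairs can be chosen to come from a $K_1$-representative upstairs.

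Applying the lifted boundary produces a cycle $\alpha' \in \Cyc^{\Delta}_p(\mathcal{T}, \mathcal{K})$ that is rationally trivial and satisfies $l_p(\alpha') = l_p(\alpha)$; injectivity of $l_p$ from the first step then yields $\alpha = \alpha'$, so $[\alpha] = 0$ in $\Chow^{\Delta}_p(\mathcal{T}, \mathcal{K})$. The main obstacle is precisely the middle step: matching the $K_1$-level rational equivalence relations across the Verdier localization. Without idempotent completeness one only obtains a realization of $(\mathcal{K}_U)^c$ as a direct-summand completion of $\mathcal{K}^c/\mathcal{K}^c_Z$, and a boundary-defining class on an object not in the essential image would not literally descend from upstairs; the dimension hypothesis $p>\dim(Z)$ guarantees simultaneously that the $Z$-part of the filtration contributes nothing at levels $p-1$, $p$, $p+1$, so no additional correction from $\mathcal{K}_Z$ is needed.
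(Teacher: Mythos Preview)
Your overall strategy---show that $l_p$ is an isomorphism on cycles via the vanishing $\Cyc^{\Delta}_p(\mathcal{T},\mathcal{K}_Z)=0$, then show that the rational-equivalence subgroups correspond under $l_p$---is exactly the paper's. The gap is in your middle step. You describe the witness of rational triviality of $l_p(\alpha)$ as a ``$K_1$-class'' whose boundary is $l_p(\alpha)$. But in this paper (see Definition~\ref{defcycchow} and Definition~\ref{dfnrelcycchowgroups}) rational equivalence is \emph{not} defined via $K_1$ and a boundary map; it is defined purely at the $K_0$ level as the image $j\circ q(\ker(i))$, where $i\colon \Kzero(\mathcal{K}^c_{(p)}) \to \Kzero(\mathcal{K}^c_{(p+1)})$ is induced by inclusion. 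No higher $K$-theory of these triangulated subquotients is set up here, so as written your lifting argument invokes objects that are not available.

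The repair is to replace the $K_1$ language by the actual witness: a class $\beta_U \in \Kzero((\mathcal{K}_U)^c_{(p)})$ with $i_U(\beta_U)=0$ and $q_U^{\natural}(\beta_U)=l_p(\alpha)$. To lift $\beta_U$ to some $\beta \in \ker(i)\subset \Kzero(\mathcal{K}^c_{(p)})$ you need two ingredients. First, the maps $\pi_q\colon \Kzero(\mathcal{K}^c_{(q)}) \to \Kzero((\mathcal{K}_U)^c_{(q)})$ are surjective for $q=p,p+1$; this is where idempotent completeness enters, since $p>\dim(Z)$ forces $\mathcal{K}^c_Z \subset \mathcal{K}^c_{(q)}$ and then the hypothesis identifies $(\mathcal{K}_U)^c_{(q)}$ with $\mathcal{K}^c_{(q)}/\mathcal{K}^c_Z$ on the nose. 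Second, $\ker(\pi_p)$ and $\ker(\pi_{p+1})$ are both generated by classes of objects of $\mathcal{K}^c_Z$, and $i$ carries the former onto the latter; hence any lift of $\beta_U$ can be corrected by an element of $\ker(\pi_p)$ to land in $\ker(i)$. This is precisely the paper's argument, packaged as the identity $q_U^{\natural}(\ker(i_U)) = l_p\bigl(q^{\natural}(\ker(i))\bigr)$. Once you rewrite your middle paragraph in these terms, your proof coincides with the paper's.
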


\textbf{Acknowledgments:} The results in this paper are partially based on and contained in the author's Ph.D.\ thesis, which was written at Utrecht University and jointly supervised by Paul Balmer and Gunther Cornelissen. The author would like to thank both of them for their support. The members of the thesis committee also pointed out some corrections and suggested some improvements, for which the author is grateful. Furthermore, the author would like to thank Greg Stevenson and Pieter Belmans for helpful discussions. The author acknowledges the support of the European Union for the ERC grant No.\ 257004-HHNcdMir.

\section{Preliminaries}
\label{sectionprelim}

\subsection{Tensor triangular Chow groups for essentially small tensor triangulated categories.}
We first summarize the parts of the theory of Chow groups for essentially small tensor triangulated categories which are are relevant for us. 

\begin{dfn}
An essentially small triangulated category $\mathcal{C}$ is called \emph{tensor triangulated} if it comes equipped with a compatible symmetric monoidal structure $\otimes:  \mathcal{C} \times \mathcal{C} \to \mathcal{C}$, i.e.\ if the functor $\otimes$ is exact in both variables and the two isomorphisms
\[\Sigma a \otimes \Sigma b \cong \Sigma(a \otimes \Sigma b) \cong \Sigma^2(a \otimes b) \quad \text{and} \quad \Sigma a \otimes \Sigma b \cong \Sigma(\Sigma a \otimes b) \cong \Sigma^2(a \otimes b)\]
differ by a sign $-1$ at most. An essentially small tensor triangulated category is called \emph{rigid} if there is an exact functor $D: \mathcal{C}^{\mathrm{op}} \to \mathcal{C}$ and a natural isomorphism $\Hom_{\mathcal{C}}(a \otimes b, c) \cong \Hom_{\mathcal{C}}(b, D(a) \otimes c)$ for all objects $a,b,c \in \mathcal{C}$. The object $D(a)$ is called the \emph{dual} of $a$.
\label{dfnttcat}
\end{dfn}

Recall from \cite{balmer2005spectrum} that for such a category, we have a notion of \emph{spectrum}, a spectral topological space $\Spc(\mathcal{C})$ such that every object $a \in \mathcal{C}$ has a \emph{support} $\supp(a) \subset \Spc(\mathcal{C})$, a closed subset. In the case $\mathcal{C} = \Dperf(X)$, the derived category of perfect complexes on a quasi-compact, quasi-separated scheme $X$, one obtains $\Spc(\mathcal{C}) \cong X$ and $\supp(a^{\bullet}) = \mathrm{supph}(a^{\bullet})$ under this isomorphism, where $\mathrm{supph}(a^{\bullet})$ is the homological support of the complex of sheaves $a^{\bullet}$. If $\mathcal{C} = kG\stab$, the stable category of finitely generated left $kG$-modules for a finite group $G$ and a field $k$, we have $\Spc(\mathcal{C}) = \mathcal{V}_G(k)$, the projective support variety of $G$ over $k$, and $\supp(M) = \mathcal{V}_G(M)$ under this isomorphism, the variety of the module $M$.

\begin{dfn}[see \cite{balmerfiltrations}*{Definition 3.1}] 
	A \emph{dimension function} on $\mathcal{C}$ is a map 
	\[\dim: \Spc(\mathcal{C}) \to \mathbb{Z} \cup \lbrace \pm \infty \rbrace\]
	such that the following two conditions hold:
	\begin{enumerate}
		\item If $Q, P$ are points of $\Spc(\mathcal{C})$ such that $Q \in \overline{\lbrace P \rbrace}$, then~$\dim(Q) \leq \dim(P)$.
		\item If $Q \in \overline{\lbrace P \rbrace}$ and $\dim(Q) = \dim(P) \in \mathbb{Z}$, then~$Q = P$.
	\end{enumerate}
	For a subset $V \subset \Spc(\mathcal{C})$, we define $\dim(V) := \sup \lbrace \dim(P) | P \in V \rbrace$. For every $p \in \mathbb{Z} \cup \lbrace \pm \infty \rbrace$, we define the full subcategory
	\[\mathcal{C}_{(p)} := \lbrace a \in \mathcal{C} : \dim(\supp(a)) \leq p \rbrace ~.\]
\end{dfn}

\begin{ex}
	Two examples of dimension functions are given by the \emph{Krull dimension} $\dim_{\mathrm{Krull}}$ and the \emph{opposite of the Krull codimension} $-\codim_{\mathrm{Krull}}$, which are defined in the usual manner (see \cite{balmerfiltrations}*{Examples 3.2, 3.3}). Indeed, this is sensible since $\Spc(\mathcal{C})$ is always a spectral topological space, i.e.\ it is homeomorphic to the prime ideal spectrum of some (usually unknown, possibly horrible) commutative ring.
\end{ex}

We will now explain a notion of Chow groups for tensor triangulated categories that was introduced in \cite{balmerchow} and further studied in \cite{kleinchow}.
\begin{dfn}
	An additive category $\mathcal{A}$ is called \emph{idempotent complete} if all idempotent endomorphisms in $\mathcal{A}$ split. Every triangulated category $\mathcal{C}$ admits an exact embedding into its \emph{idempotent completion} $J: \mathcal{C} \hookrightarrow \mathcal{C}^{\natural}$, where $\mathcal{C}^{\natural}$ is an idempotent complete triangulated category such that every exact functor $\mathcal{C} \to \mathcal{D}$ with $\mathcal{D}$ triangulated and idempotent complete factors uniquely through $J$ (see \cite{balschlichidem}).
\end{dfn}

Fix a dimension function $\dim$ on $\mathcal{C}$ and consider the diagram
\[\xymatrix{
	\mathcal{C}_{(p)} \ar@{^{(}->}[r]^I \ar@{->>}[d]^Q& \mathcal{C}_{(p+1)} \\
	\mathcal{C}_{(p)}/\mathcal{C}_{(p-1)} \ar@{^{(}->}[r]^-J & (\mathcal{C}_{(p)}/\mathcal{C}_{(p-1)})^{\natural}
}
\]
where $I,J$ denote the obvious embeddings and $Q$ is the Verdier quotient functor. After applying $\Kzero$ we get a diagram
\[
\xymatrix{
	\Kzero(\mathcal{C}_{(p)}) \ar[r]^i \ar@{->>}[d]^q& \Kzero(\mathcal{C}_{(p+1)}) \\
	\Kzero(\mathcal{C}_{(p)}/\mathcal{C}_{(p-1)}) \ar@{^{(}->}[r]^-{j} & \Kzero\left( (\mathcal{C}_{(p)}/\mathcal{C}_{(p-1)})^{\natural} \right) 
}
\]
where the lowercase maps are induced by the uppercase functors.

\begin{dfn}[see \cite{balmerchow}*{Definitions 8 and 10}]
	The \emph{$p$-dimensional cycle group of $\mathcal{C}$} is defined as
	\[\Cyc^{\Delta}_{p}(\mathcal{C}) := \Kzero\left( (\mathcal{C}_{(p)}/\mathcal{C}_{(p-1)} )^{\natural}\right)~.\]
	The \emph{$p$-dimensional Chow group of $\mathcal{C}$} is defined as
	\[\Chow^{\Delta}_{p}(\mathcal{C}) := \Cyc^{\Delta}_{p}(\mathcal{C}) / j \circ q (\ker(i))~.\]
	\label{defcycchow}
\end{dfn}

Let us recall from \cite{kleinchow} the following theorem that justifies the name "Chow groups":
\begin{thm}[see \cite{kleinchow}*{Theorem 3.2.6}]
	Let $X$ be a separated, non-singular scheme of finite type over a field and assume that the tensor triangulated category $\Dperf(X)$ is equipped with the dimension function~$-\codim_{\mathrm{Krull}}$. Then there are isomorphisms
	\[\Cyc^{\Delta}_{p}\left(\Dperf(X)\right) \cong Z^{-p}(X) \qquad \text{and} \qquad \Chow^{\Delta}_{p}\left(\Dperf(X)\right) \cong \Chow^{-p}(X) \]
	for all $p \in \mathbb{Z}$.
	\label{agreementthm}
\end{thm}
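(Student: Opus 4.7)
The plan is to identify the filtration from Balmer's construction with the classical coniveau filtration, then reduce the computation of cycle and Chow groups to Quillen's K-theory of regular local rings. First, I would use Balmer's result that $\Spc(\Dperf(X)) \cong X$ as topological spaces with $\supp(a^\bullet) = \mathrm{supph}(a^\bullet)$ under this identification. Under the dimension function $-\codim_{\mathrm{Krull}}$, the subcategory $\Dperf(X)_{(p)}$ consists precisely of those perfect complexes whose homological support has codimension $\geq -p$ in $X$. Since $X$ is smooth, we may identify $\Dperf(X) \simeq \Db(\mathrm{Coh}(X))$, so the filtration pieces become the familiar coniveau-filtered bounded derived categories of coherent sheaves.

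Next, for the cycle groups, I would set $q := -p$ and compare the Verdier quotient $\mathcal{C}_{(p)}/\mathcal{C}_{(p-1)}$ (after idempotent completion) with the corresponding Serre quotient of abelian categories, using that the heart structure is well-behaved under the coniveau filtration (this passage is standard for coherent sheaves on a noetherian scheme, and is where idempotent completion becomes relevant). Then Quillen's devissage applied to this Serre quotient gives
\[K_0\!\left( (\mathcal{C}_{(p)}/\mathcal{C}_{(p-1)})^{\natural}\right) \cong \bigoplus_{x \in X^{(q)}} K_0\bigl(\mathrm{fl}\,\mathcal{O}_{X,x}\bigr),\]
where $\mathrm{fl}\,\mathcal{O}_{X,x}$ denotes finite-length modules. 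Smoothness of $X$ makes $\mathcal{O}_{X,x}$ regular local of dimension $q$, so $K_0(\mathrm{fl}\,\mathcal{O}_{X,x}) \cong \mathbb{Z}$ with generator the residue field class. This yields the desired isomorphism $\Cyc^{\Delta}_p(\Dperf(X)) \cong Z^{-p}(X)$.

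For the Chow groups, the task is to identify $j\circ q(\ker(i))$ with Fulton's rational equivalence. The easy direction is to show that divisors of rational functions on codimension $(q-1)$ integral subvarieties $W \subset X$ do lie in $\ker(i)$: for $f \in k(W)^{\times}$ the class $[\mathcal{O}_{\mathrm{div}_0 f}] - [\mathcal{O}_{\mathrm{div}_\infty f}]$ vanishes in $K_0(\mathcal{C}_{(p+1)})$ because both classes become $[\mathcal{O}_W]$ there via the short exact sequences defining the two divisors. The harder direction is to show every element of $\ker(i)$ arises this way; this I would obtain from the K-theory localization long exact sequence
\[K_1(\mathcal{C}_{(p+1)}/\mathcal{C}_{(p)}) \xrightarrow{\partial} K_0(\mathcal{C}_{(p)}) \xrightarrow{i} K_0(\mathcal{C}_{(p+1)}),\]
combined with a second devissage computation identifying $K_1(\mathcal{C}_{(p+1)}/\mathcal{C}_{(p)}) \cong \bigoplus_{x \in X^{(q-1)}}K_1(k(x))$ and a check that $j\circ q \circ \partial$ agrees with the divisor map $K_1(k(x)) \to \bigoplus_{y \in X^{(q)}} \mathbb{Z}$.

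The main obstacle is precisely this last step: the K-theoretic localization sequence only guarantees that $\ker(i) = \mathrm{im}(\partial)$ up to possibly subtle idempotent-completion issues, and the explicit identification of $\partial$ with the tame symbol / divisor map requires tracing through Quillen's resolution theorem for the local rings $\mathcal{O}_{X,x}$ at codimension $(q-1)$ points and comparing with the classical tame symbol. Once this compatibility is established, the isomorphism $\Chow^{\Delta}_p(\Dperf(X)) \cong \Chow^{-p}(X)$ follows immediately.
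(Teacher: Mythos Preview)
The present paper does not prove this theorem; it is stated as a recall from the author's earlier work \cite{kleinchow}*{Theorem 3.2.6} and used as input. So there is no in-paper proof to compare against beyond the citation.

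That said, your outline is the standard and correct route, and is presumably what is carried out in \cite{kleinchow}: pass from $\Dperf(X)$ to $\Db(\mathrm{Coh}(X))$ via regularity, identify Balmer's support filtration with the coniveau filtration on coherent sheaves, apply Quillen's d\'evissage and localization theorems for abelian categories (as in \cite{quillenhigher}), and match the boundary map with the divisor/tame-symbol map. One small clarification on your stated obstacle: once you have moved to the abelian setting via the Serre quotients $\mathrm{Coh}^{\,\geq q}(X)/\mathrm{Coh}^{\,\geq q+1}(X)$, Quillen's localization long exact sequence gives $\ker(i)=\im(\partial)$ on the nose, with no idempotent-completion caveat. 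The idempotent completion enters only at the earlier step of identifying the triangulated Verdier quotient $(\mathcal{C}_{(p)}/\mathcal{C}_{(p-1)})^{\natural}$ with $\Db$ of the Serre quotient; that identification is where regularity and the work of \cite{balschlichidem} are used, and once it is in place the $K$-theoretic part proceeds without further subtlety.
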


\subsection{Actions of tensor triangulated categories}
We want to extend the scope of tensor triangular cycle groups and Chow groups to categories which need not be essentially small or even have a monoidal structure. Here, we lay the foundations for Section \ref{sectionrelchow}.
\begin{dfn}
	A triangulated category $\mathcal{T}$ is called a \emph{compactly-rigidly generated tensor triangulated category} if
	\begin{enumerate}[label=(\roman*)]
		\item \emph{$\mathcal{T}$ is a compactly generated triangulated category.} We implicitly assume here that $\mathcal{T}$ has set-indexed coproducts. Note that this implies that $\mathcal{T}$ is not essentially small.
		\item \emph{$\mathcal{T}$ is equipped with a compatible closed symmetric monoidal structure
			\[\otimes: \mathcal{T} \times \mathcal{T} \to \mathcal{T}\]
		with unit object $\mathbb{I}$}. Here, a symmetric monoidal structure on $\mathcal{T}$ is \emph{closed} if for all objects $a \in \mathcal{T}$ the functor $a \otimes -$ has a right adjoint $\underline{\mathrm{hom}}(a,-)$.
		\item \emph{$\mathbb{I}$ is compact and all compact objects of $\mathcal{T}$ are rigid.} Let $\mathcal{T}^c \subset \mathcal{T}$ denote the full subcategory of compact objects of $\mathcal{T}$. Then we require that $\mathbb{I} \in \mathcal{T}^c$ and that all objects $a$ of $\mathcal{T}^c$ are rigid, i.e.\ for every object $b \in \mathcal{T}$ the natural map
		\[\underline{\circ}: \underline{\mathrm{hom}}(a,\mathbb{I}) \otimes b \cong \underline{\mathrm{hom}}(a,\mathbb{I}) \otimes  \underline{\mathrm{hom}}(\mathbb{I},b) \to \underline{\mathrm{hom}}(a,b)~,\]
		is an isomorphism.
	\end{enumerate}
	\label{dfncompriggen}
\end{dfn}

\begin{rem}[See \cite{balfavibig}*{Hypotheses 1.1}]
Note that with this definition, $\mathcal{T}$ is not essentially small since it has set-indexed coproducts. Hence it cannot be a tensor triangulated category in the sense of Definition \ref{dfnttcat}. However, the triangulated subcategory of compact objects $\mathcal{T}^c \subset \mathcal{T}$ is of this type: our assumptions guarantee that the full subcategories of compact and rigid objects coincide (see \cite{hovpalmstrick}*{Theorem 2.1.3(d)}) and that the tensor product restricts to $\mathcal{T}^c$. Furthermore $\mathcal{T}^c$ is rigid: the functor $D$ of Definition \ref{dfnttcat} is given by $\underline{\mathrm{hom}}(-,\mathbb{I})$.
\end{rem}

\begin{conv}
	Throughout, we assume that $\mathcal{T}$ is a compactly-rigidly generated tensor triangulated category that satisfies the following conditions:
	\begin{enumerate}[label=(\roman*)]
		\item \emph{The essentially small tensor triangulated category $\mathcal{T}^c$ is equipped with a dimension function $\dim$ and $\Spc(\mathcal{T}^c)$ is a noetherian topological space}.
		\item \emph{$\mathcal{T}$ acts on a (fixed) compactly generated triangulated category $\mathcal{K}$ via an action $\ast$ in the sense of~\cite{stevenson}.} 
	\end{enumerate}
	\label{convbigcond}
\end{conv}

Let us briefly recall from \cite{stevenson}*{Definition 3.2} what it means for $\mathcal{T}$ to have an action $\ast$ on $\mathcal{K}$.\index{Action of a tensor triangulated category}
We are given a biexact bifunctor
\[\ast: \mathcal{T} \times \mathcal{K} \to \mathcal{K}\]
that commutes with coproducts in both variables, whenever they exist. Furthermore we are given natural isomorphisms
\begin{align*}
\alpha_{X,Y,A} &: (X \otimes Y) \ast A \overset{\sim}{\longrightarrow} X \ast (Y \ast A)\\
l_A &: \mathbb{I} \ast A \overset{\sim}{\longrightarrow} A
\end{align*}
for all objects $X,Y \in \mathcal{T}, A \in \mathcal{K}$. These natural isomorphisms should satisfy a list of natural coherence conditions which we omit here and refer the reader to \cite{stevenson}*{Definition 3.2}. 

\begin{rem}
	The assumption on the noetherianity of $\Spc(\mathcal{T}^c)$ simplifies some aspects of the sequel. For example, we do not need to distinguish between \emph{Thomason subsets} of $\Spc(\mathcal{T}^c)$ (unions of closed subsets with quasi-compact complements) and specialization-closed subsets. Furthermore, all points of $\Spc(\mathcal{T}^c)$ are \emph{visible} under this hypothesis, meaning there exist for each point $x \in \Spc(\mathcal{T}^c)$ Thomason subsets $V,W$ such that $\lbrace x \rbrace = V \cap W$ (see \cite{stevenson}*{Section 5}).
\end{rem}

\begin{rem}
	With this definition, a compactly-rigidly generated tensor triangulated category $\mathcal{T}$ as in Definition \ref{dfncompriggen} has an action on itself via $\otimes$. The natural isomorphisms $\alpha_{X,Y,A}, l_{A}$ are then given by the associator and unitor of the symmetric monoidal structure on $\mathcal{T}$ and one checks that all the required coherence conditions hold as the coherence conditions for the monoidal structure on $\mathcal{T}$ are satsified and the bifunctor $\otimes$ is compatible with the triangulated structure on $\mathcal{T}$. The functor $\otimes$ always preserves coproducts in both variables for any closed symmetric monoidal structure on any category, since the functor $a \otimes -$ has a right adjoint for all objects $a \in \mathcal{T}$ (see e.g.\ \cite{hovpalmstrick}*{Remark A.2.2}).
	\label{remselfaction}
\end{rem}

\begin{dfn}
	For a triangulated category $\mathcal{C}$, a triangulated subcategory $\mathcal{L} \subset \mathcal{C}$ is called \emph{localizing} if it is closed under taking set-indexed coproducts (in $\mathcal{C}$). A thick triangulated subcategory $\mathcal{L} \subset \mathcal{C}$ is called \emph{Bousfield} if the Verdier quotient functor $\mathcal{T} \to \mathcal{T}/\mathcal{S}$ has a right-adjoint. A Bousfield subcategory $\mathcal{L} \subset \mathcal{C}$ is called \emph{smashing} if the right-adjoint of the Verdier quotient functor $\mathcal{T} \to \mathcal{T}/\mathcal{S}$ preserves set-indexed coproducts.
\end{dfn}

Let us summarize some well-known facts about localizing, Bousfield and smashing subcategories: a Bousfield subcategory is necessarily localizing (see \cite{neemantc}*{Remark 9.1.17}). If $\mathcal{L} \subset \mathcal{C}$ is compactly generated, then the Brown representability theorem implies that the inclusion functor has a right-adjoint, which is equivalent to $\mathcal{L}$ being a Bousfield subcategory (see \cite{neemantc}*{Proposition 9.1.18}). If $\rho$ denotes the right-adjoint of the Verdier quotient $Q_{\mathcal{L}}: \mathcal{C} \to \mathcal{C}/\mathcal{L}$, the functor $L_{\mathcal{L}} := \rho \circ Q$ is called the \emph{(Bousfield) localization functor associated to $\mathcal{L}$} and it induces an equivalence~$\mathcal{C}/\mathcal{L} \xrightarrow{\sim} \im(L)$. Given a Bousfield subcategory $\mathcal{L}$, every object $a \in \mathcal{C}$ fits into a distinguished  \emph{localization triangle}
\[\Gamma_{\mathcal{L}}(a) \to a \to L_{\mathcal{L}} (a) \to \Sigma \Gamma_{\mathcal{L}}(a)\]
where the \emph{acyclization functor} $\Gamma_{\mathcal{L}}$ is given as the composition of the inclusion $\mathcal{L} \hookrightarrow \mathcal{C}$ and its right-adjoint. This triangle is unique up to unique isomorphism among all triangles of the form
\[x \to a \to y \to \Sigma x\]
with $x \in \mathcal{L}$ and $y \in {^{\perp}\mathcal{L}} := \lbrace b \in \mathcal{C} : \Hom_{\mathcal{C}}(l,b) = 0 ~\forall l \in \mathcal{L} \rbrace$. If $\mathcal{T}$ is as in Definition \ref{dfncompriggen}, and $\mathcal{L} \subset \mathcal{T}$ is a smashing subcategory, it can be shown that the localization functor $L_{\mathcal{L}}$ is given by $L_{\mathcal{L}}(\mathbb{I}) \otimes -$ and that $\Gamma_{\mathcal{L}}$ is given as~$\Gamma_{\mathcal{L}}(\mathbb{I}) \otimes -$. Furthermore, if $\mathcal{J} \subset \mathcal{T}^c$ is a thick triangulated subcategory such that $\mathcal{T}^c \otimes \mathcal{J} = \mathcal{J}$ (i.e.\ $\mathcal{J}$ is a \emph{thick tensor ideal}) then $\langle \mathcal{J} \rangle$, the smallest localizing subcategory of $\mathcal{T}$ containing $\mathcal{J}$, is a smashing subcategory.

Following \cite{balfavibig}, we can assign to every object $a \in \mathcal{T}$ a support~$\supp(a) \subset \Spc(\mathcal{T}^c)$: given a specialization-closed subset $V \subset \Spc(\mathcal{T}^c)$, we have a distinguished triangle
\[\Gamma_V (\mathbb{I}) \to \mathbb{I} \to  L_V (\mathbb{I}) \to \Sigma \Gamma_V (\mathbb{I}) \]
where $\Gamma_V$ and $L_V$\label{locacycdefpage} are the acyclization and localization functors associated to the smashing ideal that is generated by the thick $\otimes$-ideal of objects in $\mathcal{T}^c$ with support contained in~$V$.
For objects $a \in \mathcal{K}$, we set $\Gamma_V a := \Gamma_V (\mathbb{I}) \ast a$ and~$L_V a := L_V (\mathbb{I}) \ast a$. By \cite{stevenson}*{Lemma 4.4} $L_V(\mathbb{I}) \ast -$ is a Bousfield localization functor on $\mathcal{K}$, associated to the Bousfield subcategory $\Gamma_V (\mathbb{I}) \ast \mathcal{K}$, and for each object $a \in \mathcal{K}$ we obtain localization triangles
\[\Gamma_V a \to a \to L_V a \to \Sigma \Gamma_V a~.\]
Since the functor $L_V(\mathbb{I}) \ast -$ preserves coproducts by our assumptions on the action $\ast$, it follows from \cite{krauseloc}*{Proposition 5.5.1} that the subcategory $\Gamma_V (\mathbb{I}) \ast \mathcal{K}$ is even smashing.

For a point $x \in \Spc(\mathcal{T}^c)$ the subsets $\overline{\lbrace x \rbrace}$ and $Y_x := \lbrace P \in \Spc(\mathcal{T}^c) : x \notin \overline{\lbrace P \rbrace} \rbrace$\label{Yxdefpage} are special\-ization-closed and so we define the ``residue object'' $\Gamma_x \mathbb{I} \in \mathcal{T}$ as $\Gamma_{\overline{\lbrace x \rbrace}} L_{Y_x} (\mathbb{I})$. For $a \in \mathcal{T}$, we now define the \emph{support of an object $a \in \mathcal{T}$} as
\[\supp(a) := \lbrace x \in \Spc(\mathcal{T}^c) | \Gamma_x \mathbb{I} \otimes a \neq 0 \rbrace.\] 
In \cite{stevenson}, the same residue objects are used to define supports for objects of $\mathcal{K}$. For an object $b \in \mathcal{K}$, set $\Gamma_x b := \Gamma_x \mathbb{I} \ast b$\label{Gammaxdefpage}, then we define the \emph{support of $b$} as
\[\supp(b) := \lbrace x \in \Spc(\mathcal{T}^c) | \Gamma_x b \neq 0 \rbrace.\] 

This notion of support gives us a way to describe certain subcategories of $\mathcal{K}$. A triangulated subcategory $\mathcal{M} \subset \mathcal{K}$ is called \emph{localizing $\mathcal{T}$-submodule} if it is a localizing subcategory such that $\mathcal{T} \ast \mathcal{M} \subset \mathcal{M}$. We obtain order-preserving maps
\[
	\xymatrix@R=0.5em{
		\lbrace \text{subsets of } \Spc(\mathcal{T}^c) \rbrace \ar@<0.5ex>[r]^-{\tau_{\mathcal{K}}} &  \lbrace \text{localizing $\mathcal{T}$-submodules of } \mathcal{K}\rbrace  \ar@<0.5ex>[l]^-{\sigma_{\mathcal{K}}}\\
		 S \ar@{|->}[r]& \lbrace t \in \mathcal{K}: \supp(t) \subset S \rbrace \\
	\bigcup_{t \in \mathcal{M}} \supp(t) & \mathcal{M} \ar@{|->}[l]
	}
\]
where the ordering on both sides is given by inclusion (see \cite{stevenson}*{Definition 5.9}).

We record the following properties of the support that will be very useful for the sequel.
\begin{prop}[cf. \cite{stevenson}*{Proposition 5.7}]
	Let $V \subset \Spc(\mathcal{T}^c)$ be a subset closed under specialization and $a$ be an object of $\mathcal{K}$. Then
	\[\supp(\Gamma_V  a) = \supp(a) \cap V\]
	and
	\[\supp(L_V a) = \supp(a) \cap (\Spc(\mathcal{T}^c) \setminus V)~.\]
	\label{propsuppprops}
\end{prop}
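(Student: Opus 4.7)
The plan is to unravel the definitions and reduce both assertions to identities inside $\mathcal{T}$ of the form $\Gamma_x \mathbb{I} \otimes \Gamma_V \mathbb{I}$ and $\Gamma_x \mathbb{I} \otimes L_V \mathbb{I}$. Since $\Gamma_V a = \Gamma_V \mathbb{I} \ast a$, $L_V a = L_V \mathbb{I} \ast a$, and $\Gamma_x b = \Gamma_x \mathbb{I} \ast b$ by definition, the associator of the action together with the description of $\Gamma_x \mathbb{I} = \Gamma_{\overline{\{x\}}} \mathbb{I} \otimes L_{Y_x} \mathbb{I}$ yields
\[
\Gamma_x (\Gamma_V a) \cong \bigl( \Gamma_{\overline{\{x\}}} \mathbb{I} \otimes L_{Y_x} \mathbb{I} \otimes \Gamma_V \mathbb{I}\bigr) \ast a, \quad \Gamma_x (L_V a) \cong \bigl( \Gamma_{\overline{\{x\}}} \mathbb{I} \otimes L_{Y_x} \mathbb{I} \otimes L_V \mathbb{I}\bigr) \ast a.
\]
Thus the whole question boils down to computing these tensor products in $\mathcal{T}$, point by point in $V$ versus its complement.

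The key algebraic lemmas in $\mathcal{T}$ that I would establish first are: (A) $\Gamma_W \mathbb{I} \otimes L_W \mathbb{I} = 0$ for any specialization-closed $W$, which is just the composition $L_W \circ \Gamma_W = 0$; and (B) if $V \subset W$ are both specialization-closed, then $\Gamma_V \mathbb{I} \otimes \Gamma_W \mathbb{I} \cong \Gamma_V \mathbb{I}$ and $L_W \mathbb{I} \otimes L_V \mathbb{I} \cong L_W \mathbb{I}$. For the first identity in (B), I would tensor the triangle $\Gamma_W \mathbb{I} \to \mathbb{I} \to L_W \mathbb{I}$ with $\Gamma_V \mathbb{I}$; since $V \subset W$ implies $\Gamma_V \mathbb{I}$ lies in the smashing ideal generated by compacts supported in $W$, the term $L_W \mathbb{I} \otimes \Gamma_V \mathbb{I}$ vanishes by (A), forcing an isomorphism on the other two terms. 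The second identity in (B) is symmetric, obtained by tensoring the triangle for $V$ with $L_W \mathbb{I}$.

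With these identities, I split into two cases for each claim. For $\Gamma_V a$: if $x \in V$, then $\overline{\{x\}} \subset V$, so by (B) we get $\Gamma_{\overline{\{x\}}} \mathbb{I} \otimes \Gamma_V \mathbb{I} \cong \Gamma_{\overline{\{x\}}} \mathbb{I}$, hence $\Gamma_x \mathbb{I} \otimes \Gamma_V \mathbb{I} \cong \Gamma_x \mathbb{I}$ and $\Gamma_x(\Gamma_V a) \cong \Gamma_x a$. If instead $x \notin V$, the specialization-closedness of $V$ gives $V \subset Y_x$, so (A) and (B) combine to force $L_{Y_x} \mathbb{I} \otimes \Gamma_V \mathbb{I} = 0$ and hence $\Gamma_x(\Gamma_V a) = 0$. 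Together these yield $\supp(\Gamma_V a) = \supp(a) \cap V$. For $L_V a$: when $x \in V$ one has $\Gamma_{\overline{\{x\}}} \mathbb{I} \otimes L_V \mathbb{I} = 0$ by (A)/(B), so $\Gamma_x L_V a = 0$; when $x \notin V$ the inclusion $V \subset Y_x$ and (B) give $L_{Y_x} \mathbb{I} \otimes L_V \mathbb{I} \cong L_{Y_x} \mathbb{I}$, so $\Gamma_x \mathbb{I} \otimes L_V \mathbb{I} \cong \Gamma_x \mathbb{I}$ and $\Gamma_x(L_V a) \cong \Gamma_x a$. This yields the second equality.

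The main obstacle I anticipate is not logical but book-keeping: carefully justifying identity (B) from the formalism of \cite{balfavibig}, in particular the inclusion of smashing tensor ideals $\mathcal{T}_V \subset \mathcal{T}_W$ corresponding to $V \subset W$ and the resulting compatibility $L_W \Gamma_V = 0$. Once (A) and (B) are in hand, the case analysis is essentially mechanical and matches the structure of \cite{stevenson}*{Proposition 5.7}, from which the statement is adapted.
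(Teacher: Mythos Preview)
Your argument is correct. The paper does not actually supply a proof of this proposition: it is stated with a citation to \cite{stevenson}*{Proposition 5.7}, and the subsequent remark explains that the visibility hypothesis in Stevenson's formulation is automatic here because $\Spc(\mathcal{T}^c)$ is assumed noetherian. What you have written is essentially a self-contained reconstruction of Stevenson's proof via the Balmer--Favi tensor-idempotent calculus: the identities you label (A) and (B) are exactly the standard compatibilities $\Gamma_W L_W = 0$, $\Gamma_V \Gamma_W \cong \Gamma_V$ and $L_W L_V \cong L_W$ for $V \subset W$ (see \cite{balfavibig}), and your case split according to whether $x \in V$ or $x \notin V$ (using $\overline{\{x\}} \subset V$ in the first case and $V \subset Y_x$ in the second) is precisely the mechanism behind \cite{stevenson}*{Proposition 5.7}. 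So there is no genuine difference in approach---you have simply unpacked the cited reference rather than invoking it.
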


\begin{rem}
	In \cite{stevenson}*{Proposition 5.7}, Proposition \ref{propsuppprops} is proved for those specialization-closed subsets $V$ which are contained in the subset $\mathrm{Vis}(\mathcal{T}^c) \subset \Spc(\mathcal{T}^c)$ of so-called \emph{visible} points of the spectrum. The set $\mathrm{Vis}(\mathcal{T}^c)$ coincides with $\Spc(\mathcal{T})$ in our case, since we assumed the latter space to be noetherian.
\end{rem}

\begin{dfn}[cf. \cite{stevenson}*{Definition 6.1}] \index{Local-to-global principle}
	We say that the action $\ast$ of $\mathcal{T}$ on $\mathcal{K}$ satisfies the \emph{local-to-global principle} if for each $a$ in $\mathcal{K}$
	\[\langle a \rangle_{\ast} = \langle \Gamma_x a | x \in \Spc(\mathcal{T}^c) \rangle_{\ast}\]
	where for a collection of objects $S \subset \mathcal{K}$ we denote by $\langle S \rangle_{\ast}$ the smallest localizing $\mathcal{T}$-submodule of $\mathcal{K}$ containing~$S$.
	\label{locglobdef}
\end{dfn}
\begin{rem}
	The local-to-global principle holds very often, e.g.\ when $\mathcal{T}$ arises as the homotopy category of a monoidal model category (cf. \cite{stevenson}*{Proposition 6.8}). If the local-to-global principle holds, it implies the useful consequence that the relative support detects the vanishing of an object: if $\supp(a) = \emptyset$ then
	\[\langle a \rangle_{\ast} = \langle \Gamma_x a | x \in \Spc(\mathcal{T}^c) \rangle_{\ast} = \langle 0 \rangle_{\ast} = 0\]
	which implies that~$a = 0$.
	\label{locglobvanishing}
\end{rem}

For $p \in \mathbb{Z}$, let 
\[V_{\leq p} := \lbrace x \in \Spc(\mathcal{T}^c) | \dim(x) \leq p \rbrace, \quad V_p := \lbrace x \in \Spc(\mathcal{T}^c) | \dim(x) = p \rbrace \label{Vleqpdefpage}\] 
and set~$\Gamma_p A := \Gamma_{V_{\leq p}} L_{V_{\leq p-1}} A$\label{Gammapdefpage}. Let us state a decomposition theorem analogous to \cite{balmerfiltrations}.

\begin{notn}[cf.\ \cite{stevenfilt}*{Notation 3.6}]
	Let $\mathcal{L}_i$ be a collection of localizing subcategories of $\mathcal{K}$, indexed by a set $I$. Then $\prod_{i \in I} \mathcal{L}_i$ denotes the subcategory of $\mathcal{K}$ whose objects are coproducts of the objects of $\mathcal{L}_i$, where the morphisms and the triangulated structure are defined componentwise with respect to $I$.
	\label{notnprodcat}
\end{notn}

\begin{prop}[cf. \cite{stevenfilt}*{Lemma 4.3}] \index{Decomposition theorem}
	Suppose that the action of $\mathcal{T}$ on $\mathcal{K}$ satisfies the local-to-global principle and let $p \in \mathbb{Z}$. There is an equality of subcategories
	\[\Gamma_p \mathcal{K} = \tau_{\mathcal{K}}(V_p) = \prod_{x \in V_p} \Gamma_x \mathcal{K}\]
	where $\Gamma_x \mathcal{K}$ denotes the essential image of the functor $\Gamma_x(\mathbb{I}) \ast -$.
	\qed
	\label{stevensondecomposition}
\end{prop}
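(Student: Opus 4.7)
The plan is to verify the two claimed equalities in turn, reducing each to support calculations and the local-to-global vanishing criterion of Remark \ref{locglobvanishing}.

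For the first equality $\Gamma_p \mathcal{K} = \tau_{\mathcal{K}}(V_p)$, axiom (1) of a dimension function makes both $V_{\leq p-1}$ and $V_{\leq p}$ specialization-closed, so Proposition \ref{propsuppprops} applies to each. Two applications yield
\[ \supp(\Gamma_p A) = \supp(A) \cap V_{\leq p} \cap (\Spc(\mathcal{T}^c) \setminus V_{\leq p-1}) \subseteq V_p, \]
which gives $\Gamma_p \mathcal{K} \subseteq \tau_{\mathcal{K}}(V_p)$. Conversely, if $\supp(A) \subseteq V_p$, the objects $\Gamma_{V_{\leq p-1}} A$ and $L_{V_{\leq p}} L_{V_{\leq p-1}} A$ both have empty support and hence vanish by the local-to-global vanishing criterion. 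The two associated localization triangles then collapse to $A \cong L_{V_{\leq p-1}} A \cong \Gamma_{V_{\leq p}} L_{V_{\leq p-1}} A = \Gamma_p A$.

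For the inclusion $\prod_{x \in V_p} \Gamma_x \mathcal{K} \subseteq \tau_{\mathcal{K}}(V_p)$, Proposition \ref{propsuppprops} yields $\supp(\Gamma_x a) \subseteq \overline{\{x\}} \cap (\Spc(\mathcal{T}^c) \setminus Y_x)$ for each $x \in V_p$. A point $P$ in this intersection lies in $\overline{\{x\}}$ and satisfies $x \in \overline{\{P\}}$, so axiom (1) of the dimension function forces $\dim(P) = \dim(x) = p \in \mathbb{Z}$, and axiom (2) then forces $P = x$. Hence $\supp(\Gamma_x a) \subseteq \{x\} \subseteq V_p$, and since $\tau_{\mathcal{K}}(V_p)$ is a localizing subcategory, it contains the product subcategory of Notation \ref{notnprodcat}.

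The main obstacle is the reverse inclusion $\tau_{\mathcal{K}}(V_p) \subseteq \prod_{x \in V_p} \Gamma_x \mathcal{K}$. For $A$ with $\supp(A) \subseteq V_p$, the support calculations above give $\Gamma_y A = 0$ for $y \notin V_p$ and the orthogonality $\Gamma_x \Gamma_y A = 0$ for distinct $x, y \in V_p$ (applying the local-to-global vanishing criterion in each case). The local-to-global principle thus places $A$ inside $\langle \Gamma_x A : x \in V_p \rangle_\ast$. Following the strategy of \cite{stevenfilt}, I will upgrade this generation statement to an honest coproduct decomposition $A \cong \coprod_{x \in V_p} \Gamma_x A$ by constructing a natural comparison morphism, using the identification $A \cong \Gamma_p A$ from the first step together with the fact that each $\Gamma_x$ is a smashing functor and so commutes with coproducts. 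One then checks that this comparison becomes an isomorphism after applying $\Gamma_y$ for every $y \in V_p$: the orthogonality collapses $\Gamma_y\bigl(\coprod_{x \in V_p} \Gamma_x A\bigr)$ to the single summand $\Gamma_y A$, so $\Gamma_y$ of the comparison is the identity; by the local-to-global principle, its cone — having empty support — must vanish, and the comparison is itself an isomorphism. The delicate part is the construction of the comparison map, which requires combining the smashing structure of the various localizations with the orthogonality established above.
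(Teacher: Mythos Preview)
The paper does not actually give a proof of this proposition: it is stated with a trailing \qed\ and attributed to \cite{stevenfilt}*{Lemma 4.3}, so there is nothing to compare against beyond the cited reference. Your outline is a faithful reconstruction of Stevenson's argument and the first two parts (the equality $\Gamma_p\mathcal{K}=\tau_{\mathcal{K}}(V_p)$ and the inclusion $\prod_{x\in V_p}\Gamma_x\mathcal{K}\subseteq\tau_{\mathcal{K}}(V_p)$) are correct as written.

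The only soft spot is the last paragraph, where you announce a comparison morphism but do not produce it; ``delicate'' is doing a lot of work. Here is the missing observation that makes it concrete. For $A\in\tau_{\mathcal{K}}(V_p)$ and $x\in V_p$ one has $\Gamma_x A\cong \Gamma_{\overline{\{x\}}}A$: indeed $\Gamma_{Y_x}\Gamma_{\overline{\{x\}}}A$ has support contained in $\overline{\{x\}}\cap Y_x\cap V_p$, and your own dimension-function argument shows $\overline{\{x\}}\cap V_p=\{x\}$ while $x\notin Y_x$, so this support is empty and the localization triangle collapses. The acyclization counits $\Gamma_{\overline{\{x\}}}A\to A$ therefore furnish canonical maps $\Gamma_x A\to A$, which assemble into the desired comparison
\[
\coprod_{x\in V_p}\Gamma_x A \longrightarrow A.
\]
Your endgame is then exactly right: applying $\Gamma_y$ (which commutes with coproducts, being of the form $\Gamma_y(\mathbb{I})\ast-$) and using the orthogonality $\Gamma_y\Gamma_x A=0$ for $x\neq y$ together with idempotence $\Gamma_y\Gamma_y A\cong\Gamma_y A$ shows that $\Gamma_y$ of the comparison is an isomorphism for every $y\in\Spc(\mathcal{T}^c)$, whence the cone has empty support and vanishes by Remark~\ref{locglobvanishing}. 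With this one sentence added, your proof is complete and matches the approach of \cite{stevenfilt}.
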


We can also describe the compact objects of $\Gamma_p \mathcal{K}$ in a similar fashion:
\begin{lma}
	Let $\mathcal{L}_i$ be a collection of localizing subcategories of $\mathcal{K}$, indexed by a set $I$, $\prod_{i \in I} \mathcal{L}_i$ as in \ref{notnprodcat}. Then
	\[\left(\prod_{i \in I} \mathcal{L}_i\right)^c = \coprod_{i \in I} \mathcal{L}_i^c~,\]
	where $\coprod_{i \in I} \mathcal{L}_i^c$ denotes the subcategory of $\mathcal{K}$ whose objects are finite coproducts of the objects of $\mathcal{L}_i^c$, where the morphisms and the triangulated structure are defined componentwise with respect to $I$. In particular,
	\[(\Gamma_p \mathcal{K})^c  = \coprod_{x \in V_p} (\Gamma_x \mathcal{K})^c~.\]
	\label{lmacompprod}
\end{lma}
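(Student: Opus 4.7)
The plan is to establish the two inclusions of $(\prod_{i \in I} \mathcal{L}_i)^c = \coprod_{i \in I} \mathcal{L}_i^c$ directly from the definition of compactness, leveraging the componentwise description of morphisms and coproducts in the product category. The particular case for $\Gamma_p \mathcal{K}$ will then follow by combining the general statement with Proposition \ref{stevensondecomposition}.

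The starting observation is that by Notation \ref{notnprodcat}, morphisms in $\prod_i \mathcal{L}_i$ factor componentwise, i.e., $\Hom_{\prod_i \mathcal{L}_i}(\bigoplus_i a_i, \bigoplus_i b_i) = \prod_i \Hom_{\mathcal{L}_i}(a_i, b_i)$, and the coproduct of a family $\{y_j = \bigoplus_i y_{j,i}\}_{j \in J}$ in $\prod_i \mathcal{L}_i$ is again given slotwise as $\bigoplus_i (\bigoplus_j y_{j,i})$, since coproducts commute with coproducts in $\mathcal{K}$.

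For the inclusion $\supseteq$, let $x = \bigoplus_{i \in F} a_i$ be a finite coproduct with $a_i \in \mathcal{L}_i^c$. Using the slotwise description of coproducts and the compactness of each $a_i$, one computes
\[\Hom(x, \textstyle\bigoplus_j y_j) = \prod_{i \in F} \bigoplus_j \Hom_{\mathcal{L}_i}(a_i, y_{j,i}),\]
and then uses the fact that \emph{finite} products of abelian groups commute with direct sums to identify this with $\bigoplus_j \Hom(x, y_j)$, showing that $x$ is compact in $\prod_i \mathcal{L}_i$.

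For the inclusion $\subseteq$, let $c = \bigoplus_i a_i$ be compact in $\prod_i \mathcal{L}_i$. I proceed in two steps. First, viewing each $a_i$ as an object of $\prod_j \mathcal{L}_j$ concentrated in the $i$-th slot, the object $c$ is the coproduct of the family $\{a_i\}_{i \in I}$, and the identity $\id_c$ corresponds to an element of $\Hom(c, \bigsqcup_i a_i)$; compactness forces $\id_c$ to factor through a finite subcoproduct $\bigoplus_{i \in F} a_i$, so that $a_i = 0$ for $i \notin F$. Second, each remaining $a_i$ is a retract of $c$, hence compact in $\prod_j \mathcal{L}_j$; restricting compactness to families $\{y_j\}_j$ supported only in the $i$-th slot, the componentwise $\Hom$-formula shows that $a_i$ is compact as an object of $\mathcal{L}_i$. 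The particular statement about $(\Gamma_p \mathcal{K})^c$ then follows immediately by taking $\mathcal{L}_x := \Gamma_x \mathcal{K}$ for $x \in V_p$ and invoking Proposition \ref{stevensondecomposition}.

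The main subtlety, rather than a genuine obstacle, is to consistently exploit the componentwise interpretation of Notation \ref{notnprodcat}: identifying coproducts in $\prod_i \mathcal{L}_i$ slotwise, and correctly passing between compactness in $\mathcal{L}_i$ and compactness of the $i$-th slot inclusion into $\prod_i \mathcal{L}_i$.
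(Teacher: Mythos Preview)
Your proof is correct and follows essentially the same approach as the paper: both directions are argued directly from the definition of compactness using the componentwise description of morphisms and coproducts, the identity map is used to force finite support in the $\subseteq$ direction, and the particular case for $\Gamma_p \mathcal{K}$ is deduced from Proposition~\ref{stevensondecomposition}. The only cosmetic differences are that you phrase the $\supseteq$ direction via $\Hom$ commuting with coproducts rather than via factorization through a finite subcoproduct, and you invoke the retract argument for compactness of the components, whereas the paper verifies this directly.
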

\begin{proof}
	``$\supseteq$'': Let 
	\[a = \bigoplus_{\substack{i \in J \subset I \\ |J| <\infty}} a_i, \quad a_i \in \mathcal{L}_i^c\] 
	be an object of $\coprod_{i \in I} \mathcal{L}_i^c$ and $b^s, s \in S$ a collection of objects of $\prod_{i \in I} \mathcal{L}_i$ with
	\[b^s = \bigoplus_{i \in I} b^s_i, \quad b^s_i \in \mathcal{L}_i\]
	and 
	\[f: a \to \bigoplus_{s \in S} b^s\] 
	a morphism of $\prod_{i \in I} \mathcal{L}_i$. By definition, this means that $f$ is given by morphisms 
	$f_i: a_i \to \bigoplus_{s \in S} b^s_i$
	and since the $a_i$ are compact, each $f_i$ will factor through a finite coproduct of objects $b^s_i$. Furthermore, $f_i = 0$ for all but finitely many $i$ (namely those contained in $J$). This implies that $f$ factors through a finite coproduct of objects $b_s$. Hence, $a$ is compact.
	
	``$\subseteq$'': Assume that $a$ is an object of $\left(\prod_{i \in I} \mathcal{L}_i\right)^c$. By definition of the category $\prod_{i \in I} \mathcal{L}_i$, we can write $a = \bigoplus_{i \in I} a_i$ with $a_i \in \mathcal{L}_i$. We consider the identity morphism $a \xrightarrow{\id} \bigoplus_{i \in I} a_i$. Since $a$ is compact, it must factor through a finite coproduct of the $a_i$, w.l.o.g.~$a = \bigoplus_{i=1}^n a_i$.
	It remains to show that $a_i \in \mathcal{L}_i^c$ for~$i=1, \ldots, n$. Let $b^s, s \in S$ be a family of objects in $\mathcal{L}_i$ and let 
	\[f_i: a_i \to \bigoplus_{s \in S} b^s\]
	be a morphism in $\mathcal{L}_i$. This defines a morphism $f: a \to \bigoplus_{s \in S} b^s$ in $\prod_{i \in I} \mathcal{L}_i$ by setting $f_j = 0$ for $i \neq j$. Since $a$ is compact, it follows that $f$ must factor through a finite coproduct of objects $b^s$, which, by the definition of morphisms of $\prod_{i \in I} \mathcal{L}_i$, means that $f_i$ must do so. Hence, $a_i$ is compact.
\end{proof}

We give another description of $\Gamma_p \mathcal{K}$ that will elucidate its relation to Definition~\ref{defcycchow}. For $p \in \mathbb{Z}$, define 
\[\mathcal{K}_{(p)} := \tau_{\mathcal{K}}(V_{\leq p})~.\label{Kpdefpage}\]

\begin{lma}
	Assume the local-to-global principle holds for the action of $\mathcal{T}$ on~$\mathcal{K}$. Then, for all specialization-closed subsets $V \subset \Spc(\mathcal{T}^c)$, there is an equality of subcategories
	\[ \tau_{\mathcal{K}}(V) = \Gamma_{V} \mathcal{K} := \lbrace a \in \mathcal{K} | \exists a': a \cong \Gamma_{V} a' \rbrace~.\]
	In particular, $\mathcal{K}_{(p)} =  \Gamma_{V_{\leq p}} \mathcal{K}$.
	\label{subcatequality}
\end{lma}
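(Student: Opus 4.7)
The plan is to prove the two inclusions separately, using Proposition~\ref{propsuppprops} (which describes the supports of $\Gamma_V a$ and $L_V a$) together with the vanishing consequence of the local-to-global principle recalled in Remark~\ref{locglobvanishing}.

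For the inclusion $\Gamma_V \mathcal{K} \subseteq \tau_{\mathcal{K}}(V)$, I would start with an object $a \in \mathcal{K}$ admitting an isomorphism $a \cong \Gamma_V a'$ for some $a' \in \mathcal{K}$. By Proposition~\ref{propsuppprops} applied to $a'$, we have $\supp(a) = \supp(\Gamma_V a') = \supp(a') \cap V \subseteq V$, so $a \in \tau_{\mathcal{K}}(V)$. This direction does not require the local-to-global principle.

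For the reverse inclusion $\tau_{\mathcal{K}}(V) \subseteq \Gamma_V \mathcal{K}$, I would take $a \in \mathcal{K}$ with $\supp(a) \subseteq V$ and consider the localization triangle
\[\Gamma_V a \to a \to L_V a \to \Sigma \Gamma_V a\]
coming from the smashing subcategory $\Gamma_V(\mathbb{I}) \ast \mathcal{K}$. The second part of Proposition~\ref{propsuppprops} gives
\[\supp(L_V a) = \supp(a) \cap (\Spc(\mathcal{T}^c) \setminus V) \subseteq V \cap (\Spc(\mathcal{T}^c) \setminus V) = \emptyset.\]
Now the local-to-global principle enters: by Remark~\ref{locglobvanishing}, an object of $\mathcal{K}$ with empty support is zero, so $L_V a = 0$. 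The localization triangle then forces the map $\Gamma_V a \to a$ to be an isomorphism, exhibiting $a$ as an object of $\Gamma_V \mathcal{K}$.

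The remaining assertion $\mathcal{K}_{(p)} = \Gamma_{V_{\leq p}} \mathcal{K}$ is the special case $V = V_{\leq p}$, which is specialization-closed because $\dim$ is a dimension function (condition (i) of the definition ensures that $V_{\leq p}$ is downward-closed under specialization). I do not anticipate a real obstacle here: the proof is essentially a direct combination of the support calculations in Proposition~\ref{propsuppprops} with the vanishing criterion of Remark~\ref{locglobvanishing}, and the only subtle point is the invocation of the local-to-global principle to conclude $L_V a = 0$ from $\supp(L_V a) = \emptyset$, which is precisely what that hypothesis is designed for.
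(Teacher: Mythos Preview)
Your proposal is correct and follows essentially the same argument as the paper: both inclusions are handled via Proposition~\ref{propsuppprops}, with the local-to-global principle (Remark~\ref{locglobvanishing}) used to conclude $L_V a = 0$ from $\supp(L_V a) = \emptyset$, yielding $a \cong \Gamma_V a$. The only addition is your explicit remark that $V_{\leq p}$ is specialization-closed, which the paper leaves implicit.
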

\begin{proof}
	Let $a \in \Gamma_{V} \mathcal{K}$, then we have an isomorphism~$a \cong \Gamma_{V} a'$. By Proposition \ref{propsuppprops}, we know that $\supp(a) = \supp(\Gamma_{V} a')  = \supp(a') \cap V$, from which it follows that~$\supp(a) \subset V$. Thus,~$a \in \tau_{\mathcal{K}}(V)$. 
	
	Conversely, assume that~$a \in \tau_{\mathcal{K}}(V)$. Consider the localization triangle
	\[\Gamma_{V}  a \to a \to  L_{V} a \to \Sigma \Gamma_{V} a~.\]
	As $\supp(a) \subset V$, it follows again from Proposition \ref{propsuppprops} that $\supp(L_{V} a) = \emptyset$ and therefore $L_{V} a = 0$ by the local-to-global principle, as shown in Remark~\ref{locglobvanishing}. Therefore $\Gamma_{V} a \cong a$ which implies that~$a \in \Gamma_{V} \mathcal{K}$.
\end{proof}

\begin{lma}
	Let $V,W \subset \Spc(\mathcal{T}^c)$ be specialization-closed subsets. Then the localization functor $L_W(\mathbb{I}) \ast -$ induces an equivalence
	\[\tau_{\mathcal{K}}(V)/(\tau_{\mathcal{K}}(V) \cap \tau_{\mathcal{K}}(W)) \xrightarrow{\sim} L_W(\mathbb{I}) \ast(\tau_{\mathcal{K}}(V))~.\]
	\label{lmasubcaloc}
\end{lma}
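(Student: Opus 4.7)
The idea is to realize the induced functor as the composition of a fully faithful embedding with the Bousfield equivalence $\mathcal{K}/\tau_{\mathcal{K}}(W)\xrightarrow{\sim}L_W(\mathbb{I})\ast\mathcal{K}$; the embedding will come from descending the adjunction between $\tau_{\mathcal{K}}(V)$ and $\mathcal{K}$ to the Verdier quotients.

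First I would identify the kernel of $L_W(\mathbb{I})\ast-$ restricted to $\tau_{\mathcal{K}}(V)$. By Proposition~\ref{propsuppprops}, for $a\in\tau_{\mathcal{K}}(V)$ one has $\supp(L_W(\mathbb{I})\ast a)=\supp(a)\cap(\Spc(\mathcal{T}^c)\setminus W)$, which is empty precisely when $a\in\tau_{\mathcal{K}}(W)$. Under the local-to-global principle (implicit via our use of Lemma~\ref{subcatequality}), emptiness of the support forces vanishing by Remark~\ref{locglobvanishing}. Thus the kernel is $\tau_{\mathcal{K}}(V)\cap\tau_{\mathcal{K}}(W)$, and the universal property of the Verdier quotient yields an essentially surjective functor
\[\Phi:\tau_{\mathcal{K}}(V)/(\tau_{\mathcal{K}}(V)\cap\tau_{\mathcal{K}}(W))\to L_W(\mathbb{I})\ast\tau_{\mathcal{K}}(V).\]

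For full faithfulness I would use that the inclusion $\iota:\tau_{\mathcal{K}}(V)\hookrightarrow\mathcal{K}$ admits the acyclization $\Gamma_V=\Gamma_V(\mathbb{I})\ast-$ as a right adjoint, and by Lemma~\ref{subcatequality} the unit $a\to\Gamma_V a$ is an isomorphism for $a\in\tau_{\mathcal{K}}(V)$. Both $\iota$ and $\Gamma_V$ respect the relevant thick subcategories: $\iota(\tau_{\mathcal{K}}(V)\cap\tau_{\mathcal{K}}(W))\subset\tau_{\mathcal{K}}(W)$ is tautological, while $\Gamma_V(\tau_{\mathcal{K}}(W))\subset\tau_{\mathcal{K}}(V)\cap\tau_{\mathcal{K}}(W)$ follows from the identity $\supp(\Gamma_V a)=\supp(a)\cap V$ of Proposition~\ref{propsuppprops}. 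Hence the adjunction descends to a functor $\bar\iota:\tau_{\mathcal{K}}(V)/(\tau_{\mathcal{K}}(V)\cap\tau_{\mathcal{K}}(W))\to\mathcal{K}/\tau_{\mathcal{K}}(W)$ with right adjoint $\bar\Gamma_V$, whose descended unit is still an isomorphism (being the image of one under the quotient functor), so $\bar\iota$ is fully faithful. Composing with the Bousfield equivalence recovers $\Phi$ up to natural isomorphism, giving the claim.

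I expect the main obstacle to be this descent-of-adjunction step, specifically the verification that $\bar\iota\dashv\bar\Gamma_V$ form an adjunction with invertible unit on the quotients. A more hands-on fallback is to argue directly with Verdier's calculus of fractions: given a roof $a\xleftarrow{s}c\xrightarrow{g}b$ in $\mathcal{K}$ with $a,b\in\tau_{\mathcal{K}}(V)$ and $\cone(s)\in\tau_{\mathcal{K}}(W)$, apply $\Gamma_V$ together with the unit isomorphisms $a\cong\Gamma_V a$, $b\cong\Gamma_V b$ to produce an equivalent roof $a\leftarrow\Gamma_V c\to b$ entirely inside $\tau_{\mathcal{K}}(V)$, whose denominator has cone $\Gamma_V\cone(s)\in\tau_{\mathcal{K}}(V)\cap\tau_{\mathcal{K}}(W)$; running the same replacement on null-equivalences between roofs yields the remaining injectivity on $\Hom$.
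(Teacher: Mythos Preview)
Your proposal is correct and follows essentially the same strategy as the paper: both arguments factor the desired equivalence as a fully faithful functor $\bar\iota:\tau_{\mathcal{K}}(V)/(\tau_{\mathcal{K}}(V)\cap\tau_{\mathcal{K}}(W))\hookrightarrow\mathcal{K}/\tau_{\mathcal{K}}(W)$ followed by the Bousfield equivalence $\mathcal{K}/\tau_{\mathcal{K}}(W)\xrightarrow{\sim}L_W(\mathbb{I})\ast\mathcal{K}$, and then read off the essential image. The only difference is in how full faithfulness of $\bar\iota$ is obtained: the paper checks directly that any morphism from an object of $\tau_{\mathcal{K}}(V)$ to an object of $\tau_{\mathcal{K}}(W)$ factors through $\tau_{\mathcal{K}}(V)\cap\tau_{\mathcal{K}}(W)$ (via $\Gamma_V b$) and then invokes \cite{krauseloc}*{Lemma 4.7.1}, whereas you descend the adjunction $\iota\dashv\Gamma_V$ to the quotients and use that the unit remains invertible. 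Your argument is effectively a proof of the cited lemma in this special case, and your calculus-of-fractions fallback is its hands-on version; the paper's route is shorter only because it outsources this step to Krause.
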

\begin{proof}
	Let $a$ be an object of $\tau_{\mathcal{K}}(V)$, $b$ an object of $\tau_{\mathcal{K}}(W)$ and $f: a \to b$ a morphism in $\mathcal{K}$. Consider the commutative diagram 
	\[
		\xymatrix{
			\Gamma_V a \ar[r]\ar[d] & a \ar[r] \ar[d]^f & L_V a \ar[r] \ar[d] & \Sigma \Gamma_V a \ar[d] \\
			\Gamma_{V } b \ar[r] & b \ar[r]  & L_{V} b \ar[r] & \Sigma \Gamma_{V} b
		}
	\]
	obtained by the functoriality of localization triangles. It follows from Proposition \ref{propsuppprops} that $\Gamma_V a \cong a$ and hence, $f$ factors through $\Gamma_{V } b$, an object of  $\tau_{\mathcal{K}}(V) \cap \tau_{\mathcal{K}}(W)$ since $\supp (\Gamma_{V } b) \subset V \cap W$. From \cite{krauseloc}*{Lemma 4.7.1} we deduce that the inclusion $\tau_{\mathcal{K}}(V) \hookrightarrow \mathcal{K}$ induces a fully faithful functor
	\begin{equation}
	\tau_{\mathcal{K}}(V)/(\tau_{\mathcal{K}}(V) \cap \tau_{\mathcal{K}}(W)) \hookrightarrow \mathcal{K}/\tau_{\mathcal{K}}(W)~.
	\label{eqnindquotff}
	\end{equation}
	The essential image of this functors equals the essential image of the Verdier quotient $\mathcal{K} \to \mathcal{K}/\tau_{\mathcal{K}}(W)$ restricted to~$\tau_{\mathcal{K}}(V)$. Composing the functor (\ref{eqnindquotff}) with the right-adjoint of the Verdier quotient $\mathcal{K} \to \mathcal{K}/\tau_{\mathcal{K}}(W)$, we obtain a fully faithful functor $\tau_{\mathcal{K}}(V)/(\tau_{\mathcal{K}}(V) \cap \tau_{\mathcal{K}}(W)) \to \mathcal{K}$ and it remains to show that its essential image is~$L_W(\mathbb{I}) \ast (\tau_{\mathcal{K}}(V))$. But this is clear, since $L_W(\mathbb{I}) \ast -$ is by definition given as the composition of the Verdier quotient functor $\mathcal{K} \to \mathcal{K}/\tau_{\mathcal{K}}(W)$ and its right adjoint.
\end{proof}

The following statement is the desired description of~$\Gamma_p \mathcal{K}$.
\begin{lma}
	Suppose that the action of $\mathcal{T}$ on $\mathcal{K}$ satisfies the local-to-global principle and let $p \in \mathbb{Z}$. There is an equality of subcategories
	\[\Gamma_p \mathcal{K}  = \mathcal{K}_{(p)}/\mathcal{K}_{(p-1)}\]
	where we view the latter quotient as the essential image of the functor $L_{V_{\leq p-1}}(\mathbb{I}) \ast -$ restricted to~$\mathcal{K}_{(p)}$ via Lemma \ref{lmasubcaloc}.
	\label{quotientisgamma}
\end{lma}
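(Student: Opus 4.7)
The plan is to combine the two preceding lemmas with the associativity of the action and the symmetry of $\otimes$, so as to move the localization $L_{V_{\leq p-1}}$ past the acyclization $\Gamma_{V_{\leq p}}$. I will show the two subcategories are equal by proving both inclusions directly on objects.

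First, I would unpack what the right-hand side means: by Lemma \ref{subcatequality} (applied to $V = V_{\leq p}$) we have $\mathcal{K}_{(p)} = \Gamma_{V_{\leq p}}\mathcal{K}$; then applying Lemma \ref{lmasubcaloc} with $V = V_{\leq p}$ and $W = V_{\leq p-1}$ (noting that $\tau_{\mathcal{K}}(V_{\leq p})\cap \tau_{\mathcal{K}}(V_{\leq p-1}) = \tau_{\mathcal{K}}(V_{\leq p-1}) = \mathcal{K}_{(p-1)}$ since $V_{\leq p-1}\subset V_{\leq p}$) identifies $\mathcal{K}_{(p)}/\mathcal{K}_{(p-1)}$ with the full subcategory $L_{V_{\leq p-1}}(\mathbb{I})\ast \mathcal{K}_{(p)} \subset \mathcal{K}$. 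So the task reduces to proving the equality $\Gamma_p \mathcal{K} = L_{V_{\leq p-1}}(\mathbb{I})\ast \mathcal{K}_{(p)}$.

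For the inclusion ``$\subset$'', take $a = \Gamma_p A = \Gamma_{V_{\leq p}}(\mathbb{I})\ast (L_{V_{\leq p-1}}(\mathbb{I})\ast A)$ for some $A\in \mathcal{K}$. Using the associator $\alpha$ and the symmetry of $\otimes$, we rewrite this as
\[
a \;\cong\; (\Gamma_{V_{\leq p}}(\mathbb{I}) \otimes L_{V_{\leq p-1}}(\mathbb{I})) \ast A \;\cong\; (L_{V_{\leq p-1}}(\mathbb{I}) \otimes \Gamma_{V_{\leq p}}(\mathbb{I})) \ast A \;\cong\; L_{V_{\leq p-1}}(\mathbb{I}) \ast \Gamma_{V_{\leq p}} A,
\]
and $\Gamma_{V_{\leq p}} A$ lies in $\Gamma_{V_{\leq p}}\mathcal{K} = \mathcal{K}_{(p)}$. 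Conversely, given $B\in \mathcal{K}_{(p)}$, the local-to-global principle (via Remark \ref{locglobvanishing} and Proposition \ref{propsuppprops}, since $\supp(L_{V_{\leq p}} B) \subset \supp(B)\setminus V_{\leq p} = \emptyset$) yields $L_{V_{\leq p}} B = 0$ in the localization triangle, and hence $B \cong \Gamma_{V_{\leq p}} B$. Running the same associativity/symmetry computation backwards gives
\[
L_{V_{\leq p-1}}(\mathbb{I})\ast B \;\cong\; L_{V_{\leq p-1}}(\mathbb{I})\ast \Gamma_{V_{\leq p}} B \;\cong\; \Gamma_{V_{\leq p}} L_{V_{\leq p-1}} B \;=\; \Gamma_p B,
\]
which lies in $\Gamma_p \mathcal{K}$. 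This proves ``$\supset$''.

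The only nontrivial point is the commutativity step $\Gamma_{V_{\leq p}} L_{V_{\leq p-1}} \cong L_{V_{\leq p-1}} \Gamma_{V_{\leq p}}$; but this is formal, since both functors are given by smashing the action with idempotent objects, and the tensor product on $\mathcal{T}$ is symmetric. I do not expect serious technical obstacles, since everything has already been set up by Lemmas \ref{subcatequality} and \ref{lmasubcaloc} and by the support calculus of Proposition \ref{propsuppprops}; the main care is simply to keep track of the distinction between $\Gamma_V$ as a functor on $\mathcal{K}$ and $\Gamma_V(\mathbb{I})$ as an object of $\mathcal{T}$ acting via $\ast$.
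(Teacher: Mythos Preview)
Your argument is correct but proceeds differently from the paper. The paper works entirely through the support calculus: for ``$\subset$'' it first invokes the equality $\Gamma_p\mathcal{K}=\tau_{\mathcal{K}}(V_p)$ from Proposition~\ref{stevensondecomposition} to place $a$ in $\mathcal{K}_{(p)}$, then shows $\Gamma_{V_{\leq p-1}}a=0$ (hence $a\cong L_{V_{\leq p-1}}a$) via the localization triangle and Proposition~\ref{propsuppprops}; for ``$\supset$'' it computes $\supp(L_{V_{\leq p-1}}a')\subset V_p$ and again appeals to $\Gamma_p\mathcal{K}=\tau_{\mathcal{K}}(V_p)$. Your route instead exploits the symmetry of $\otimes$ in $\mathcal{T}$ together with the associator of the action to commute $\Gamma_{V_{\leq p}}$ past $L_{V_{\leq p-1}}$, reducing both inclusions to the single idempotency $B\cong\Gamma_{V_{\leq p}}B$ for $B\in\mathcal{K}_{(p)}$ already established in Lemma~\ref{subcatequality}. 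This is slightly more self-contained, since it avoids the forward reference to Proposition~\ref{stevensondecomposition} (cited from \cite{stevenfilt}); the paper's version, on the other hand, makes the connection with the support description $\tau_{\mathcal{K}}(V_p)$ more visible.
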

\begin{proof}
	If $a$ is an object of $\Gamma_p \mathcal{K}$, we have $\supp(a) \subset V_p \subset V_{\leq p}$, so we certainly have $a \in \mathcal{K}_{(p)}$. Consider the localization triangle
	\[\Gamma_{V_{\leq p-1}} a \to a \to  L_{V_{\leq p-1}} a \to \Sigma \Gamma_{V_{\leq p-1}} a\]
	where $\supp(\Gamma_{V_{\leq p-1}} a) = V_{\leq p-1} \cap \supp(a) = \emptyset$ by Proposition \ref{propsuppprops}. Thus, we have $\Gamma_{V_{\leq p-1}} a \cong 0$ and we obtain an isomorphism $a \cong L_{V_{\leq p-1}} a$, which proves that $a$ is in the essential image of $L_{V_{\leq p-1}}(\mathbb{I}) \ast -$ restricted to~$\mathcal{K}_{(p)}$.
	
	On the other hand, if $a$ is an object of the essential image of $L_{V_{\leq p-1}}(\mathbb{I}) \ast -$ restricted to $\mathcal{K}_{(p)}$, there exists an object $a'$ of $\mathcal{K}_{(p)}$ such that $a \cong L_{V_{\leq p-1}} a'$. By Lemma \ref{subcatequality}, we know that $\supp(a') \subset V_{\leq p}$. But then by Proposition \ref{propsuppprops},
	\[\supp(a) = \supp(a') \cap (\Spc(\mathcal{T}^c) \setminus V_{\leq p-1}) \subset  V_{\leq p} \cap (\Spc(\mathcal{T}^c) \setminus V_{\leq p-1}) = V_p\]
	which proves that $a \in \Gamma_p \mathcal{K}$.
\end{proof}

We finish the section with a useful result about the subcategories $\tau_{\mathcal{K}}(V)$ for a specialization closed subset $V \subset \Spc(\mathcal{T}^c)$.
\begin{prop}
	Suppose that the action of $\mathcal{T}$ on $\mathcal{K}$ satisfies the local-to-global principle and let $V \subset \Spc(\mathcal{T}^c)$ be specialization-closed. Then $\tau_{\mathcal{K}}(V)$ is compactly generated and there is an equality of subcategories 
	\[(\tau_{\mathcal{K}}(V))^c = (\mathcal{K}^c)_V := \lbrace a \in \mathcal{K}^c: \supp(a) \subset V \rbrace~.\] 
	In particular $\mathcal{K}_{(p)} = \tau_{\mathcal{K}}(V_{\leq p})$ is compactly generated for all $p$ and $(\mathcal{K}_{(p)})^c = (\mathcal{K}^c)_{(p)}$.
	\label{propcompactsubcatsame}
\end{prop}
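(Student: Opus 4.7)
The plan is to identify $\tau_{\mathcal{K}}(V)$ with the localizing subcategory $\langle (\mathcal{K}^c)_V \rangle$ of $\mathcal{K}$ generated by $(\mathcal{K}^c)_V$, and then invoke Neeman's theorem on compactly generated localizing subcategories to read off $(\tau_{\mathcal{K}}(V))^c$.

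First I assemble the relevant compact building blocks. Set $\mathcal{J}_V := \lbrace x \in \mathcal{T}^c : \supp(x) \subset V \rbrace$, a thick $\otimes$-ideal of $\mathcal{T}^c$. By the construction of $\Gamma_V$ preceding Proposition~\ref{propsuppprops}, the object $\Gamma_V(\mathbb{I}) \in \mathcal{T}$ lies in the smashing subcategory $\langle \mathcal{J}_V \rangle$ of $\mathcal{T}$. Fix a set $\lbrace k_i \rbrace_{i \in I}$ of compact generators of $\mathcal{K}$. For each $x \in \mathcal{J}_V$ and $i \in I$, I claim $x \ast k_i \in (\mathcal{K}^c)_V$. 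Compactness follows from rigidity of $x$: the dual $D(x)$ makes $D(x) \ast -$ right adjoint to $x \ast -$, so $\Hom_{\mathcal{K}}(x \ast k_i, -) \cong \Hom_{\mathcal{K}}(k_i, D(x) \ast -)$ preserves coproducts because $k_i$ is compact and $D(x) \ast -$ preserves coproducts. For the support, associativity of the action gives $\Gamma_y(\mathbb{I}) \ast (x \ast k_i) \cong (\Gamma_y(\mathbb{I}) \otimes x) \ast k_i$, which vanishes whenever $y \notin \supp(x)$; hence $\supp(x \ast k_i) \subset \supp(x) \subset V$.

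Next I show $\tau_{\mathcal{K}}(V) = \langle (\mathcal{K}^c)_V \rangle$. The inclusion "$\supseteq$" is immediate since $\tau_{\mathcal{K}}(V)$ is localizing and contains $(\mathcal{K}^c)_V$. For "$\subseteq$", Lemma~\ref{subcatequality} lets us write any $a \in \tau_{\mathcal{K}}(V)$ as $a \cong \Gamma_V(\mathbb{I}) \ast a'$ for some $a' \in \mathcal{K}$. Since $\lbrace k_i \rbrace$ generates $\mathcal{K}$ as a localizing subcategory and $\Gamma_V(\mathbb{I}) \ast -$ is exact and preserves coproducts, we have $a \in \langle \Gamma_V(\mathbb{I}) \ast k_i : i \in I \rangle$. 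A symmetric argument, using $\Gamma_V(\mathbb{I}) \in \langle \mathcal{J}_V \rangle$ and exactness/coproduct-preservation of $- \ast k_i$, yields $\Gamma_V(\mathbb{I}) \ast k_i \in \langle x \ast k_i : x \in \mathcal{J}_V \rangle \subset \langle (\mathcal{K}^c)_V \rangle$. Concatenating, $a \in \langle (\mathcal{K}^c)_V \rangle$.

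Finally, $\tau_{\mathcal{K}}(V)$ is now exhibited as a localizing subcategory of the compactly generated category $\mathcal{K}$ that is generated by objects compact in $\mathcal{K}$. Neeman's theorem (cf.\ \cite{neemantc}) then implies that $\tau_{\mathcal{K}}(V)$ is itself compactly generated and that $(\tau_{\mathcal{K}}(V))^c$ is the smallest thick subcategory of $\mathcal{K}$ containing $(\mathcal{K}^c)_V$ and closed under summands. But $(\mathcal{K}^c)_V$ is already thick and summand-closed inside the idempotent complete category $\mathcal{K}^c$: it is manifestly closed under shifts and extensions, and if $a = a' \oplus a'' \in (\mathcal{K}^c)_V$ then $\Gamma_x a = \Gamma_x a' \oplus \Gamma_x a''$ forces $\supp(a'), \supp(a'') \subset \supp(a) \subset V$. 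Hence $(\tau_{\mathcal{K}}(V))^c = (\mathcal{K}^c)_V$, and specializing to $V = V_{\leq p}$ yields the final sentence. The main technical friction lies in the first paragraph, where the rigidity assumption on $\mathcal{T}^c$ and the compatibility of $\supp$ with the action are both essential in producing the desired compact, support-controlled objects $x \ast k_i$.
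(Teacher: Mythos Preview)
Your proof is correct, but it takes a genuinely different route from the paper's. The paper first invokes \cite{stevenson}*{Corollary 4.11} as a black box for the compact generation of $\Gamma_V\mathcal{K}=\tau_{\mathcal{K}}(V)$, and then identifies $\tau_{\mathcal{K}}(V)$ with the kernel of the smashing localization $L_V(\mathbb{I})\ast-$; from the smashing property it deduces (via \cite{krauseloc}*{Proposition 5.5.1, Lemma 5.4.1}) that the inclusion $\tau_{\mathcal{K}}(V)\hookrightarrow\mathcal{K}$ preserves compactness, which gives $(\tau_{\mathcal{K}}(V))^c\subset(\mathcal{K}^c)_V$, the reverse inclusion being trivial. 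By contrast, you prove the equality $\tau_{\mathcal{K}}(V)=\langle(\mathcal{K}^c)_V\rangle$ directly, manufacturing explicit compact generators $x\ast k_i$ from the rigidity of $\mathcal{T}^c$ and the associativity of the action, and then read off both compact generation and the description of compacts from Neeman's theorem. Your argument is more self-contained (it unpacks what underlies Stevenson's Corollary 4.11) and makes visible exactly where rigidity is used; the paper's argument is shorter and highlights the smashing viewpoint, which is the perspective used repeatedly elsewhere in the article. Both proofs rely on Lemma~\ref{subcatequality}, and hence on the local-to-global principle, in the same way.
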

\begin{proof}
	As the set $V$ is specialization-closed, it follows from \cite{stevenson}*{Corollary 4.11} that $\Gamma_{V}\mathcal{K}$ is compactly generated. But $\Gamma_{V}\mathcal{K}$ is equal to $\tau_{\mathcal{K}}(V)$ by Lemma \ref{subcatequality}, and so it is compactly generated.
	
	The subcategory $\tau_{\mathcal{K}}(V)$ is precisely the kernel of the coproduct-preserving localization functor~$L_{V}(\mathbb{I}) \ast -$: indeed, if $a$ is an object of $\tau_{\mathcal{K}}(V)$, then by Proposition \ref{propsuppprops}
	\[\supp(L_{V} a) = (\Spc(\mathcal{T}) \setminus V) \cap \supp(a) = \emptyset~,\]
	which implies that $L_{V} a =0$ as we have assumed the local-to-global principle, and hence $a \in \ker(L_{V}) (\mathbb{I}) \ast -$. On the other hand, if we assume that $a \in \ker(L_{V}(\mathbb{I}) \ast -)$, then from the localization triangle
	\[\Gamma_{V} a \to a \to L_{V} a \to \Sigma \Gamma_{V} a \]
	we obtain that $a \cong \Gamma_{V} a$. But $\supp(\Gamma_{V} a) = \supp(a) \cap V \subset V$ by Proposition \ref{propsuppprops}, which implies that $a \in \tau_{\mathcal{K}}(V)$.
	
	By \cite{krauseloc}*{Proposition 5.5.1} the right adjoint of the inclusion 
	\[\iota: \tau_{\mathcal{K}}(V)= \ker(L_{V} (\mathbb{I}) \ast - ) \longrightarrow \mathcal{T}\] 
	preserves coproducts and by \cite{krauseloc}*{Lemma 5.4.1} it follows that $\iota$ preserves compactness. Therefore, $(\tau_{\mathcal{K}}(V))^c \subset (\mathcal{K}^c)_V $. The converse inclusion is an immediate consequence of the definition of compactness.
\end{proof}
In the light of the equality of subcategories of Proposition \ref{propcompactsubcatsame}, we will simply use the notation $\mathcal{K}_{(p)}^c$ for $(\mathcal{K}_{(p)})^c = (\mathcal{K}^c)_{(p)}$ if the local-to-global principle holds. Before we continue to the main definitions of the article, let us give a notational overview for the reader's convenience.

\begin{center}
	\begin{tabularx}{\textwidth}{|c|X|l|}
		\hline
		\multicolumn{3}{|c|}{\textbf{Table of notations}}\\
		\multicolumn{3}{|c|}{$x \in \Spc(\mathcal{T}^c),  a \in \mathcal{K}, p \in \mathbb{Z}$}\\
		\hline
		$\Gamma_V,L_V$ for $V \subset \Spc(\mathcal{T}^c)$ & Acyclization and localization functor associated to a specialization-closed set $V \subset \Spc(\mathcal{T}^c)$ & p.\ \pageref{locacycdefpage} \\
		\hline
		$Y_x$ & $ \left\lbrace P \in \Spc(\mathcal{T}^c) : x \notin \overline{\lbrace P \rbrace} \right\rbrace$ & p.\ \pageref{Yxdefpage}\\
		\hline
		$\Gamma_x a$ & $(\Gamma_{\overline{\lbrace x \rbrace}} L_{Y_x}(\mathbb{I})) \ast a$ & p.\ \pageref{Gammaxdefpage}\\
		\hline
		$V_{\leq p}$ & $\lbrace x \in \Spc(\mathcal{T}^c) | \dim(x) \leq p \rbrace$ & p.\ \pageref{Vleqpdefpage}\\
		\hline
		$V_p$ & $\lbrace x \in \Spc(\mathcal{T}^c) | \dim(x) = p \rbrace$ & p.\ \pageref{Vleqpdefpage}\\
		\hline
		$\Gamma_p a$ & $(\Gamma_{V_{\leq p}} L_{V_{\leq p-1}}(\mathbb{I})) \ast a$ & p.\ \pageref{Gammapdefpage}\\
		\hline
		$\mathcal{K}_{(p)}$ & $\tau_{\mathcal{K}}(V_{\leq p}) = \lbrace a \in \mathcal{K}: \supp(a) \subset V_{\leq p} \rbrace$ & p.\ \pageref{Kpdefpage}\\
		\hline
	\end{tabularx}
\end{center}

\section{Relative tensor triangular Chow groups}
\label{sectionrelchow}
In addition to the hypotheses from Convention \ref{convbigcond}, we will assume that the local-to-global principle holds for the action of $\mathcal{T}$ on~$\mathcal{K}$ for the rest of the section. For clarity's sake, we will still explicitly mention this hypothesis in the formulation of the results. 

The category $\mathcal{K}_{(p)}$ is compactly generated for all $p \in \mathbb{Z}$ by Proposition \ref{propcompactsubcatsame}. We therefore have that $(\mathcal{K}_{(p)}/\mathcal{K}_{(p-1)})^c$ is the thick closure of $\mathcal{K}_{(p)}^c/\mathcal{K}_{(p-1)}^c$ in $\mathcal{K}_{(p)}/\mathcal{K}_{(p-1)}$ (see \cite{krauseloc}*{Theorem 5.6.1}). Thus, we get an injection
\[j: \Kzero\left(\mathcal{K}_{(p)}^c/\mathcal{K}_{(p-1)}^c \right) \hookrightarrow \Kzero \left( (\mathcal{K}_{(p)}/\mathcal{K}_{(p-1)})^c \right)~.\]
Furthermore, the quotient functor $\mathcal{K}_{(p)}^c \to \mathcal{K}_{(p)}^c/\mathcal{K}_{(p-1)}^c$ and the embedding $\mathcal{K}_{(p)}^c \to \mathcal{K}_{(p+1)}^c$ induce maps
\[q: \Kzero\left(\mathcal{K}_{(p)}^c \right) \rightarrow \Kzero\left(\mathcal{K}_{(p)}^c/\mathcal{K}_{(p-1)}^c\right)\]
and
\[i:\Kzero\left(\mathcal{K}_{(p)}^c \right) \rightarrow \Kzero\left(\mathcal{K}_{(p+1)}^c \right)~.\]

\begin{dfn}\index{Relative tensor triangular Chow groups}
	We define the \emph{$p$-dimensional tensor triangular cycle groups of~$\mathcal{K}$, relative to the action of $\mathcal{T}$} and the \emph{$p$-dimensional tensor triangular Chow groups of~$\mathcal{K}$, relative to the action of $\mathcal{T}$} as follows:
	\[\Cyc^{\Delta}_p(\mathcal{T},\mathcal{K}) := \Kzero\left((\mathcal{K}_{(p)}/\mathcal{K}_{(p-1)})^c\right)\]
	and
	\[\Chow^{\Delta}_p(\mathcal{T},\mathcal{K}) := \Cyc^{\Delta}_{(p)}(\mathcal{T},\mathcal{K})/ j \circ q(\ker(i)).\]
	\label{dfnrelcycchowgroups}
\end{dfn}

\begin{rem}
	Since we assumed that the local-to-global principle is satisfied, we can view an element of $\Cyc^{\Delta}_p(\mathcal{T},\mathcal{K})$ as a finite formal sum of $p$-dimensional points $x_i$ of $\Spc(\mathcal{T}^c)$, with coefficients in $\Kzero \left(\left(\Gamma_x \mathcal{K}\right)^c\right)$ for $x \in V_p$, by Proposition \ref{stevensondecomposition} and Lemma \ref{lmacompprod}.
\end{rem}

\begin{rem}
	The category $\mathcal{K}_{(p)}/\mathcal{K}_{(p-1)}$ has arbitrary coproducts and is therefore idempotent complete (cf. \cite{neemantc}*{Proposition 1.6.8}). Since $(\mathcal{K}_{(p)}/\mathcal{K}_{(p-1)})^c$ is the thick closure of $\mathcal{K}_{(p)}^c/\mathcal{K}_{(p-1)}^c$ in $\mathcal{K}_{(p)}/\mathcal{K}_{(p-1)}$, we obtain that $(\mathcal{K}_{(p)}/\mathcal{K}_{(p-1)})^c$ is equivalent to the idempotent completion~$(\mathcal{K}_{(p)}^c/\mathcal{K}_{(p-1)}^c)^{\natural}$, cf.\ Definition~\ref{defcycchow}.
	\label{remcompactidcomp}
\end{rem}

Next, we compare $\Cyc^{\Delta}_p(\mathcal{T},\mathcal{T})$ and $\Chow^{\Delta}_p(\mathcal{T},\mathcal{T})$ to the tensor triangular cycle and Chow groups $\Cyc^{\Delta}_p(\mathcal{T}^c), \Chow^{\Delta}_p(\mathcal{T}^c)$ from Definition~\ref{defcycchow}.

\begin{prop}
	Consider the action of $\mathcal{T}$ on itself via the tensor product $\otimes$ and assume that the local-to-global principle holds for this action. Then we have isomorphisms
	\[\Cyc^{\Delta}_p(\mathcal{T},\mathcal{T}) \cong \Cyc^{\Delta}_p(\mathcal{T}^c) \quad \text{and} \quad \Chow^{\Delta}_p(\mathcal{T},\mathcal{T}) \cong \Chow^{\Delta}_p(\mathcal{T}^c)~.\]
	\label{relativecompactcomparison}
\end{prop}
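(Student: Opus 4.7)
The plan is to show that the two cycle and Chow groups agree by establishing an equivalence of the relevant triangulated categories, after which the $\Kzero$ computations and the subgroups $j \circ q(\ker(i))$ match up tautologically.

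The first step is to identify the filtration subcategories on both sides. In the essentially small setup, $(\mathcal{T}^c)_{(p)} = \{a \in \mathcal{T}^c : \dim(\supp_{\mathrm{Balmer}}(a)) \leq p\}$ uses the classical Balmer support, while in the relative setup $\mathcal{T}_{(p)} = \tau_{\mathcal{T}}(V_{\leq p})$ uses the Balmer--Favi support defined via the residue objects $\Gamma_x \mathbb{I}$. For compact objects these two notions of support coincide, by \cite{balfavibig} (this is the analogue of the fact that $\supp(a) = \{x : \Gamma_x \mathbb{I} \otimes a \neq 0\}$ recovers Balmer's support on $\mathcal{T}^c$). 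Combining this with Proposition \ref{propcompactsubcatsame}, I get the identification
\[\mathcal{T}_{(p)}^c = (\mathcal{T}^c)_{(p)}\]
as triangulated subcategories of $\mathcal{T}^c$.

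Next, I would pass to the quotient. By Remark \ref{remcompactidcomp}, $(\mathcal{T}_{(p)}/\mathcal{T}_{(p-1)})^c$ is equivalent to the idempotent completion $(\mathcal{T}_{(p)}^c/\mathcal{T}_{(p-1)}^c)^{\natural}$, which under the identification above is $((\mathcal{T}^c)_{(p)}/(\mathcal{T}^c)_{(p-1)})^{\natural}$. Applying $\Kzero$ yields the desired cycle group isomorphism
\[\Cyc^{\Delta}_p(\mathcal{T},\mathcal{T}) = \Kzero\!\left((\mathcal{T}_{(p)}/\mathcal{T}_{(p-1)})^c\right) \cong \Kzero\!\left(((\mathcal{T}^c)_{(p)}/(\mathcal{T}^c)_{(p-1)})^{\natural}\right) = \Cyc^{\Delta}_p(\mathcal{T}^c).\]

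For the Chow group statement I need to check that the subgroup quotiented out in Definition \ref{dfnrelcycchowgroups} matches the one in Definition \ref{defcycchow}. The maps $i : \Kzero(\mathcal{T}_{(p)}^c) \to \Kzero(\mathcal{T}_{(p+1)}^c)$, $q : \Kzero(\mathcal{T}_{(p)}^c) \to \Kzero(\mathcal{T}_{(p)}^c/\mathcal{T}_{(p-1)}^c)$, and the injection $j$ into the idempotent completion's $\Kzero$ are induced at the level of triangulated categories by inclusion, Verdier quotient, and embedding into the idempotent completion respectively. Since the identification $\mathcal{T}_{(p)}^c = (\mathcal{T}^c)_{(p)}$ is compatible with inclusions as $p$ varies and with passage to Verdier quotients, the diagrams of $\Kzero$'s used in the two definitions agree under the above isomorphisms. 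Hence $j \circ q(\ker(i))$ is the same subgroup on both sides, and the Chow group isomorphism follows.

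The main conceptual obstacle is the first step, namely ensuring that the Balmer--Favi support (defined via the $\Gamma_x$ functors and used throughout the relative theory) really does agree with the classical Balmer support on compact objects. Everything else is essentially bookkeeping: once the filtrations agree and the idempotent completion appears in the right place via Remark \ref{remcompactidcomp}, the remainder of the argument amounts to observing that applying $\Kzero$ to a compatible diagram of equivalences produces a compatible diagram of isomorphisms.
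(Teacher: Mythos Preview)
Your proposal is correct and follows essentially the same route as the paper: use Remark \ref{remcompactidcomp} to identify $(\mathcal{T}_{(p)}/\mathcal{T}_{(p-1)})^c$ with $(\mathcal{T}^c_{(p)}/\mathcal{T}^c_{(p-1)})^{\natural}$, then observe that the maps $i,q,j$ agree in both definitions. The only difference is that you make explicit the identification $\mathcal{T}_{(p)}^c = (\mathcal{T}^c)_{(p)}$ via agreement of the Balmer--Favi and classical supports on compacts, whereas the paper has already absorbed this into its notational convention following Proposition \ref{propcompactsubcatsame}.
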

\begin{proof}
	By definition we have $\Cyc^{\Delta}_p(\mathcal{T},\mathcal{T}) = \Kzero\left((\mathcal{T}_{(p)}/\mathcal{T}_{(p-1)})^c\right)$ and Remark \ref{remcompactidcomp} shows that the category $(\mathcal{T}_{(p)}/\mathcal{T}_{(p-1)})^c$ is equivalent to~$\left( \mathcal{T}^c_{(p)}/\mathcal{T}^c_{(p-1)}\right)^{\natural}$.
	We conclude that
	\[\Cyc^{\Delta}_p(\mathcal{T},\mathcal{T}) = \Kzero\left((\mathcal{T}_{(p)}/\mathcal{T}_{(p-1)})^c\right) \cong \Kzero\left(\left( \mathcal{T}^c_{(p)}/\mathcal{T}^c_{(p-1)}\right)^{\natural}\right)\]
	which is equal to $\Cyc^{\Delta}_p(\mathcal{T}^c)$ by Definition \ref{defcycchow}. The notions of rational equivalence agree as well as the maps $j,q,i$ from Definition \ref{dfnrelcycchowgroups} are equal to the corresponding maps from Definition \ref{defcycchow}.
\end{proof}

Let us work out a brief example computation which will be generalized in Section \ref{sectionkinj}. Let $X$ be a noetherian separated scheme and let $\mathrm{D}(X) := \mathrm{D}(\mathrm{Qcoh}(X))$ be the full derived category of complexes of quasi-coherent $\mathcal{O}_X$-modules. The category $\mathrm{D}(X)$ is a compactly-rigidly generated tensor triangulated category with arbitrary coproducts (see \cite{balfavibig}*{Example 1.2}), and we have $\mathrm{D}(X)^c = \Dperf(X)$ (cf.\ \cite{bondalvdbergh}*{Theorem 3.1.1}).
\begin{lma}
	Any action of $\mathrm{D}(X)$ on a compactly generated triangulated category $\mathcal{K}$ (in the sense of Section~\ref{sectionprelim}) satisfies the local-to-global principle.
	\label{lmadxlocglob}
\end{lma}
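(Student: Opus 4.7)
The plan is to invoke Stevenson's Proposition~6.8 of \cite{stevenson}, which was already flagged in the discussion following Definition~\ref{locglobdef}: that result establishes the local-to-global principle for any action of a compactly-rigidly generated tensor triangulated category $\mathcal{T}$ on a compactly generated $\mathcal{K}$, whenever $\mathcal{T}$ arises as the homotopy category of a suitable monoidal model category. So at heart the proof is a citation, and my main job is to verify the hypotheses in the case $\mathcal{T} = \mathrm{D}(X)$.

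The first step is therefore to exhibit such a model-categorical presentation of $\mathrm{D}(X)$. Since $X$ is noetherian and separated, the chain complex category $\mathrm{Ch}(\mathrm{Qcoh}(X))$ admits, for instance, Hovey's injective (or flat) model structure, which is symmetric monoidal and whose homotopy category is $\mathrm{D}(X)$ equipped with the derived tensor product $\otimes^L_{\mathcal{O}_X}$. The remaining hypotheses needed by Stevenson's proposition are already in hand: $\mathrm{D}(X)$ is compactly-rigidly generated in the sense of Definition~\ref{dfncompriggen} (as recalled in the paragraph preceding the lemma), and the Balmer spectrum $\Spc(\mathrm{D}(X)^c) = \Spc(\Dperf(X))$ is canonically homeomorphic to the underlying topological space of $X$ and therefore noetherian.

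With these ingredients assembled, Stevenson's theorem immediately yields the local-to-global principle for any action of $\mathrm{D}(X)$ on a compactly generated category $\mathcal{K}$, with no further substantive argument required. The one genuine check---the existence of a symmetric monoidal model structure on $\mathrm{Ch}(\mathrm{Qcoh}(X))$ presenting $\mathrm{D}(X)$---is the main conceptual step, but it is standard in the noetherian separated case. Should one wish to bypass model categories altogether, an alternative is to appeal to a variant of Stevenson's result that derives the local-to-global principle directly from the noetherianity of $\Spc(\mathcal{T}^c)$, since that is the feature actually being used in the background.
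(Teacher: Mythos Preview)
Your proposal is correct and matches the paper's own proof essentially verbatim: the paper also invokes \cite{stevenson}*{Proposition 6.8} and reduces to the existence of a monoidal model category presenting $\mathrm{D}(X)$, citing \cite{gillespiemodel} for this fact (where you mention Hovey's flat/injective model structure). The only cosmetic difference is the specific reference for the model structure.
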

\begin{proof}
	By the criterion of \cite{stevenson}*{Proposition 6.8} it suffices to show that $\mathrm{D}(X)$ arises as the homotopy category of a monoidal model category, which is the main result of \cite{gillespiemodel}.
\end{proof}

\begin{cor}
	We have isomorphisms
	\[\Cyc^{\Delta}_p(\mathrm{D}(X),\mathrm{D}(X)) \cong \Cyc^{\Delta}_p(\Dperf(X)) \]
	and
	\[\Chow^{\Delta}_p(\mathrm{D}(X),\mathrm{D}(X)) \cong \Chow^{\Delta}_p(\Dperf(X))\]
	for all $p \in \mathbb{Z}$. In particular, if $X$ is non-singular, of finite type over a field and we equip $\mathrm{D}(X)^c$ with the opposite of the Krull codimension as a dimension function, we have
	\[\Cyc^{\Delta}_p(\mathrm{D}(X),\mathrm{D}(X)) \cong \Cyc^{-p}(X) \quad \text{and} \quad \Chow^{\Delta}_p(\mathrm{D}(X),\mathrm{D}(X)) \cong \Chow^{-p}(X).\]
	~
	\label{correlagr}
\end{cor}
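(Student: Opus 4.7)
The plan is to assemble this corollary directly from three previously stated pieces, since all the substantive work has already been done. First I would observe that $\mathrm{D}(X)$ together with its own self-action via $\otimes$ fits Convention \ref{convbigcond}: the paragraph preceding Lemma \ref{lmadxlocglob} already records that $\mathrm{D}(X)$ is compactly-rigidly generated with $\mathrm{D}(X)^c = \Dperf(X)$, and since $X$ is noetherian the topological space $\Spc(\mathrm{D}(X)^c) \cong X$ is noetherian, so we can equip it with any dimension function we like (Remark \ref{remselfaction} takes care of the self-action hypothesis).

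Next I would invoke Lemma \ref{lmadxlocglob} with $\mathcal{K} = \mathrm{D}(X)$ to conclude that the self-action of $\mathrm{D}(X)$ on itself satisfies the local-to-global principle. This is precisely the hypothesis needed to apply Proposition \ref{relativecompactcomparison} with $\mathcal{T} = \mathrm{D}(X)$, yielding
\[
\Cyc^{\Delta}_p(\mathrm{D}(X),\mathrm{D}(X)) \cong \Cyc^{\Delta}_p(\mathrm{D}(X)^c) = \Cyc^{\Delta}_p(\Dperf(X))
\]
and the analogous isomorphism for the Chow groups. This establishes the first half of the corollary.

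For the second half, when $X$ is non-singular and of finite type over a field and we equip $\Spc(\mathrm{D}(X)^c) \cong X$ with the opposite Krull codimension dimension function, I would simply chain the previous isomorphism with Theorem \ref{agreementthm}, which gives $\Cyc^{\Delta}_p(\Dperf(X)) \cong \Cyc^{-p}(X)$ and $\Chow^{\Delta}_p(\Dperf(X)) \cong \Chow^{-p}(X)$ under exactly these hypotheses.

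There is essentially no obstacle here; the proof is a two-line composition of cited results. The only thing worth spelling out in the write-up is why the hypotheses of Convention \ref{convbigcond} and of Proposition \ref{relativecompactcomparison} are met in this setting, so the reader can see that nothing extra is being swept under the rug.
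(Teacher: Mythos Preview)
Your proposal is correct and follows essentially the same route as the paper: the paper's proof is the one-liner ``immediate consequence of Proposition \ref{relativecompactcomparison} and the agreement theorem of \cite{kleinchow}; the local-to-global principle holds by Lemma \ref{lmadxlocglob}.'' You have simply unpacked the hypothesis-checking that the paper leaves implicit.
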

\begin{proof}
	This is an immediate consequence of Proposition \ref{relativecompactcomparison} and the agreement theorem of \cite{kleinchow}. The local-to-global prinicple holds for the action of $\mathrm{D}(X)$ on itself by Lemma \ref{lmadxlocglob}.
\end{proof}

\section{The homotopy category of quasi-coherent injective sheaves and singular varieties}
\label{sectionkinj}
In this section, we illustrate the definitions given in Section \ref{sectionrelchow} by discussing the category $\mathrm{K}(\mathrm{Inj}(X))$, the homotopy category of complexes of injective quasi-coherent sheaves on a noetherian scheme $X$. The category $\mathrm{K}(\mathrm{Inj}(X))$ is a compactly generated triangulated category with 
\[\mathrm{K}(\mathrm{Inj}(X))^c \cong \Db(\mathrm{Coh}(X))~,\] 
the bounded derived category of coherent sheaves on $X$ (see \cite{krause-stablederived}). The equivalence can be given explicitly by taking a bounded complex of coherent sheaves to a quasi-isomorphic complex of injective quasi-coherent sheaves.

Let us also recall the definition of the \emph{mock homotopy category of projectives} $\mathrm{K_m}(\mathrm{Proj}(X))$ from \cite{murfet-thesis}. It is defined as the Verdier quotient $\mathrm{K}(\mathrm{Flat}(X))/\mathbf{E}(X)$, where $\mathrm{Flat}(X)$ denotes the category of flat quasi-coherent sheaves on $X$ and $\mathbf{E}(X)$ is the localizing subcategory consisting of those complexes $\mathcal{E}^{\bullet}$ such that $\mathcal{E}^{\bullet} \otimes \mathcal{F}$ is acyclic for every quasi-coherent sheaf $\mathcal{F}$. The category $\mathrm{K_m}(\mathrm{Proj}(X))$ is a compactly generated \emph{tensor} triangulated category.

\begin{prop}
	The category $\mathrm{K}(\mathrm{Inj}(X))$ admits an action of $\mathrm{D}(X)$, given by first taking $\mathrm{K}$-flat resolutions and then taking the tensor product of chain complexes. The local-to-global principle holds for this action.
	\label{propdxactsonkinjx}
\end{prop}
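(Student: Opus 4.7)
The plan is to construct the action bifunctor $\ast$ explicitly via resolutions, verify the coherence and coproduct-preservation axioms, and then deduce the local-to-global principle directly from Lemma \ref{lmadxlocglob}.

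First I would make the construction of $\ast: \mathrm{D}(X) \times \mathrm{K}(\mathrm{Inj}(X)) \to \mathrm{K}(\mathrm{Inj}(X))$ precise. Fix a functorial K-flat resolution $p(F) \to F$ on $\mathrm{D}(X)$ (cf.\ Spaltenstein). Since $X$ is noetherian, \cite{krause-stablederived} shows that the canonical functor $Q: \mathrm{K}(\mathrm{Inj}(X)) \to \mathrm{D}(X)$ admits a right adjoint $\iota: \mathrm{D}(X) \to \mathrm{K}(\mathrm{Inj}(X))$ given by K-injective resolution. Then set
$$F \ast G := \iota\bigl(p(F) \otimes_{\mathcal{O}_X} Q(G)\bigr),$$
completing the ``first K-flat resolve, then tensor'' recipe of the proposition by re-entering $\mathrm{K}(\mathrm{Inj}(X))$ via $\iota$. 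Bi-exactness is inherited entrywise from each of $p$, $\otimes$, $Q$ and $\iota$ being triangulated.

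Next I would verify the action axioms of \cite{stevenson}*{Definition 3.2}. The unitor $l_G: \mathcal{O}_X \ast G \xrightarrow{\sim} G$ arises from $p(\mathcal{O}_X) \otimes Q(G) \simeq Q(G)$ combined with the fact that the counit $\iota \circ Q \to \id_{\mathrm{K}(\mathrm{Inj}(X))}$ is an isomorphism, because K-injective resolution is a homotopy equivalence on K-injective complexes. The associator is assembled from the strict associativity of the tensor product of complexes, the natural quasi-isomorphism $p(F \otimes^{L} F') \simeq p(F) \otimes p(F')$, and the unit-counit relations for the adjunction $(Q,\iota)$; the pentagon and triangle identities then reduce to the corresponding ones for the symmetric monoidal structure on $\mathrm{D}(X)$. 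For coproduct preservation, noetherianity of $X$ guarantees that arbitrary coproducts of injective quasi-coherent sheaves remain injective, so $\mathrm{K}(\mathrm{Inj}(X))$ admits coproducts computed degreewise. Both $p$ and the ordinary tensor product of complexes preserve coproducts, and $\iota$ preserves them because $Q$ sends compacts to compacts by \cite{krause-stablederived}, so its right adjoint preserves coproducts.

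Finally, the local-to-global principle is free from Lemma \ref{lmadxlocglob}, since $\mathrm{K}(\mathrm{Inj}(X))$ is compactly generated by \cite{krause-stablederived}. The main obstacle I anticipate is not any single verification but the strict coherence bookkeeping: making the associator and unitor satisfy all pentagon, triangle, and compatibility diagrams on the nose rather than merely up to a further natural isomorphism. The cleanest way around this is to lift the construction to a model-categorical setting, producing $\ast$ as the total derived functor of a Quillen bifunctor between Gillespie's monoidal model structure on complexes of quasi-coherent sheaves \cite{gillespiemodel} and the injective model structure of \cite{krause-stablederived}; this yields strictly coherent action data automatically.
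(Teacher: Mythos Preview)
Your construction has a genuine gap: by defining $F \ast G := \iota\bigl(p(F) \otimes_{\mathcal{O}_X} Q(G)\bigr)$ you factor the second variable through $Q: \mathrm{K}(\mathrm{Inj}(X)) \to \mathrm{D}(X)$. This collapses every acyclic complex of injectives to zero, and for singular $X$ such complexes form a nontrivial subcategory of $\mathrm{K}(\mathrm{Inj}(X))$ (indeed, this is Krause's whole point in \cite{krause-stablederived}). Consequently your unitor argument breaks: the map $\iota \circ Q \to \id_{\mathrm{K}(\mathrm{Inj}(X))}$ is \emph{not} an isomorphism in general---it is an isomorphism only on K-injective complexes, and $\mathrm{K}(\mathrm{Inj}(X))$ contains many non-K-injective objects. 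So you have neither constructed the action described in the statement (``tensor product of chain complexes'' means the honest homotopy-category tensor, not a derived one followed by re-resolution), nor verified the unitor for the action you did construct.

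The key fact you are missing, and which the paper's proof uses, is that on a noetherian scheme the tensor product of a flat quasi-coherent sheaf with an injective quasi-coherent sheaf is again injective (cf.\ \cite{murfet-thesis}*{Lemma 8.2}). Hence for $F$ K-flat and $G \in \mathrm{K}(\mathrm{Inj}(X))$, the naive complex $p(F) \otimes G$ already lies in $\mathrm{K}(\mathrm{Inj}(X))$: no passage through $\mathrm{D}(X)$ and no re-resolution by $\iota$ is needed. The paper then shows that this defines an action of $\mathrm{K}(\mathrm{Flat}(X))$ on $\mathrm{K}(\mathrm{Inj}(X))$ which factors through the mock homotopy category $\mathrm{K_m}(\mathrm{Proj}(X))$, and restricts along the monoidal embedding $\mathrm{D}(X) \hookrightarrow \mathrm{K_m}(\mathrm{Proj}(X))$ to obtain the desired action; coherence is inherited from the ambient symmetric monoidal structure on the homotopy category of all complexes. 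Your invocation of Lemma~\ref{lmadxlocglob} for the local-to-global principle is correct and is exactly what the paper does.
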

\begin{proof}
	The category $\mathrm{K}(\mathrm{Flat}(X))$ is a tensor-triangulated category with tensor product the usual tensor product of chain complexes. It has an action on $\mathrm{K}(\mathrm{Inj}(X))$ which is also given by said tensor product  since, on a noetherian scheme, the tensor product of a complex of flat quasi-coherent sheaves with a complex of injective quasi-coherent sheaves is of the latter type again (see \cite{murfet-thesis}*{Lemma 8.2 and preceding remark}). The axioms for an action of Section \ref{sectionprelim} are satisfied in this case, since the tensor product of chain complexes makes the homotopy category of all quasi-coherent sheaves of modules on $X$ a symmetric monoidal category which contains both $\mathrm{K}(\mathrm{Flat}(X))$ and $\mathrm{K}(\mathrm{Inj}(X))$. This action factors through $\mathrm{K_m}(\mathrm{Proj}(X))$, since by \cite{murfet-thesis}*{Lemma 8.2} the complex $F \otimes D$ is contractible when $F \in \mathbf{E}(X)$ and $D$ is a complex of injective quasi-coherent sheaves. The category $\mathrm{D}(X)$ admits a monoidal embedding into $\mathrm{K}_m(\mathrm{Proj}(X))$ by taking $\mathrm{K}$-flat resolutions (see \cite{murfet-thesis}*{Section 5}) and we define the action of $\mathrm{D}(X)$ on $\mathrm{K}(\mathrm{Inj}(X))$ as the restriction of the action of $\mathrm{K_m}(\mathrm{Proj}(X))$ on $\mathrm{K}(\mathrm{Inj}(X))$. The local-to-global principle holds for this action by Lemma \ref{lmadxlocglob}.
\end{proof}
The above implies that we have an associated support theory for the action of $\mathrm{D}(X)$ on $\mathrm{K}(\mathrm{Inj}(X))$ which assigns to every object of $\mathrm{K}$ a support in $\Spc(\mathrm{D}(X)^c) = X$.

Let us show next that the $\mathrm{K}(\mathrm{Inj}(X))$ behaves well under smashing localizations. Recall from \cite{stevenson}*{Section 8} that for an open subscheme $U \subset \Spc(\mathrm{D}(X)) = X$ with closed complement $Z$, we have an associated smashing localization $\mathrm{K}(\mathrm{Inj}(X))_U$ given as the essential image of the functor~$L_Z(\mathcal{O}_X) \ast -$. If $i: U \hookrightarrow X$ denotes the inclusion of $U$ into $X$, then $L_Z: \mathrm{D}(X) \to \mathrm{D}(X)$ is given explicitly as $\mathbf{R}i_* \circ i^*(-)$.  Denote by $\mathrm{K}(\mathrm{Inj}(X))_{Z} $ the full triangulated subcategory of those objects $C^{\bullet}$ such that
\[\bigcup_{n} \supp (C^n) \subset Z\]
where $C^{\bullet}$ is taken to be homotopically minimal (see \cite{krause-stablederived}).
\begin{lma}[see {\cite[Proposition 6.9]{krause-stablederived}}]
	Let $i: U \to X$ denote the inclusion of the open subscheme $U$. The functor
	\[i^*: \mathrm{K}(\mathrm{Inj}(X)) \to \mathrm{K}(\mathrm{Inj}(U))\]
	induces an equivalence
	\[\mathrm{K}(\mathrm{Inj}(X))/\mathrm{K}(\mathrm{Inj}(X))_{Z} \cong \mathrm{K}(\mathrm{Inj}(U))~.\]
	\label{lmarestoopen}
\end{lma}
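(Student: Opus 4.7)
The plan is to realize $i^*: \mathrm{K}(\mathrm{Inj}(X)) \to \mathrm{K}(\mathrm{Inj}(U))$ as a Verdier localization whose kernel is exactly $\mathrm{K}(\mathrm{Inj}(X))_Z$; the claimed equivalence then follows from the universal property of the quotient. Since this is cited as \cite{krause-stablederived}*{Proposition 6.9}, I would essentially follow Krause's approach.

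First I would check that $i^*$ and $i_*$ restrict to an adjoint pair between $\mathrm{K}(\mathrm{Inj}(X))$ and $\mathrm{K}(\mathrm{Inj}(U))$. Because $X$ is noetherian, every quasi-coherent injective decomposes as a direct sum of indecomposable injectives $E_{\mathfrak{p}}$ indexed by points $\mathfrak{p} \in X$, and restriction to $U$ simply discards the summands with $\mathfrak{p} \in Z$ while preserving the remaining ones as quasi-coherent injectives on $U$; so $i^*$ is well-defined, exact and coproduct-preserving on $\mathrm{K}(\mathrm{Inj})$. Dually, $i_*$ preserves quasi-coherent injectives because its left adjoint $i^*$ is exact, and it descends to a functor $\mathrm{K}(\mathrm{Inj}(U)) \to \mathrm{K}(\mathrm{Inj}(X))$ right adjoint to $i^*$. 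Since $i$ is an open immersion the counit $i^*i_* \to \mathrm{id}$ is an isomorphism already at the level of chain complexes, so $i_*$ is fully faithful and $i^*$ is essentially surjective.

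The remaining step is to identify $\ker(i^*) = \mathrm{K}(\mathrm{Inj}(X))_Z$. Replacing a given complex by its (unique up to isomorphism) homotopically minimal representative $C^{\bullet}$, vanishing in the homotopy category is equivalent to vanishing as a complex; hence $i^*C^{\bullet} = 0$ iff $C^n|_U = 0$ for every $n$, iff $\supp(C^n) \subset Z$ for every $n$, which is precisely the definition of $\mathrm{K}(\mathrm{Inj}(X))_Z$. Combined with the previous step, this shows that $i^*$ factors through a fully faithful, essentially surjective functor $\mathrm{K}(\mathrm{Inj}(X))/\mathrm{K}(\mathrm{Inj}(X))_Z \to \mathrm{K}(\mathrm{Inj}(U))$, i.e.\ an equivalence.

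The most delicate ingredient is the use of homotopically minimal complexes to detect vanishing in $\mathrm{K}(\mathrm{Inj})$, since this relies on the injective-cogenerator theory of $\mathrm{Qcoh}(X)$ for noetherian $X$ developed in \cite{krause-stablederived}; the rest is formal from the fact that open restriction preserves quasi-coherent injectives and that $i^* i_* \cong \mathrm{id}$ for an open immersion.
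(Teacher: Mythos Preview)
The paper supplies no proof of its own for this lemma; it is simply quoted from \cite{krause-stablederived}*{Proposition 6.9}, so there is nothing to compare against beyond Krause's original argument. Your sketch reproduces that argument correctly: the adjunction $(i^*,i_*)$ on $\mathrm{K}(\mathrm{Inj})$ together with $i^*i_*\cong\id$ exhibits $i^*$ as a Bousfield localization, and the kernel identification via homotopically minimal representatives (using, implicitly, that $i^*$ preserves minimality---which follows from the Matlis decomposition you already invoke) pins down $\ker(i^*)=\mathrm{K}(\mathrm{Inj}(X))_Z$.
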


\begin{rem}
The associated localization functor on $\mathrm{K}(\mathrm{Inj}(X))$ is given by $i_* \circ i^*(-)$ (see \cite{krause-stablederived}). It realizes the Verdier quotient functor inside $\mathrm{K}(\mathrm{Inj}(X))$.
\label{remreslocfunc}
\end{rem}

\begin{lma}
	Let $f: S \to T$ be an affine morphism. Then the functor $f_* \circ f^*(-)$ is isomorphic to $f_*(\mathcal{O}_S) \otimes_{\mathcal{O}_T} -$. In particular, if $f$ is also flat, the sheaf $f_{*}(\mathcal{O}_S)$ is flat.
	\label{lmaaffpullpush}
\end{lma}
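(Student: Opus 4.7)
The plan is to recognize the first claim as the projection formula specialized to $\mathcal{F} = \mathcal{O}_S$, and to verify the resulting statement by reducing to an affine-local computation where it becomes the tautological identification $M \otimes_A B \cong B \otimes_A M$. Because $f$ is affine, no extra hypothesis on $\mathcal{G}$ will be needed to make the projection-formula map an isomorphism.

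First I would construct the natural comparison morphism. Using the unit of the adjunction $(f^*, f_*)$ together with the $\mathcal{O}_T$-module structure on $f_*\mathcal{O}_S$, one obtains a canonical projection-formula map
\[\eta_{\mathcal{G}} \colon f_*(\mathcal{O}_S) \otimes_{\mathcal{O}_T} \mathcal{G} \longrightarrow f_*(\mathcal{O}_S \otimes_{\mathcal{O}_S} f^*\mathcal{G}) \cong f_* f^*\mathcal{G},\]
natural in the quasi-coherent sheaf $\mathcal{G}$ on $T$. To see $\eta_{\mathcal{G}}$ is an isomorphism I would check this Zariski-locally on $T$: for an affine open $\operatorname{Spec} A \subseteq T$, affineness of $f$ gives $f^{-1}(\operatorname{Spec} A) = \operatorname{Spec} B$ for some $A$-algebra $B$, and if $\mathcal{G}|_{\operatorname{Spec} A} = \widetilde{M}$ then both sides of $\eta_{\mathcal{G}}$ restrict to the quasi-coherent sheaf $\widetilde{B \otimes_A M}$ on $\operatorname{Spec} A$, with $\eta_{\mathcal{G}}$ being the identity under this identification.

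For the second assertion I would combine the first part with a short exactness observation: if $f$ is additionally flat then $f^*$ is exact on quasi-coherent sheaves, and $f_*$ is exact on quasi-coherent sheaves because $f$ is affine. Hence $f_* f^*$ is exact, and by the natural isomorphism just established the functor $f_*(\mathcal{O}_S) \otimes_{\mathcal{O}_T} -$ is exact, which is exactly the statement that $f_*(\mathcal{O}_S)$ is $\mathcal{O}_T$-flat. There is no real obstacle here; the only substantive step is the local verification that $\eta_{\mathcal{G}}$ is an isomorphism, and both conclusions are routine unravellings of the projection formula for an affine morphism.
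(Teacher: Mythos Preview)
Your argument is correct. For the second assertion it is essentially identical to the paper's: both of you observe that $f^*$ is exact by flatness, $f_*$ is exact by affineness, and conclude that the composite, hence $f_*(\mathcal{O}_S)\otimes_{\mathcal{O}_T}-$, is exact.

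For the first assertion the paper takes a slightly different, more categorical route. Rather than constructing a projection-formula map and verifying it on affine charts, the paper invokes the equivalence between $\mathrm{Qcoh}(S)$ and the category of quasi-coherent $f_*(\mathcal{O}_S)$-modules on $T$, under which $f^*$ becomes $f_*(\mathcal{O}_S)\otimes_{\mathcal{O}_T}-$ and $f_*$ becomes the forgetful functor; the isomorphism $f_*f^*(\mathcal{E})\cong f_*(\mathcal{O}_S)\otimes_{\mathcal{O}_T}\mathcal{E}$ is then immediate without any local check. Your approach is more hands-on and avoids citing that equivalence, at the cost of the affine-local verification; the paper's approach is cleaner once one accepts the equivalence, but that equivalence is itself proved by the same local computation you carry out. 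The two are thus closely related reformulations of the same underlying fact.
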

\begin{proof}
	The category $\mathrm{Qcoh}(S)$ is equivalent to the category of $f_*(\mathcal{O}_S)$-modules on $T$ which are quasi-coherent as $\mathcal{O}_T$-modules. Under this identification, the functors $f^*$ and $f_*$ are given as $f_*(\mathcal{O}_S) \otimes_{\mathcal{O}_T} -$ and the forgetful functor respectively. It follows that for every quasi-coherent sheaf $\mathcal{E}$ on $T$ the $\mathcal{O}_T$-module $f_* \circ f^*(\mathcal{E})$ is given by $f_* \mathcal{O}_S \otimes_{\mathcal{O}_T} \mathcal{E}$.
	
	If we assume $f$ affine and flat, the flatness of $f_{*}(\mathcal{O}_S)$ follows as both $f^*$ and $f_*$ are exact in this case, and therefore so is their composition. Thus, $f_*(\mathcal{O}_S) \otimes_{\mathcal{O}_T} -$ is exact, which by definition means that  $f_*(\mathcal{O}_S)$ is flat.
\end{proof}

\begin{cor}
	Assume that $X$ is a noetherian \emph{separated} scheme and let $i:U \to X$ denote the inclusion of an \emph{affine} open subscheme with closed complement $Z$. Then $\mathrm{K}(\mathrm{Inj}(X))_U$ is equivalent to $\mathrm{K}(\mathrm{Inj}(U))$. 
	\label{corinjres}
\end{cor}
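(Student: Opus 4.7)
The plan is to identify the smashing localization $\mathrm{K}(\mathrm{Inj}(X))_U$ with the essential image of the endofunctor $i_* \circ i^*$ on $\mathrm{K}(\mathrm{Inj}(X))$, and then invoke Lemma \ref{lmarestoopen} together with Remark \ref{remreslocfunc} to conclude.

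First I would observe that the inclusion $i: U \to X$ is affine: since $X$ is separated and $U$ is affine, the intersection $U \cap V$ is an affine open of $X$ for every affine open $V \subset X$, so $i^{-1}(V)$ is affine. Combined with the fact that open immersions are flat, Lemma \ref{lmaaffpullpush} yields that $i_*(\mathcal{O}_U)$ is a flat quasi-coherent sheaf on $X$ and that there is a natural isomorphism $i_* \circ i^*(-) \cong i_*(\mathcal{O}_U) \otimes_{\mathcal{O}_X} -$ of endofunctors of $\mathrm{Qcoh}(X)$, which extends termwise to the homotopy category of complexes of quasi-coherent sheaves.

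Next, since $i$ is affine we have $\mathbf{R}i_* = i_*$, so as objects of $\mathrm{D}(X)$ we get $L_Z(\mathcal{O}_X) = \mathbf{R}i_* i^* \mathcal{O}_X = i_*(\mathcal{O}_U)$, viewed as a complex concentrated in degree zero. Because $i_*(\mathcal{O}_U)$ is flat, it is its own $\mathrm{K}$-flat resolution, and hence the action $L_Z(\mathcal{O}_X) \ast C^\bullet$ on an object $C^\bullet \in \mathrm{K}(\mathrm{Inj}(X))$ is just the termwise tensor product $i_*(\mathcal{O}_U) \otimes_{\mathcal{O}_X} C^\bullet$, which by the previous paragraph is naturally isomorphic to $i_* i^* C^\bullet$. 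Therefore $\mathrm{K}(\mathrm{Inj}(X))_U$, defined as the essential image of $L_Z(\mathcal{O}_X) \ast -$, coincides with the essential image of $i_* \circ i^*$ on $\mathrm{K}(\mathrm{Inj}(X))$.

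Finally, by Remark \ref{remreslocfunc} the endofunctor $i_* \circ i^*$ realizes inside $\mathrm{K}(\mathrm{Inj}(X))$ the Bousfield localization with kernel $\mathrm{K}(\mathrm{Inj}(X))_Z$; hence its essential image is equivalent to the Verdier quotient $\mathrm{K}(\mathrm{Inj}(X))/\mathrm{K}(\mathrm{Inj}(X))_Z$, which in turn is equivalent to $\mathrm{K}(\mathrm{Inj}(U))$ via $i^*$ by Lemma \ref{lmarestoopen}. The main technical point is the identification of $L_Z(\mathcal{O}_X)$ with the honest sheaf $i_*(\mathcal{O}_U)$ sitting in degree zero; this is where both the separatedness of $X$ and the affineness of $U$ are essential, and it is what makes the abstract action through $\mathrm{K}$-flat resolutions collapse to the concrete pushforward-pullback functor already studied in \cite{krause-stablederived}.
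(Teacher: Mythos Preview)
Your proof is correct and follows essentially the same route as the paper: identify $L_Z(\mathcal{O}_X)$ with the flat sheaf $i_*(\mathcal{O}_U)$ using that $i$ is affine (hence $\mathbf{R}i_*=i_*$), so the action collapses to the ordinary tensor product, which via Lemma~\ref{lmaaffpullpush} (the paper phrases it as the projection formula) is $i_*i^*(-)$; then finish with Lemma~\ref{lmarestoopen} and Remark~\ref{remreslocfunc}.
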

\begin{proof}
	By definition, the category $\mathrm{K}(\mathrm{Inj}(X))_U$ is given as the essential image of the functor~$L_Z(\mathcal{O}_X) \ast -$ with~$L_Z: \mathrm{D}(X) \to \mathrm{D}(X)$. But $L_Z$ can be described as the functor $\mathbf{R}i_* \circ i^*$, and thus $\mathrm{K}(\mathrm{Inj}(X))_U$ is the essential image of the functor $\mathbf{R}i_* \circ i^*(\mathcal{O}_X) \ast - = i_*i^*(\mathcal{O}_X) \otimes -$, where we don't need to derive $i_*$ because $X$ is separated (which implies that $i$ is affine by \cite{stacks-project}*{Tag 01SG} and therefore $i_*$ exact) and $\ast = \otimes$ in this case, since $i_*i^*(\mathcal{O}_X) = i_*(\mathcal{O}_U)$ is a flat sheaf on $X$ by Lemma~\ref{lmaaffpullpush}. The projection formula yields the isomorphism of functors $i_*i^*(\mathcal{O}_X) \otimes - \cong i_*i^*(-)$ and by Lemma \ref{lmarestoopen} and Remark \ref{remreslocfunc} it follows that 
	\[\im(L_Z(\mathcal{O}_X) \ast -) = \im(i_*i^*(-)) = \mathrm{K}(\mathrm{Inj}(U))~.\]
\end{proof}

Let us check now that the notion of support obtained from the action of $\mathrm{D}(X)$ on $\mathrm{K}(\mathrm{Inj}(X))$ is a familiar one.
\begin{prop}
	Assume that $X$ is a noetherian separated scheme. Let $\mathcal{E}^{\bullet}$ be a cohomologically bounded complex of sheaves with coherent cohomology in $\mathrm{K}(\mathrm{Inj}(X))$. Then $\supp(\mathcal{E}^{\bullet}) = \mathrm{supph}(\mathcal{E}^{\bullet})$, where $\mathrm{supph}(\mathcal{E}^{\bullet})$ denotes the cohomological support of the chain complex $\mathcal{E}^{\bullet}$.
	\label{proposition-suppagrees}
\end{prop}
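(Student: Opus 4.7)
The plan is to reduce the computation of $\supp(\mathcal{E}^\bullet)$ to the analogous statement for the self-action of $\mathrm{D}(X)$ on itself, where the agreement with cohomological support is a standard fact (essentially due to \cite{balfavibig}) provable by derived Nakayama.

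Since $\mathcal{E}^\bullet$ is cohomologically bounded with coherent cohomology sheaves, it is compact in $\mathrm{K}(\mathrm{Inj}(X))$, so the equivalence $\mathrm{K}(\mathrm{Inj}(X))^c \cong \Db(\mathrm{Coh}(X))$ yields a bounded complex $\mathcal{F}^\bullet \in \Db(\mathrm{Coh}(X))$ together with a quasi-isomorphism $\mathcal{F}^\bullet \to \mathcal{E}^\bullet$ realising $\mathcal{E}^\bullet$ as a $\mathrm{K}$-injective resolution $i(\mathcal{F}^\bullet)$; in particular $\mathrm{supph}(\mathcal{E}^\bullet) = \mathrm{supph}(\mathcal{F}^\bullet)$. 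The next step is to establish the compatibility
\[\Gamma_x \mathcal{E}^\bullet \;\simeq\; i\bigl(\Gamma_x \mathbb{I} \otimes^{\mathbf{L}} \mathcal{F}^\bullet\bigr)\]
in $\mathrm{K}(\mathrm{Inj}(X))$. For any $\mathrm{K}$-flat resolution $P^\bullet \to \Gamma_x \mathbb{I}$, the complex $P^\bullet \otimes \mathcal{E}^\bullet$ represents the action $\Gamma_x \mathbb{I} \ast \mathcal{E}^\bullet$ by the construction of Proposition \ref{propdxactsonkinjx}; by \cite{murfet-thesis}*{Lemma 8.2} (which uses the noetherianity of $X$) it is already a complex of injective quasi-coherent sheaves, and it is quasi-isomorphic to $P^\bullet \otimes \mathcal{F}^\bullet$, a representative of the derived tensor product $\Gamma_x \mathbb{I} \otimes^{\mathbf{L}} \mathcal{F}^\bullet$.

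Combining this with the full faithfulness of the $\mathrm{K}$-injective resolution functor $i\colon \mathrm{D}(X) \to \mathrm{K}(\mathrm{Inj}(X))$ (it is the right adjoint to the canonical localisation $\mathrm{K}(\mathrm{Inj}(X)) \to \mathrm{D}(X)$), we conclude that $\Gamma_x \mathcal{E}^\bullet = 0$ in $\mathrm{K}(\mathrm{Inj}(X))$ if and only if $\Gamma_x \mathbb{I} \otimes^{\mathbf{L}} \mathcal{F}^\bullet = 0$ in $\mathrm{D}(X)$. The problem is then reduced to the self-action of $\mathrm{D}(X)$, which is handled by passing to stalks: $\Gamma_x \mathbb{I} \otimes^{\mathbf{L}} \mathcal{F}^\bullet$ detects the non-vanishing of $\mathcal{F}^\bullet_x \otimes^{\mathbf{L}}_{\mathcal{O}_{X,x}} \kappa(x)$, which by derived Nakayama is nonzero exactly when the stalk complex $\mathcal{F}^\bullet_x$ is nonzero, i.e.\ when $x \in \mathrm{supph}(\mathcal{F}^\bullet)$.

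The main obstacle is making the displayed compatibility rigorous: the action of Proposition \ref{propdxactsonkinjx} is constructed through the monoidal embedding $\mathrm{D}(X) \hookrightarrow \mathrm{K_m}(\mathrm{Proj}(X))$, so one must unwind this construction and check carefully that the naive tensor product $P^\bullet \otimes i(\mathcal{F}^\bullet)$ truly computes the action inside $\mathrm{K}(\mathrm{Inj}(X))$ and not merely after passing to $\mathrm{D}(X)$---bearing in mind that vanishing in $\mathrm{K}(\mathrm{Inj}(X))$ (contractibility) is in general strictly stronger than vanishing in $\mathrm{D}(X)$ (acyclicity) for singular $X$.
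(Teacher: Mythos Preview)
Your route differs from the paper's. Rather than reducing to the self-action of $\mathrm{D}(X)$, the paper passes to an affine open cover via the localisation formula $\supp_{\mathcal{T}}(A)=\bigcup_i \supp_{\mathcal{T}(U_i)}(L_{Z_i}\mathbb{I}\ast A)$ from \cite{stevenson}, then on each $\mathrm{Spec}(A_i)$ identifies $\Gamma_{\mathfrak{p}}\mathbb{I}$ with the \emph{bounded} flat complex $K_\infty(\mathfrak p)\otimes (A_i)_{\mathfrak p}$ (stable Koszul times localisation), so that $\Gamma_{\mathfrak p}\ast E^\bullet\cong\mathbf R\Gamma_{\mathfrak p}(E^\bullet_{\mathfrak p})$, and finishes by quoting the identification of local-cohomology support with $\mathrm{supph}$ from \cite{bik-localcohomsupp}.

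The obstacle you flag in your last paragraph is a genuine gap, and your argument does not close it. Showing that $P^\bullet\otimes\mathcal E^\bullet$ is a complex of injectives quasi-isomorphic to $P^\bullet\otimes\mathcal F^\bullet$ only yields $Q(\Gamma_x\mathcal E^\bullet)\cong\Gamma_x\mathbb{I}\otimes^{\mathbf L}\mathcal F^\bullet$ in $\mathrm D(X)$; for the displayed isomorphism in $\mathrm K(\mathrm{Inj}(X))$ you need $P^\bullet\otimes\mathcal E^\bullet$ to be K-injective, and for an unbounded K-flat $P^\bullet$ that is not automatic. The paper's affine reduction is precisely what supplies the missing control: the stable Koszul model is bounded, so tensoring it with a bounded-below injective resolution stays bounded below, hence K-injective, and then ``acyclic $\Rightarrow$ contractible'' is immediate. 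Without producing a bounded K-flat model for $\Gamma_x\mathbb{I}$ over all of $X$, your reduction does not go through as written.

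Two smaller points. Your opening claim that ``cohomologically bounded with coherent cohomology'' implies compact is false for singular $X$: any nonzero acyclic object of $\mathrm K(\mathrm{Inj}(X))$ satisfies the hypothesis, has empty $\mathrm{supph}$, and by local-to-global nonempty $\supp$; so one really needs $\mathcal E^\bullet$ compact (equivalently K-injective) as a hypothesis rather than a consequence. And in your final step, $\Gamma_x\mathbb{I}\otimes^{\mathbf L}(-)$ is local cohomology after localisation, not $\kappa(x)\otimes^{\mathbf L}(-)$; that these give the same support on $\Db(\mathrm{Coh}(X))$ is the big-versus-small-support comparison, which is additional input beyond derived Nakayama.
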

\begin{proof}
	First note that for any action $\ast$ of a tensor triangulated category $\mathcal{T}$ on a triangulated category $\mathcal{K}$ and quasi-compact open $U \subset \Spc(\mathcal{T}^c)$ with closed complement $Z$, we have an induced action $\ast_U$ of $\mathcal{T}(U) := L_Z(\mathcal{T})$ on $\mathcal{K}(U) := L_Z(\mathbb{I}) \ast \mathcal{K}$ by \cite[Proposition 8.5]{stevenson}, which makes the following diagram commute (and is, in fact, defined by this property): 
	\begin{equation}
	\xymatrix@C=7em{
		\mathcal{T} \times \mathcal{K} \ar[r]^-{L_Z(-) \times L_Z(\mathbb{I}) \ast -} \ar[d]^{\ast}& \mathcal{T}(U) \times \mathcal{K}(U) \ar[d]^{\ast_U} \\
		\mathcal{K} \ar[r]^-{L_Z(\mathbb{I}) \ast -} & \mathcal{K}(U)
	}
	\label{eqninducedactiondiag}
	\end{equation}
	
	Identifying $\Spc(\mathcal{T}(U)^c)$ with $U$ and taking a cover $(U_i)_{i \in I}$ of $\Spc(\mathcal{T}^c)$ consisting of quasi-compact opens one then obtains the formula
	\begin{equation}
	\supp_{\mathcal{T}}(A) = \bigcup_{i} \supp_{\mathcal{T}(U_i)} L_{Z_i}(\mathbb{I}) \ast A
	\label{eqnsupplocal}
	\end{equation}
	where $Z_i$ denotes the complement of $U_i$ in $\Spc(\mathcal{T}^c)$ (see \cite[Remark 8.9]{stevenson}).
	
	In our specific situation, this is used as follows: we take an affine open cover of  $(U_i)_{i \in I}$ of $\Spc(\mathrm{D}(X)^c) = X$ and obtain for every open $U_i \subset X$ with complement $Z_i$ an action of the category $\mathrm{D}(X)(U_i) = L_{Z_i}(\mathrm{D}(X)) \cong \mathrm{D}(U_i)$ on $\mathrm{K}(\mathrm{Inj}(U_i))= L_{Z}(\mathbb{I}) \ast \mathrm{K}(\mathrm{Inj}(U_i)) \cong \mathrm{K}(\mathrm{Inj}(U_i))$. From the commutativity of diagram (\ref{eqninducedactiondiag}) we see that this is just the usual action obtained by taking $\mathrm{K}$-flat resolutions and taking tensor products. Using the identities (\ref{eqnsupplocal}) and
	\[\mathrm{supph}_X(F^{\bullet}) = \bigcup_{i} \mathrm{supph}_{U_i}(F^{\bullet}|_{U_i})\]
	it suffices to show that for  each affine patch $U_i = \mathrm{Spec}(A_i)$ and every homologically bounded complex $E^{\bullet}$ of injective $A_i$-modules with finitely generated total cohomology, $\supp_{\mathrm{D}(A_i)}(E^{\bullet}) = \mathrm{supph}(E^{\bullet})$. By definition of the action of $\mathrm{D}(U_i)$ on $\mathrm{K}(\mathrm{Inj}(U_i))$, the set $\supp_{\mathrm{D}(A_i)}(E^{\bullet})$ is given as
	\[\lbrace \mathfrak{p} \in \mathrm{Spec}(A_i):  \Gamma_{\mathfrak{p}} \otimes^L E^{\bullet} \neq 0 \rbrace~.\]
	Using \cite[Proposition 3.9]{stevenson-singularity}, we see that $\Gamma_{\mathfrak{p}}$ is given as $\mathrm{K}_{\infty}(\mathfrak{p})\otimes (A_i)_{\mathfrak{p}}$, where $\mathrm{K}_{\infty}(\mathfrak{p})$ is the stable Kozsul complex at $\mathfrak{p}$. Hence, if $\mathbf{R}\Gamma_{\mathfrak{p}}$ denotes the local cohomology functor at $\mathfrak{p}$, we have
	\[\Gamma_{\mathfrak{p}} \otimes^L E^{\bullet} \cong \mathbf{R}\Gamma_{\mathfrak{p}}(E^{\bullet}_{\mathfrak{p}})\]
	and it follows from \cite{bik-localcohomsupp}*{Section 9} that 
	\[\lbrace \mathfrak{p} \in \mathrm{Spec}(A_i):  \mathbf{R}\Gamma_{\mathfrak{p}}(E^{\bullet}_{\mathfrak{p}}) \neq 0 \rbrace = \mathrm{supph}(E^{\bullet})\]
	 when $\mathrm{H}^*(E^{\bullet})$ is bounded and finitely generated.

\end{proof}

\begin{cor}
	Let $X$ be a noetherian separated scheme and $p \in \mathbb{Z}$. Then the category $(\mathrm{K})^{c}_{(p)}$ is equivalent to the full subcategory $\Db(X)_{(p)} \subset \Db(X)$ consisting of those complexes $E^{\bullet}$ such that $\dim(\mathrm{supph}(E^{\bullet})) \leq p$.
\end{cor}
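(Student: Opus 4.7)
The plan is to combine three ingredients that are already in place: the identification $\mathrm{K}(\mathrm{Inj}(X))^c \cong \Db(\mathrm{Coh}(X)) = \Db(X)$ from \cite{krause-stablederived}, the description $(\mathcal{K}_{(p)})^c = (\mathcal{K}^c)_{(p)}$ from Proposition \ref{propcompactsubcatsame}, and the coincidence of the tensor-triangular support with the cohomological support on cohomologically bounded complexes with coherent cohomology from Proposition \ref{proposition-suppagrees}.

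First I would invoke Lemma \ref{lmadxlocglob} to observe that the action of $\mathrm{D}(X)$ on $\mathrm{K}(\mathrm{Inj}(X))$ satisfies the local-to-global principle, so that Proposition \ref{propcompactsubcatsame} applies and gives
\[\bigl(\mathrm{K}(\mathrm{Inj}(X))_{(p)}\bigr)^c = \bigl\{a \in \mathrm{K}(\mathrm{Inj}(X))^c : \supp(a) \subset V_{\leq p}\bigr\}.\]
Next I would transport this identity across the equivalence $\mathrm{K}(\mathrm{Inj}(X))^c \cong \Db(X)$, which sends a bounded complex of coherent sheaves to a quasi-isomorphic complex of injective quasi-coherent sheaves. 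In particular, every compact object of $\mathrm{K}(\mathrm{Inj}(X))$ is represented by a cohomologically bounded complex with coherent cohomology, so Proposition \ref{proposition-suppagrees} applies to it and identifies $\supp(E^{\bullet})$ with $\mathrm{supph}(E^{\bullet})$.

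Finally, by definition of $V_{\leq p}$ as $\{x \in X : \dim(x) \leq p\}$, the containment $\mathrm{supph}(E^{\bullet}) \subset V_{\leq p}$ is equivalent to $\dim(\mathrm{supph}(E^{\bullet})) \leq p$, which is precisely the condition defining $\Db(X)_{(p)}$. Assembling these identifications yields the desired equivalence. There is no genuine obstacle to overcome here: the argument is essentially bookkeeping, since Lemma \ref{lmadxlocglob}, Proposition \ref{propcompactsubcatsame}, and Proposition \ref{proposition-suppagrees} together do all the work, and the only thing to verify is that the equivalence $\mathrm{K}(\mathrm{Inj}(X))^c \cong \Db(X)$ intertwines the two notions of support, which is exactly the content of Proposition \ref{proposition-suppagrees}.
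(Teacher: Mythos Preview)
Your proposal is correct and follows essentially the same approach as the paper: identify $\mathrm{K}(\mathrm{Inj}(X))^c$ with $\Db(X)$, then use Proposition~\ref{proposition-suppagrees} to match $\supp$ with $\mathrm{supph}$ on compact objects, so that the support condition defining $(\mathcal{K}^c)_{(p)}$ becomes exactly the condition defining $\Db(X)_{(p)}$. Your version is slightly more explicit in invoking Lemma~\ref{lmadxlocglob} and Proposition~\ref{propcompactsubcatsame} to justify the identity $(\mathcal{K}_{(p)})^c = (\mathcal{K}^c)_{(p)}$, which the paper uses tacitly via its standing notational convention.
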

\begin{proof}
	The subcategory of compact objects of $\mathrm{K}(\mathrm{Inj}(X))$ can be identified with the full subcategory of those complexes that come from injective resolutions of objects in $\Db(X)$, and thus they are cohomologically bounded and have coherent cohomology. By Proposition \ref{proposition-suppagrees}, it follows that for these complexes, $\supp$ coincides with $\mathrm{supph}$, and so $\mathrm{K}(\mathrm{Inj}(X))^{c}_{(p)}$ consists of those complexes in $\mathrm{K}(\mathrm{Inj}(X))^{c}$ whose cohomology is supported in dimension~$\leq p$. These exactly correspond to the complexes of $\Db(X)$ whose cohomology is supported in dimension~$\leq p$.
\end{proof}

From this, we can compute $\Cyc^{\Delta}_{p}(\mathrm{D}(X),\mathrm{K}(\mathrm{Inj}(X)))$ and~$\Chow^{\Delta}_{p}(\mathrm{D}(X),\mathrm{K}(\mathrm{Inj}(X)))$.
\begin{thm}
	Let $X$ be a separated scheme of finite type over a field. Endow $\Spc(\mathrm{D}(X)^c) = X$ with the opposite of the Krull codimension as a dimension function. Then for all $p \leq 0$, we have isomorphisms
	\begin{align*}
	\Cyc^{\Delta}_{p}(\mathrm{D}(X),\mathrm{K}(\mathrm{Inj}(X))) &\cong \Cyc_{-p}(X) \\
	\Chow^{\Delta}_{p}(\mathrm{D}(X),\mathrm{K}(\mathrm{Inj}(X))) &\cong \Chow_{-p}(X)~.
	\end{align*}
	\label{thmsingagreement}
\end{thm}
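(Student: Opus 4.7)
My plan is to peel the statement apart level by level: first decompose the relative cycle group into a direct sum indexed by points of $X$ of the correct codimension, then compute each summand as $\mathbb{Z}$ by a devissage argument, and finally match the tensor triangular rational equivalence with Fulton's principal-cycle equivalence.

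First, I would combine Remark \ref{remcompactidcomp}, Proposition \ref{stevensondecomposition} and Lemma \ref{lmacompprod} (the local-to-global principle being automatic by Lemma \ref{lmadxlocglob}) to obtain
\[\Cyc^{\Delta}_p(\mathrm{D}(X),\mathrm{K}(\mathrm{Inj}(X))) \;=\; \Kzero\bigl((\Gamma_p\mathrm{K}(\mathrm{Inj}(X)))^c\bigr) \;\cong\; \bigoplus_{x \in V_p} \Kzero\bigl((\Gamma_x\mathrm{K}(\mathrm{Inj}(X)))^c\bigr).\]
Because the dimension function is $-\codim_{\mathrm{Krull}}$, the index set $V_p$ consists exactly of the points of $X$ of Krull codimension $-p$, i.e.\ the generic points of the irreducible closed subvarieties which index $\Cyc_{-p}(X)$.

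Next, I would identify each local summand with $\mathbb{Z}$. The identification $\mathrm{K}(\mathrm{Inj}(X))^c \cong \Db(\mathrm{Coh}(X))$ together with Proposition \ref{proposition-suppagrees} says that on compact objects the $\mathrm{D}(X)$-support agrees with the ordinary cohomological support. Choosing an affine open $U \subset X$ containing $x$ and using Corollary \ref{corinjres} to identify $\mathrm{K}(\mathrm{Inj}(X))_U$ with $\mathrm{K}(\mathrm{Inj}(U))$ reduces the computation to the affine case $U = \mathrm{Spec}(A)$, and a further passage to the stalk $\mathcal{O}_{X,x} = A_{\mathfrak{p}}$ identifies $(\Gamma_x\mathrm{K}(\mathrm{Inj}(X)))^c$ with bounded complexes of finite-length $\mathcal{O}_{X,x}$-modules (idempotent-completed). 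By Quillen's classical devissage, $\Kzero$ of this category is $\mathbb{Z}$, generated by the residue field class $[k(x)]$. Summing over $x \in V_p$ yields
\[\Cyc^{\Delta}_p(\mathrm{D}(X),\mathrm{K}(\mathrm{Inj}(X))) \;\cong\; \bigoplus_{x \in V_p}\mathbb{Z} \;\cong\; \Cyc_{-p}(X).\]

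For the Chow group statement I would verify that the subgroup $j \circ q(\ker(i))$ defining tensor triangular rational equivalence corresponds to Fulton's principal-cycle equivalence under the identification above. A class in $\ker(i)$ arises from a pair of compact objects in $\mathcal{K}^c_{(p)}$ that become equivalent one filtration step higher; by the Bass--Quillen--Gersten analysis of the coniveau filtration its image under $q$ is a sum of boundaries $d_1([u])$ for units $u \in k(W)^\times$ of the codimension-$(-p-1)$ subvarieties $W$, which on the $\Kzero$-row is exactly Fulton's divisor map. The induced quotient therefore agrees with $\Chow_{-p}(X)$.

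The main obstacle I anticipate is the identification of $(\Gamma_x\mathrm{K}(\mathrm{Inj}(X)))^c$: the subcategory $\Gamma_x\mathcal{K}$ is \emph{not} of the form $\tau_{\mathcal{K}}(V)$ for a specialization-closed $V \subset \Spc(\mathcal{T}^c)$, so Proposition \ref{propcompactsubcatsame} does not apply directly and one must carefully combine the local-to-global principle with the restriction functors of Lemma \ref{lmarestoopen} and Corollary \ref{corinjres} to pin down the compacts before devissage can be invoked. Once this is done, the cycle statement follows at once, and the rational-equivalence comparison runs along essentially the same lines as the nonsingular argument in \cite{kleinchow}.
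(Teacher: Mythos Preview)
Your proposal is correct in outline but takes a more circuitous route than the paper does. The paper's proof is essentially a one-line reduction: the preceding corollary identifies $\mathrm{K}(\mathrm{Inj}(X))^c_{(p)}$ with $\Db(X)_{(p)}$ (via Proposition~\ref{proposition-suppagrees} and the equivalence $\mathrm{K}(\mathrm{Inj}(X))^c \simeq \Db(\mathrm{Coh}(X))$), so the relative cycle and Chow groups are literally those built from the filtration $\Db(X)_{(p)}$ of $\Db(X)$; the paper then simply invokes \cite{kleinchow}*{Section 4}, where the identifications $\Kzero\bigl((\Db(X)_{(p)}/\Db(X)_{(p-1)})^{\natural}\bigr) \cong \Cyc_{-p}(X)$ and $j\circ q(\ker i) = \{\text{cycles rationally equivalent to } 0\}$ are already proved for possibly singular $X$.

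Your approach instead passes through the pointwise decomposition $\coprod_{x \in V_p} (\Gamma_x\mathcal{K})^c$ and computes each summand by an affine reduction plus d\'evissage at the stalk. This is essentially a re-derivation of the content cited from \cite{kleinchow}, and it forces you to confront exactly the obstacle you flag: pinning down $(\Gamma_x\mathcal{K})^c$ when $\{x\}$ is not specialization-closed. The paper avoids this entirely by never leaving the global quotient $\Db(X)_{(p)}/\Db(X)_{(p-1)}$. Your route would work, and has the virtue of being self-contained, but it is longer and the delicate step you identify is genuinely delicate; the paper's route is shorter because the hard work was already done elsewhere for $\Db(X)$, and the only new input needed here is that the support theory on $\mathrm{K}(\mathrm{Inj}(X))^c$ matches the cohomological support on $\Db(X)$.
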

\begin{proof}
	In \cite{kleinchow}*{Section 4} it is proved that $\Kzero(\Db(X)_{(p)}/\Db(X)_{(p-1)}(X)) \cong \Cyc_{-p}(X)$ and $j \circ q (\ker(i))$ (see Definition \ref{dfnrelcycchowgroups}) equals the subgroup of cycles rationally equivalent to zero under this isomorphism.
\end{proof}

\section{Restriction to open subsets} 
Let $U \subset \Spc(\mathcal{T}^c)$ be an open subset with complement $Z$. If we denote by $\mathcal{T}_Z$ the smashing ideal in $\mathcal{T}$ generated by the subcategory $(\mathcal{T}^c)_Z \subset \mathcal{T}^c$ of all objects with support contained in $Z$, then the quotient category $\mathcal{T}_U := \mathcal{T}/\mathcal{T}_Z$ is a tensor triangulated category satisfying all the assumptions made at the beginning of this chapter, whose spectrum $\Spc(\mathcal{T}_U^c)$ can be identified with $U$. Furthermore, if $\mathcal{T}$ acts on $\mathcal{K}$, we obtain an induced action on the category $\mathcal{K}_U := L_Z(\mathbb{I}) \ast \mathcal{K}$ (see \cite{stevenson}*{Proposition 8.5}). We will show that the localization functor induces surjective maps 
\[\Cyc^{\Delta}_p(\mathcal{T},\mathcal{K}) \to \Cyc^{\Delta}_p(\mathcal{T}_U,\mathcal{K}_U)\]
and
\[\Chow^{\Delta}_p(\mathcal{T},\mathcal{K}) \to \Chow^{\Delta}_p(\mathcal{T}_U,\mathcal{K}_U)\]
for all $p$. The kernels of these maps can be described with the help of the relative cycle and Chow groups that we introduced in the previous section.

\begin{lma}
	Let $\mathcal{T}$ be a compactly-rigidly generated  tensor triangulated category (see Definition \ref{dfncompriggen}) such that $\Spc(\mathcal{T}^c)$ is a noetherian topological space. Let $U \subset \Spc(\mathcal{T}^c)$ be an open subset with complement $Z$. Then $\mathcal{T}_U$ is a compactly-rigidly generated tensor triangulated category with $\Spc(\mathcal{T}_U^c) \cong U$, which is a noetherian topological space as well. Furthermore, the action $\ast$ induces an action $\ast_U$ of $\mathcal{T}_U$ on $\mathcal{K}_U := L_Z(\mathbb{I}) \ast \mathcal{K}$, and if the local-to-global principle holds for the action $\ast$, it also holds for the action $\ast_U$. 
	\label{lmaresaction}
\end{lma}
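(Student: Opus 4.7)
The plan is to verify the four pieces in turn: compact-rigid generation of $\mathcal{T}_U$, identification of its spectrum, induction of the action, and transfer of the local-to-global principle. The underlying philosophy is that $\mathcal{T}_U = \mathcal{T}/\mathcal{T}_Z$ may be realized as the essential image of $L_Z$ in $\mathcal{T}$, and most structure is inherited by restriction along this idempotent.

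First I would argue that $\mathcal{T}_Z$ is a compactly generated smashing subcategory: it is generated by the thick $\otimes$-ideal $(\mathcal{T}^c)_Z$ of compact objects, and such ideals generate smashing ideals in $\mathcal{T}$ as recalled in Section~\ref{sectionprelim}. Neeman's localization theorem then gives that $\mathcal{T}_U$ is compactly generated and that $\mathcal{T}_U^c$ is the idempotent completion of $\mathcal{T}^c/(\mathcal{T}^c)_Z$. The tensor product on $\mathcal{T}$ descends to $\mathcal{T}_U$ since $\mathcal{T}_Z$ is a $\otimes$-ideal, and the new unit is $L_Z(\mathbb{I})$, which is compact because it is the image of $\mathbb{I}$ under the quotient functor. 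Rigidity of $\mathcal{T}_U^c$ follows: compact-rigid objects are closed under tensoring, taking summands and taking duals, so these properties are inherited by the compacts in the quotient. For the spectrum, I would invoke Balmer's identification of open subsets of $\Spc(\mathcal{T}^c)$ with quotients by thick $\otimes$-ideals (see \cite{balmer2005spectrum}): the canonical map $\Spc(\mathcal{T}_U^c) \to \Spc(\mathcal{T}^c)$ is a homeomorphism onto $U$, and since $U$ is open in a noetherian space it is itself noetherian.

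For the existence of the induced action $\ast_U$ of $\mathcal{T}_U$ on $\mathcal{K}_U$, I would simply cite \cite{stevenson}*{Proposition 8.5}, where precisely this construction is carried out: the action $\ast_U$ is characterized by the commutativity of diagram~(\ref{eqninducedactiondiag}) (reproduced in the proof of Proposition~\ref{proposition-suppagrees}). That $\mathcal{K}_U$ is compactly generated follows again from Neeman's theorem and the fact that $\mathcal{K}_U$ is the Verdier quotient of $\mathcal{K}$ by the compactly generated subcategory $\tau_{\mathcal{K}}(Z)$ (Proposition~\ref{propcompactsubcatsame}).

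The heart of the argument, and the main obstacle, is transferring the local-to-global principle. For a point $x \in U$, I would first check the compatibility of the residue constructions: the closure $\overline{\{x\}}$ and the set $Y_x$ computed inside $U = \Spc(\mathcal{T}_U^c)$ agree with the intersections with $U$ of the corresponding sets in $\Spc(\mathcal{T}^c)$, so $\Gamma_x^{\mathcal{T}_U}(L_Z(\mathbb{I})) \cong L_Z(\mathbb{I}) \otimes \Gamma_x^{\mathcal{T}}(\mathbb{I})$, and consequently, for any $a \in \mathcal{K}_U$ (viewed also as an object of $\mathcal{K}$), $\Gamma_x^{\mathcal{T}_U} a \cong \Gamma_x^{\mathcal{T}} a$. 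Next, since $a \in \mathcal{K}_U = L_Z(\mathbb{I}) \ast \mathcal{K}$ one has $\supp_\mathcal{T}(a) \subset U$ by Proposition~\ref{propsuppprops}, so $\Gamma_x^{\mathcal{T}} a = 0$ for $x \in Z$. Applying the local-to-global principle for $\ast$ to $a$ therefore yields
\[\langle a \rangle_{\ast} = \langle \Gamma_x^{\mathcal{T}} a \mid x \in \Spc(\mathcal{T}^c) \rangle_{\ast} = \langle \Gamma_x^{\mathcal{T}_U} a \mid x \in U \rangle_{\ast}.\]
The final step is to pass from $\langle - \rangle_{\ast}$ to $\langle - \rangle_{\ast_U}$: since $\mathcal{K}_U$ is a localizing subcategory of $\mathcal{K}$ stable under $L_Z(\mathbb{I}) \ast -$, and since the $\mathcal{T}_U$-submodule structure on objects of $\mathcal{K}_U$ is obtained from the $\mathcal{T}$-submodule structure via the quotient $\mathcal{T} \to \mathcal{T}_U$, the two submodules agree, giving the desired equality $\langle a \rangle_{\ast_U} = \langle \Gamma_x^{\mathcal{T}_U} a \mid x \in U\rangle_{\ast_U}$.
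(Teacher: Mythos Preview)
Your proposal is correct and follows essentially the same route as the paper. The paper is terser on the first three points---it simply cites \cite{stevenson}*{Section 8} for compact-rigid generation of $\mathcal{T}_U$ and the induced action, and \cite{krauseloc}*{Theorem 5.6.1} together with \cite{stacks-project}*{Tag 0052} for the spectrum and noetherianity---while your local-to-global argument matches the paper's almost exactly: the paper also reduces to showing $\langle a\rangle_\ast=\langle\Gamma_x a\mid x\in U\rangle_\ast$ by observing that any localizing $\mathcal{T}_U$-submodule of $\mathcal{K}_U$ is automatically a localizing $\mathcal{T}$-submodule of $\mathcal{K}$ (via $s\ast a\cong (s\otimes L_Z(\mathbb{I}))\ast a$), and then uses $\Gamma_x(\mathbb{I})\otimes L_Z(\mathbb{I})=0$ for $x\notin U$ (citing \cite{stevenson}*{Proposition 8.3}) to discard the contributions from $Z$.
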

\begin{proof}
	The category $\mathcal{T}_U$ is compactly-rigidly generated and has an action on $\mathcal{K}_U$ by \cite{stevenson}*{Section 8}. Furthermore $\Spc(\mathcal{T}_U^c) \cong U$, since $\mathcal{T}_U^c = (\mathcal{T}_c/(\mathcal{T}_c)_Z)^{\natural}$ by \cite{krauseloc}*{Theorem 5.6.1 }. The space $U$ is noetherian as it is a subspace of a noetherian topological space by \cite{stacks-project}*{Tag 0052}. For the last statement, note that any localizing $\mathcal{T}_U$-submodule $\mathcal{M} \subset \mathcal{K}_U$ is a localizing $\mathcal{T}$-submodule of $\mathcal{K}$ as well. Indeed, for objects $a \in \mathcal{M}, s \in \mathcal{T}$, we have $s \ast a \cong s \ast (L_Z s) \cong (s \otimes L_Z(\mathbb{I})) \ast A \in \mathcal{M}$ since $s \otimes L_Z(\mathbb{I}) \in \mathcal{T}_U$. Thus it suffices to show that for $a \in \mathcal{K}_U$, we have that $\langle a \rangle_{\ast} = \langle \Gamma_x(\mathbb{I}) \ast a| x \in \Spc(\mathcal{T}_U^c) \rangle_{\ast}$, where we interpret $\langle - \rangle_{\ast}$ as the smallest localizing $\mathcal{T}$-submodule of $\mathcal{K}$ containing~$-$. Then we have
	\[ \langle a \rangle_{\ast} = \langle \Gamma_x a | x \in \Spc(\mathcal{T}^c) \rangle_{\ast}\]
	by the local-to-global principle for the action of $\mathcal{T}$ on $\mathcal{K}$. But $\Gamma_x(\mathbb{I}) \otimes L_Z(\mathbb{I}) = 0$ if $x \notin U$ and  $\Gamma_x(\mathbb{I})  \otimes L_Z(\mathbb{I}) \cong \Gamma_x(\mathbb{I}) $ if $x \in U$ (see \cite{stevenson}*{Proposition 8.3}), so
	\begin{align*}
	\langle a \rangle_{\ast} = \langle \Gamma_x a | x \in \Spc(\mathcal{T}^c) \rangle_{\ast} & = \langle (\Gamma_x(\mathbb{I}) \otimes L_Z(\mathbb{I})) \ast a | x \in \Spc(\mathcal{T}^c) \rangle_{\ast} \\
	& = \langle \Gamma_x a | x \in U \rangle_{\ast}\\
	& = \langle \Gamma_x a | x \in  \Spc(\mathcal{T}_U^c)\rangle_{\ast}~,
	\end{align*}
	where the last line follows from the homeomorphism $\Spc(\mathcal{T}_U^c) \cong U$. This finishes the proof.
\end{proof}

For the rest of the section, we assume that $\mathcal{T}$ is a tensor triangulated category in the sense of Convention \ref{convbigcond}, acting on $\mathcal{K}$, and that the local-to-global principle holds for this action. For any open subset $U \subset \Spc(\mathcal{T}^c)$, we will equip  $\mathcal{T}_U^c$ with the dimension function $\dim|_U$ obtained as the restriction to $U$ of the dimension function $\dim$ on $\mathcal{T}^c$. More precisely, if $\mathrm{inc}_U$ denotes the inclusion $U \hookrightarrow \Spc(\mathcal{T}^c)$, we set
\begin{align*}
\dim|_U:~ U = \Spc(\mathcal{T}_U^c) &\to \mathbb{Z} \cup \lbrace \pm \infty \rbrace\\
P &\mapsto \dim(\mathrm{inc}_U(P))~.
\end{align*}

\begin{rem}
	We warn the reader that the dimension function $\dim|_U$ may not always be the expected one. For example, if $R$ is a discrete valuation ring, $\mathcal{T} = \mathrm{D}(R)$ then $\Spc(\mathcal{T}^c) = \Spc(\Dperf(R)) = \mathrm{Spec}(R)$ (see Section \ref{sectionprelim}) and $\mathrm{Spec}(R) = \lbrace (0), \mathfrak{m} \rbrace$ , with $\mathfrak{m}$ the unique closed point and $(0)$ the generic point of the space. If we equip $\mathrm{Spec}(R)$ with the Krull dimension as a dimension function and let $U:= \lbrace (0) \rbrace$, then the space $U$ has Krull dimension $0$, but its single point satisfies~$\dim|_U((0)) = 1$! Note that this discrepancy would not have occurred if we had chosen the oppposite of the Krull codimension as a dimension function instead. This is true in general (see \cite{kleinintersection}*{Lemma 2.18}).
\end{rem}

With this convention, we obtain:
\begin{lma}
	The localization functor $L_Z(\mathbb{I}) \ast -: \mathcal{K} \to \mathcal{K}_{U}$ induces group homomorphisms
	\[l_p:\Cyc^{\Delta}_p(\mathcal{T},\mathcal{K}) \to \Cyc^{\Delta}_p(\mathcal{T}_U,\mathcal{K}_U)\]
	and
	\[\ell_p:\Chow^{\Delta}_p(\mathcal{T},\mathcal{K}) \to \Chow^{\Delta}_p(\mathcal{T}_U,\mathcal{K}_U)\]
	for all $p \in \mathbb{Z}$.
	\label{lmaresmap}
\end{lma}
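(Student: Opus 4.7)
The plan is to construct $l_p$ by showing that $L_Z(\mathbb{I}) \ast -$ restricts compatibly to every subcategory and quotient appearing in Definition \ref{dfnrelcycchowgroups}, and then invoke functoriality of $K_0$; the Chow variant $\ell_p$ will follow by checking that the rational-equivalence subgroup is preserved. First I would show that $L_Z(\mathbb{I}) \ast -$ carries $\mathcal{K}_{(p)}$ into $(\mathcal{K}_U)_{(p)}$. Indeed, for $a \in \mathcal{K}_{(p)} = \tau_{\mathcal{K}}(V_{\leq p})$, Proposition \ref{propsuppprops} gives
\[\supp_{\mathcal{T}}(L_Z(\mathbb{I}) \ast a) = \supp_{\mathcal{T}}(a) \cap U \subset V_{\leq p} \cap U.\]
By Lemma \ref{lmaresaction}, $\Spc(\mathcal{T}_U^c) \cong U$, and $\dim|_U$ is the restriction of $\dim$ along this homeomorphism, so the $p$-skeleton of $\Spc(\mathcal{T}_U^c)$ is exactly $V_{\leq p} \cap U$. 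Combining this with the compatibility of $\mathcal{T}$- and $\mathcal{T}_U$-supports on objects of $\mathcal{K}_U$ (see \cite{stevenson}*{Section 8}, and the discussion in the proof of Proposition \ref{proposition-suppagrees}), one gets $L_Z(\mathbb{I}) \ast a \in (\mathcal{K}_U)_{(p)}$. The same argument with $p-1$ in place of $p$ shows that $\mathcal{K}_{(p-1)}$ is sent into $(\mathcal{K}_U)_{(p-1)}$.

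Next I would invoke the universal property of Verdier quotients to obtain an induced exact functor
\[\bar{F}_p \colon \mathcal{K}_{(p)}/\mathcal{K}_{(p-1)} \longrightarrow (\mathcal{K}_U)_{(p)}/(\mathcal{K}_U)_{(p-1)},\]
and then check that $\bar{F}_p$ preserves compactness. The ambient functor $L_Z(\mathbb{I}) \ast -\colon \mathcal{K} \to \mathcal{K}_U$ is a smashing (Bousfield) localization, so its right adjoint preserves coproducts and the functor itself therefore preserves compactness. Combined with the identifications $(\mathcal{K}_{(p)})^c = \mathcal{K}^c_{(p)}$ and $((\mathcal{K}_U)_{(p)})^c = (\mathcal{K}_U)^c_{(p)}$ of Proposition \ref{propcompactsubcatsame}, and the description of compacts in a Verdier quotient of a compactly generated category by a compactly generated subcategory from \cite{krauseloc}*{Theorem 5.6.1}, this shows that $\bar{F}_p$ restricts to a functor $(\mathcal{K}_{(p)}/\mathcal{K}_{(p-1)})^c \to ((\mathcal{K}_U)_{(p)}/(\mathcal{K}_U)_{(p-1)})^c$. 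Applying $K_0$ then produces $l_p$.

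For the Chow variant, I would verify that $l_p$ sends $j \circ q(\ker(i))$ into $j_U \circ q_U(\ker(i_U))$. This is a diagram chase: the maps $i$, $q$, $j$ of Definition \ref{dfnrelcycchowgroups} are natural in $L_Z(\mathbb{I}) \ast -$ because that functor commutes with the inclusions $\mathcal{K}_{(p)} \hookrightarrow \mathcal{K}_{(p+1)}$ and with the Verdier quotients $\mathcal{K}_{(p)} \to \mathcal{K}_{(p)}/\mathcal{K}_{(p-1)}$ (an immediate consequence of the first step). Hence the induced square of $K_0$-groups commutes, the ideal of rational equivalences is pushed forward, and $\ell_p$ is well defined. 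The main obstacle I expect is the compactness verification in the middle step: one must check that taking compacts commutes with both the inclusion $\mathcal{K}_{(p)} \hookrightarrow \mathcal{K}$ and the Verdier quotient $\mathcal{K}_{(p)} \to \mathcal{K}_{(p)}/\mathcal{K}_{(p-1)}$, and that $L_Z(\mathbb{I}) \ast -$ respects both of these passages. Proposition \ref{propcompactsubcatsame} and \cite{krauseloc}*{Theorem 5.6.1} reduce this to a formal but slightly delicate bookkeeping exercise.
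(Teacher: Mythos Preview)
Your proposal is correct and uses the same essential ingredients as the paper: the support formula $\supp_{\mathcal{T}_U}(L_Z(\mathbb{I})\ast a)=\supp_{\mathcal{T}}(a)\cap U$, the fact that smashing localizations preserve compactness via \cite{krauseloc}*{Proposition 5.5.1, Lemma 5.4.1}, and the universal properties of Verdier quotients and idempotent completion, followed by a commutativity check for the Chow statement.

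The only difference is the order of operations. You first pass to the big subquotients $\mathcal{K}_{(p)}/\mathcal{K}_{(p-1)}\to(\mathcal{K}_U)_{(p)}/(\mathcal{K}_U)_{(p-1)}$ and then argue that this functor restricts to compact objects, which is the ``slightly delicate bookkeeping'' you flag. The paper instead restricts to compacts \emph{first}: it notes that $L_Z(\mathbb{I})\ast-$ takes $\mathcal{K}^c_{(p)}$ into $(\mathcal{K}_U)^c_{(p)}$, then applies the universal properties of the Verdier quotient and of the idempotent completion to obtain directly a functor
\[
\left(\mathcal{K}^c_{(p)}/\mathcal{K}^c_{(p-1)}\right)^{\natural}\longrightarrow\left((\mathcal{K}_U)^c_{(p)}/(\mathcal{K}_U)^c_{(p-1)}\right)^{\natural},
\]
and exhibits the obvious commutative diagram with the inclusions into $\mathcal{K}^c_{(p+1)}$ and $(\mathcal{K}_U)^c_{(p+1)}$. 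This sidesteps the question of whether the induced functor on big quotients preserves compacts; via Remark~\ref{remcompactidcomp} the two targets coincide anyway, so the end result is the same, but the paper's route avoids precisely the step you identify as the main obstacle.
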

\begin{proof}
	Since the localization functor $L_Z(\mathbb{I}) \ast -$ is smashing, it follows from \cite{krauseloc}*{Proposition 5.5.1 and Lemma 5.4.1} that it restricts to a functor
	\[\mathcal{K}^c \to (\mathcal{K}_{U})^c\]
	on the level of compact objects. For any object $A \in \mathcal{K}^c$ we have that $\supp_{\mathcal{T}_U}(L_Z(\mathbb{I}) \ast A) = \supp_{\mathcal{T}}(A) \cap U$ under the identification $\Spc(\mathcal{T}_U^c) = U$ and therefore $L_Z(\mathbb{I}) \ast \mathcal{K}^c_{(p)} \subset \left(\mathcal{K}_{U}\right)^c_{(p)}$ for all $p \in \mathbb{Z}$. By the universal properties of the Verdier quotient and idempotent completion we obtain an induced functor
	\[F: \left(\mathcal{K}^c_{(p)}/\mathcal{K}^c_{(p-1)} \right)^{\natural} \to \left(\left(\mathcal{K}_{U}\right)^c_{(p)}/\left(\mathcal{K}_{U}\right)^c_{(p-1)}\right)^{\natural}\]
	that fits into a commutative diagram
	\[
	\xymatrix{
		\mathcal{K}^c_{(p)} \ar[r] \ar[d] \ar[rd] & \left(\mathcal{K}^c_{(p)}/\mathcal{K}^c_{(p-1)} \right)^{\natural} \ar[rd]^F& \\
		\mathcal{K}^c_{(p+1)} \ar[rd(0.8)]&  \left(\mathcal{K}_{U}\right)^c_{(p)} \ar[r] \ar[d] & \left(\left(\mathcal{K}_{U}\right)^c_{(p)}/\left(\mathcal{K}_{U}\right)^c_{(p-1)}\right)^{\natural} \\
		& \left(\mathcal{K}_{U}\right)^c_{(p+1)} &
	}
	\]
	where the diagonal arrows are all induced by the localization $L_Z(\mathbb{I}) \ast -$, the vertical arrows are inclusions and the horizontal ones Verdier quotients composed with inclusions into the respective idempotent completions. Applying $\Kzero(-)$ to the diagram then yields the desired morphisms $l_p$ and $\ell_p$.
\end{proof}

Let us now prove that $l_p,\ell_p$ are surjective.
\begin{prop}
	The maps $l_p,\ell_p$ from Lemma \ref{lmaresmap} are surjective.
	\label{propellsurj}
\end{prop}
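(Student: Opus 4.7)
The strategy is to use the decomposition of $\Gamma_p\mathcal{K}$ from Proposition \ref{stevensondecomposition} and Lemma \ref{lmacompprod} to identify $l_p$ with a canonical projection of direct sums indexed by the points in $V_p$ and $V_p \cap U$, from which surjectivity is visible by inspection. The surjectivity of $\ell_p$ will then be a formal consequence.

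First I would identify $\mathcal{K}_{(p)}/\mathcal{K}_{(p-1)} \simeq \Gamma_p \mathcal{K}$ via Lemma \ref{quotientisgamma}, and, working with the restricted dimension function $\dim|_U$ on $\Spc(\mathcal{T}_U^c) \cong U$, identify $(\mathcal{K}_U)_{(p)}/(\mathcal{K}_U)_{(p-1)} \simeq \Gamma_p \mathcal{K}_U$ using the fact that the local-to-global principle transfers to the action of $\mathcal{T}_U$ on $\mathcal{K}_U$ by Lemma \ref{lmaresaction}. Combining Proposition \ref{stevensondecomposition} and Lemma \ref{lmacompprod}, and noting that $V_p(\mathcal{T}_U^c) = V_p \cap U$, this gives canonical identifications
\[
\Cyc^{\Delta}_p(\mathcal{T},\mathcal{K}) \;\cong\; \bigoplus_{x \in V_p} \Kzero\!\bigl((\Gamma_x \mathcal{K})^c\bigr), \qquad \Cyc^{\Delta}_p(\mathcal{T}_U,\mathcal{K}_U) \;\cong\; \bigoplus_{x \in V_p \cap U} \Kzero\!\bigl((\Gamma_x \mathcal{K}_U)^c\bigr).
\]

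Next I would analyze what $L_Z(\mathbb{I}) \ast -$ does to each $\Gamma_x$-summand. Using the compatibilities $L_Z(\mathbb{I}) \otimes \Gamma_x(\mathbb{I}) \cong 0$ for $x \in Z$ and $L_Z(\mathbb{I}) \otimes \Gamma_x(\mathbb{I}) \cong \Gamma_x(\mathbb{I})$ for $x \in U$ (as invoked in the proof of Lemma \ref{lmaresaction} from \cite{stevenson}*{Proposition 8.3}), the localization annihilates $\Gamma_x \mathcal{K}$ whenever $x \in Z \cap V_p$, while for $x \in U \cap V_p$ it restricts to an equivalence $\Gamma_x\mathcal{K} \xrightarrow{\sim} \Gamma_x\mathcal{K}_U$ with quasi-inverse the tautological inclusion $\mathcal{K}_U \hookrightarrow \mathcal{K}$. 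Under the identifications above, $l_p$ is therefore the projection onto the summands indexed by $V_p \cap U$, which is split surjective. For $\ell_p$, consider the commutative square whose vertical maps are the defining surjections $\Cyc^{\Delta}_p \twoheadrightarrow \Chow^{\Delta}_p$; surjectivity of $l_p$ together with surjectivity of the right-hand vertical map yields surjectivity of $\ell_p$.

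The main point to check carefully is that the equivalence in step two is compatible with compactness, so that it induces an isomorphism $\Kzero((\Gamma_x \mathcal{K})^c) \xrightarrow{\sim} \Kzero((\Gamma_x \mathcal{K}_U)^c)$. Since compactness of an object is intrinsic to the ambient triangulated category with its coproducts, and an equivalence of triangulated categories preserves set-indexed coproducts, it automatically induces a bijection between the respective subcategories of compact objects, resolving this subtlety. A small bookkeeping step is to ensure the identification of $\mathcal{K}_{(p)}/\mathcal{K}_{(p-1)}$ with $\Gamma_p\mathcal{K}$ genuinely respects the passage to compacts in the sense compatible with Definition \ref{dfnrelcycchowgroups}; this follows from Remark \ref{remcompactidcomp} together with Lemma \ref{quotientisgamma}.
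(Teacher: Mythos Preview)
Your proposal is correct and follows essentially the same route as the paper: decompose $(\mathcal{K}_{(p)}/\mathcal{K}_{(p-1)})^c$ and $((\mathcal{K}_U)_{(p)}/(\mathcal{K}_U)_{(p-1)})^c$ into pieces indexed by $V_p$ and $V_p\cap U$ via Proposition~\ref{stevensondecomposition} and Lemma~\ref{lmacompprod}, identify $L_Z(\mathbb{I})\ast -$ with the projection (using that it kills $\Gamma_x\mathcal{K}$ for $x\in Z$ and is an equivalence for $x\in U$), and deduce surjectivity of $l_p$ and hence of $\ell_p$. The paper cites \cite{stevenson}*{Proposition 8.6} for the equivalence $\Gamma_x\mathcal{K}_U\cong\Gamma_x\mathcal{K}$ where you invoke Proposition 8.3 and the tautological inclusion, but this is the same content packaged slightly differently.
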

\begin{proof}
	It suffices to prove the statement for $l_p$ since its surjectivity implies the surjectivity of~$\ell_p$. Recall from Lemma \ref{quotientisgamma} and Proposition \ref{stevensondecomposition} that we have a decomposition
	\[\mathcal{K}_{(p)}/\mathcal{K}_{(p-1)} \cong \prod_{x \in V_p} \Gamma_x \mathcal{K}~.\]
	The subcategory $\mathcal{K}_{(p-1)}\subset \mathcal{K}_{(p)}$ is compactly generated by Proposition \ref{propcompactsubcatsame} and hence it follows from \cite{krauseloc}*{Theorem 5.6.1} and Lemma \ref{lmacompprod} that
	\[\left(\mathcal{K}_{(p)}^c/\mathcal{K}_{(p-1)}^c\right)^{\natural} \cong \left(\mathcal{K}_{(p)}/\mathcal{K}_{(p-1)}\right)^c \cong \coprod_{x \in V_p} \left(\Gamma_x \mathcal{K}\right)^c~.\] 
	Using that $\Gamma_x \mathcal{K}_U \cong \Gamma_x \mathcal{K}$ for all $x \in U$ (see \cite{stevenson}*{Proposition 8.6}), we also obtain
	\[\left(\left(\mathcal{K}_{U}\right)^c_{(p)}/\left(\mathcal{K}_{U}\right)^c_{(p-1)}\right)^{\natural} \cong \coprod_{x \in V_p \cap U} \left(\Gamma_x \mathcal{K}\right)^c~.\]
	Using this description, the quotient functor $L_Z(\mathbb{I}) \ast -$ is, up to equivalence, simply given as the natural projection 
	\[\coprod_{x \in V_p} \left(\Gamma_x \mathcal{K}\right)^c \to \coprod_{x \in V_p \cap U} \left(\Gamma_x \mathcal{K}\right)^c\]
	since $L_Z(\mathbb{I}) \ast -$ acts as a self-equivalence on $\Gamma_x \mathcal{K}$ when $x \in U$ and as $0$ otherwise, as follows from Proposition \ref{propsuppprops}. It follows that the induced map $l_p$ on Grothendieck groups is surjective.
\end{proof}

Next we try to identify the kernels of $l_p$ and $\ell_p$ with the help of the relative Chow groups.
\begin{lma}
	The category $\mathcal{K}_Z := \Gamma_Z(\mathbb{I}) \ast \mathcal{K}$ is a compactly generated, localizing $\mathcal{T}$-submodule of $\mathcal{K}$, and it coincides with the full subcategory $\tau_{\mathcal{K}}(Z)$ of objects with support contained in $Z$. Furthermore the local-to-global principle holds for the action of $\mathcal{T}$ on $\mathcal{K}_Z$.
	\label{lmasubcataction}
\end{lma}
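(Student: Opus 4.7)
The plan is to deduce almost all of the statement from results already proved in the excerpt, with the key inputs being Lemma \ref{subcatequality} (which identifies $\Gamma_V \mathcal{K}$ with $\tau_{\mathcal{K}}(V)$ under the local-to-global principle) and Proposition \ref{propcompactsubcatsame} (which establishes compact generation of $\tau_{\mathcal{K}}(V)$). Since $Z \subset \Spc(\mathcal{T}^c)$ is closed (and hence specialization-closed), both of these apply to $V = Z$.

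First I would note that $\mathcal{K}_Z = \Gamma_Z(\mathbb{I}) \ast \mathcal{K} = \tau_{\mathcal{K}}(Z)$ is immediate from Lemma \ref{subcatequality}; this simultaneously gives the equality with the support-defined subcategory and, via Proposition \ref{propcompactsubcatsame}, compact generation together with $\mathcal{K}_Z^c = (\mathcal{K}^c)_Z$. The fact that $\mathcal{K}_Z$ is triangulated and closed under coproducts follows either from its description as the kernel of the coproduct-preserving Bousfield localization functor $L_Z(\mathbb{I}) \ast -$ (noted in the proof of Proposition \ref{propcompactsubcatsame}), or from the fact that $\supp$ converts triangles and coproducts into unions of supports.

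Next I would verify that $\mathcal{K}_Z$ is a $\mathcal{T}$-submodule. For $t \in \mathcal{T}$ and $a \in \mathcal{K}_Z$, write $a \cong \Gamma_Z(\mathbb{I}) \ast a'$; using the associator of the action and the symmetry of $\otimes$ in $\mathcal{T}$, we obtain
\[t \ast a \cong t \ast (\Gamma_Z(\mathbb{I}) \ast a') \cong (t \otimes \Gamma_Z(\mathbb{I})) \ast a' \cong \Gamma_Z(\mathbb{I}) \ast (t \ast a') \in \mathcal{K}_Z~.\]
(Equivalently, one can argue directly from the equality $\mathcal{K}_Z = \tau_{\mathcal{K}}(Z)$: by Proposition \ref{propsuppprops}, $\supp(t \ast a) = \supp((t \otimes \Gamma_Z(\mathbb{I})) \ast a') \subset \supp(\Gamma_Z(\mathbb{I})) \subset Z$.)

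Finally, for the local-to-global principle for the restricted action of $\mathcal{T}$ on $\mathcal{K}_Z$: here the point is that because $\mathcal{K}_Z \subset \mathcal{K}$ is itself a localizing $\mathcal{T}$-submodule, coproducts and triangles in $\mathcal{K}_Z$ agree with those in $\mathcal{K}$, so for any object $a \in \mathcal{K}_Z$ the smallest localizing $\mathcal{T}$-submodule $\langle a \rangle_{\ast}^{\mathcal{K}_Z}$ of $\mathcal{K}_Z$ containing $a$ coincides with the smallest such submodule $\langle a \rangle_{\ast}^{\mathcal{K}}$ of $\mathcal{K}$ containing $a$, and similarly for the family $\{\Gamma_x a\}_{x \in \Spc(\mathcal{T}^c)}$ (each $\Gamma_x a$ already lies in $\mathcal{K}_Z$ by the $\mathcal{T}$-submodule property just verified). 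The equality $\langle a \rangle_{\ast}^{\mathcal{K}} = \langle \Gamma_x a \mid x \in \Spc(\mathcal{T}^c)\rangle_{\ast}^{\mathcal{K}}$ then transports the local-to-global principle for $\mathcal{T} \curvearrowright \mathcal{K}$ to $\mathcal{T} \curvearrowright \mathcal{K}_Z$. I do not expect any serious obstacle here; the most delicate point is just to make explicit that $\langle - \rangle_{\ast}$ is computed identically in $\mathcal{K}_Z$ and in $\mathcal{K}$, which is exactly what being a localizing $\mathcal{T}$-submodule buys us.
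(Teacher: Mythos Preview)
Your proposal is correct and follows essentially the same approach as the paper: identify $\mathcal{K}_Z$ with $\tau_{\mathcal{K}}(Z)$ via Lemma \ref{subcatequality}, invoke compact generation (the paper cites \cite{stevenson}*{Corollary 4.11} directly, you go through Proposition \ref{propcompactsubcatsame} which uses the same input), and transport the local-to-global principle by observing that localizing $\mathcal{T}$-submodules of $\mathcal{K}_Z$ are the same as localizing $\mathcal{T}$-submodules of $\mathcal{K}$ contained in $\mathcal{K}_Z$. Your write-up is actually a bit more explicit than the paper's in verifying the $\mathcal{T}$-submodule property and in spelling out why $\langle - \rangle_{\ast}$ computed in $\mathcal{K}_Z$ agrees with that in $\mathcal{K}$, but there is no substantive difference.
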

\begin{proof}
	It is clear that the category $\mathcal{K}_Z$ is a localizing $\mathcal{T}$-submodule and it is compactly generated by \cite{stevenson}*{Corollary 4.11}. Furthermore, we have $\tau_{\mathcal{K}}(Z) = \Gamma_Z(\mathbb{I}) \ast \mathcal{K}$ by Lemma \ref{subcatequality}.
	In order to show that the local-to-global principle holds for the action of $\mathcal{T}$ on $\mathcal{K}_Z$, notice first that a localizing $\mathcal{T}$-submodule of $\mathcal{K}_Z$ is the same as a localizing $\mathcal{T}$-submodule of $\mathcal{K}$ contained in $\mathcal{T}_Z$. Hence, it suffices to show  that for any object $a \in \mathcal{K}_Z$ we have an equality of localizing $\mathcal{T}$-submodules of $\mathcal{K}$
	\[\langle a \rangle_{\ast} = \langle \Gamma_x(\mathbb{I}) \ast a | x \in \Spc(\mathcal{T}^c) \rangle_{\ast}~,\]
	which is true since we assumed the local-to-global principle for the action of $\mathcal{T}$ on $\mathcal{K}$.
\end{proof}

\begin{lma}
	For all $p \in \mathbb{Z}$ there is an exact equivalence
	\[(\mathcal{K}_Z)_{(p)}/(\mathcal{K}_Z)_{(p-1)} \cong \prod_{x \in V_p \cap Z} \Gamma_x \mathcal{K}\]
	and hence
	\[\left((\mathcal{K}_Z)_{(p)}/(\mathcal{K}_Z)_{(p-1)}\right)^c \cong \coprod_{x \in V_p \cap Z} \left(\Gamma_x \mathcal{K}\right)^c~.\]
	\label{lmasubquotientkz}
\end{lma}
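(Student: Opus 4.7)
The strategy is to apply the decomposition machinery of Lemma \ref{quotientisgamma} and Proposition \ref{stevensondecomposition} to the restricted action of $\mathcal{T}$ on $\mathcal{K}_Z$. By Lemma \ref{lmasubcataction}, $\mathcal{K}_Z$ is a compactly generated localizing $\mathcal{T}$-submodule on which the local-to-global principle holds, so all the hypotheses of Convention \ref{convbigcond} are in force for the pair $(\mathcal{T},\mathcal{K}_Z)$. The content of the lemma is then to compare the ``local pieces'' computed inside $\mathcal{K}_Z$ with those computed inside $\mathcal{K}$.

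First I would identify $(\mathcal{K}_Z)_{(p)}$: since $\mathcal{K}_Z = \tau_{\mathcal{K}}(Z)$ by Lemma \ref{lmasubcataction}, an object $a$ of $\mathcal{K}_Z$ has support contained in $V_{\leq p}$ iff it has support contained in $V_{\leq p} \cap Z$, so $(\mathcal{K}_Z)_{(p)} = \tau_{\mathcal{K}}(V_{\leq p} \cap Z)$. Applying Lemma \ref{quotientisgamma} to the action on $\mathcal{K}_Z$ gives
\[(\mathcal{K}_Z)_{(p)}/(\mathcal{K}_Z)_{(p-1)} \cong \Gamma_p \mathcal{K}_Z,\]
and Proposition \ref{stevensondecomposition} then yields
\[\Gamma_p \mathcal{K}_Z \cong \prod_{x \in V_p} \Gamma_x \mathcal{K}_Z,\]
where $\Gamma_x\mathcal{K}_Z$ denotes the essential image of $\Gamma_x(\mathbb{I}) \ast -$ restricted to $\mathcal{K}_Z$.

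Next I would show that this product reduces to $x \in V_p \cap Z$. For $x \notin Z$ and $a \in \mathcal{K}_Z$, Proposition \ref{propsuppprops} gives $\supp(\Gamma_x a) \subset \{x\} \cap \supp(a) \subset \{x\} \cap Z = \emptyset$, hence $\Gamma_x a = 0$ by the local-to-global principle (Remark \ref{locglobvanishing}), so $\Gamma_x \mathcal{K}_Z = 0$. For $x \in V_p \cap Z$, I would check that $\Gamma_x \mathcal{K}_Z$ coincides with $\Gamma_x \mathcal{K}$ (viewed as subcategories of $\mathcal{K}$): for any $b \in \mathcal{K}$ the object $\Gamma_x(\mathbb{I}) \ast b$ has support contained in $\{x\} \subset Z$ and so lies in $\mathcal{K}_Z$, giving $\Gamma_x \mathcal{K} \subset \mathcal{K}_Z$; the idempotency $\Gamma_x(\mathbb{I}) \ast \Gamma_x(\mathbb{I}) \ast b \cong \Gamma_x(\mathbb{I}) \ast b$ then shows that every object of $\Gamma_x \mathcal{K}$ lies in $\Gamma_x \mathcal{K}_Z$, while the reverse inclusion is immediate. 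Combining these two observations produces the first claimed equivalence.

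The final statement on compact objects follows directly by applying Lemma \ref{lmacompprod} to the product decomposition. The main (but minor) subtlety is the identification $\Gamma_x \mathcal{K}_Z = \Gamma_x \mathcal{K}$ for $x \in Z$, which requires the containment $\Gamma_x \mathcal{K} \subset \mathcal{K}_Z$ together with the idempotency of $\Gamma_x(\mathbb{I}) \ast -$; everything else is a formal application of the previously established decomposition results to the action on the submodule $\mathcal{K}_Z$.
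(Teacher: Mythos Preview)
Your proposal is correct and follows essentially the same approach as the paper: both apply the decomposition of Proposition \ref{stevensondecomposition} to the $\mathcal{T}$-action on $\mathcal{K}_Z$, reduce to identifying $\Gamma_x\mathcal{K}_Z$ with $\Gamma_x\mathcal{K}$ for $x\in Z$ and with $0$ otherwise, and then invoke Lemma \ref{lmacompprod} for the compact objects. The only difference is cosmetic: the paper phrases the key identification as $\Gamma_x\Gamma_Z a \cong \Gamma_x a$ (resp.\ $0$) and defers to \cite{stevenson}*{Proof of Proposition 5.7}, whereas you spell it out directly via the support containment $\supp(\Gamma_x b)\subset\{x\}$ and the idempotency of $\Gamma_x(\mathbb{I})\ast-$.
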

\begin{proof}
	The second statement follows from the first by Lemma \ref{lmacompprod}. By Proposition \ref{stevensondecomposition}, we have
	\[(\mathcal{K}_Z)_{(p)}/(\mathcal{K}_Z)_{(p-1)} \cong \prod_{x \in V_p} \Gamma_x (\mathcal{K}_Z)\]
	and hence, in order to show the first statement, it suffices to prove that
	\[\Gamma_x (\mathcal{K}_Z) = \begin{cases} \Gamma_x \mathcal{K} & \text{if}~ x \in Z \\ 0 & \text{otherwise} ~. \end{cases}\]
	Since any object of $\mathcal{K}_Z$ is isomorphic to $\Gamma_Z a$ for some object $a \in \mathcal{K}$, we reduce the proof of this statement to showing that 
	\[\Gamma_x \Gamma_Z a \cong \begin{cases} \Gamma_x a & \text{if}~ x \in Z \\ 0 & \text{otherwise}~. \end{cases}\]
	This exact statement is proved in \cite{stevenson}*{Proof of Proposition 5.7}.
\end{proof}

\begin{lma}
	The inclusion functor
	\[\mathcal{K}_Z \hookrightarrow \mathcal{K}\]
	induces group homomorphisms
	\[i_p: \Cyc^{\Delta}_p(\mathcal{T},\mathcal{K}_Z) \to \Cyc^{\Delta}_p(\mathcal{T},\mathcal{K})\]
	and
	\[\iota_p: \Chow^{\Delta}_p(\mathcal{T},\mathcal{K}_Z) \to \Chow^{\Delta}_p(\mathcal{T}, \mathcal{K})\]
	for all $p \in \mathbb{Z}$.
\end{lma}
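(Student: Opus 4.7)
The plan is to trace through the construction of $\Cyc^{\Delta}_p$ and $\Chow^{\Delta}_p$ and verify functoriality with respect to the inclusion $\mathcal{K}_Z \hookrightarrow \mathcal{K}$. The key observation is that the notion of support for an object $a$ of a $\mathcal{T}$-module depends only on the $\mathcal{T}$-action on $a$ (via the functors $\Gamma_x(\mathbb{I}) \ast -$), not on the ambient category. Consequently, for every $p$ the inclusion restricts to an inclusion $(\mathcal{K}_Z)_{(p)} \hookrightarrow \mathcal{K}_{(p)}$, since any $a \in \mathcal{K}_Z$ with $\supp(a) \subset V_{\leq p}$ certainly still has this property when viewed in $\mathcal{K}$. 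Combining this with the embedding at level $p-1$, the universal property of the Verdier quotient produces an exact functor
\[F_p:~ (\mathcal{K}_Z)_{(p)}/(\mathcal{K}_Z)_{(p-1)} \longrightarrow \mathcal{K}_{(p)}/\mathcal{K}_{(p-1)}.\]

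Next I would check that $F_p$ preserves compactness, so that it restricts to a functor between the subcategories of compacts and thus induces $i_p$ on $\Kzero$. Using the decompositions
\[\bigl((\mathcal{K}_Z)_{(p)}/(\mathcal{K}_Z)_{(p-1)}\bigr)^c \cong \coprod_{x \in V_p \cap Z} (\Gamma_x \mathcal{K})^c \quad \text{and} \quad \bigl(\mathcal{K}_{(p)}/\mathcal{K}_{(p-1)}\bigr)^c \cong \coprod_{x \in V_p} (\Gamma_x \mathcal{K})^c\]
from Lemma~\ref{lmasubquotientkz} and the proof of Proposition~\ref{propellsurj}, one sees that (up to equivalence) $F_p^c$ is simply the inclusion of the summands indexed by $V_p \cap Z \subset V_p$. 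In particular, an object that is compact on the left corresponds to a finite coproduct of objects in the $(\Gamma_x \mathcal{K})^c$ for $x \in V_p \cap Z$, which is visibly compact on the right. Applying $\Kzero$ to $F_p^c$ yields the cycle-level map $i_p$.

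For $\iota_p$, I would verify that $i_p$ sends the subgroup $j_{\mathcal{K}_Z} \circ q_{\mathcal{K}_Z}(\ker i_{\mathcal{K}_Z})$ into $j_{\mathcal{K}} \circ q_{\mathcal{K}}(\ker i_{\mathcal{K}})$, using the labels $i,j,q$ for the three maps from Definition~\ref{dfnrelcycchowgroups}. Because the inclusion $\mathcal{K}_Z \hookrightarrow \mathcal{K}$ also restricts to $(\mathcal{K}_Z)_{(p+1)} \hookrightarrow \mathcal{K}_{(p+1)}$ and is compatible with passing to Verdier quotients and idempotent completions, the four relevant $\Kzero$-diagrams for $\mathcal{K}_Z$ and for $\mathcal{K}$ are linked by a commuting ladder; chasing the definitions then shows that elements of $\ker i_{\mathcal{K}_Z}$ go to elements of $\ker i_{\mathcal{K}}$ and that the maps $q$ and $j$ are compatible. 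Hence $i_p$ descends to the desired $\iota_p$ on Chow groups.

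No step looks genuinely hard: the main point is bookkeeping, namely verifying that the inclusion is compatible with the various constructions (support, Verdier quotient, idempotent completion, restriction to compacts). The only place where one has to be slightly careful is the compatibility of compactness: it is not automatic that an exact inclusion of compactly generated triangulated categories preserves compacts in passing to subquotients, but here the explicit coproduct description via the local-to-global principle makes this transparent.
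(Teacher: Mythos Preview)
Your proposal is correct and follows the same overall strategy as the paper: verify that the inclusion is compatible with the filtration by support, pass to subquotients, restrict to compact objects, and check that the resulting map on $\Kzero$ respects the subgroup defining rational equivalence.

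The one organisational difference worth noting is how compactness is handled. The paper first observes, via \cite{krauseloc}*{Proposition 5.5.1 and Lemma 5.4.1}, that the inclusion $\mathcal{K}_Z \hookrightarrow \mathcal{K}$ already preserves compacts (since $\mathcal{K}_Z$ is smashing), and then builds the functor
\[\underline{I}_p:\left((\mathcal{K}_Z^c)_{(p)}/(\mathcal{K}_Z^c)_{(p-1)}\right)^{\natural} \longrightarrow \left((\mathcal{K}^c)_{(p)}/(\mathcal{K}^c)_{(p-1)}\right)^{\natural}\]
purely from the universal properties of Verdier quotients and idempotent completion; the decomposition into local pieces $\coprod_x(\Gamma_x\mathcal{K})^c$ is only invoked afterwards, in the proof of Proposition~\ref{propkerimeq}, to show that $i_p$ is injective. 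You instead work with the big subquotients first and use the decomposition of Lemma~\ref{lmasubquotientkz} up front to see directly that $F_p$ preserves compacts. Both routes are valid: yours is more concrete and makes the shape of $i_p$ visible immediately, while the paper's is slightly more economical here and defers the decomposition to where it is genuinely needed for injectivity.
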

\begin{proof}
	Again, it follows from \cite{krauseloc}*{Proposition 5.5.1 and Lemma 5.4.1} that the inclusion functor restricts to the level of compact objects:
	\[I: \mathcal{K}_Z^c \hookrightarrow \mathcal{K}^c~.\]
	By the universal property of Verdier localization and idempotent completion, one obtains induced functors
	\[\underline{I}_{p}: \left((\mathcal{K}_Z^c)_{(p)}/(\mathcal{K}_Z^c)_{(p-1)} \right)^{\natural} \longrightarrow \left((\mathcal{K}^c)_{(p)}/(\mathcal{K}^c)_{(p-1)}\right)^{\natural}~.\]
	As we saw in Lemma \ref{lmasubcataction} the category $\mathcal{K}_Z$ is compactly generated, has set-indexed coproducts and a natural action by $\mathcal{T}$ that satisfies the local-to-global principle. Thus it makes sense to talk about the relative cycle groups $\Cyc^{\Delta}_p(\mathcal{K},\mathcal{K}_Z)$ and by the discussion at the beginning of Section \ref{sectionrelchow} we have that 
	\[\Cyc^{\Delta}_p(\mathcal{T},\mathcal{K}_Z) \cong \Kzero\left(\left((\mathcal{K}_Z^c)_{(p)}/(\mathcal{K}_Z^c)_{(p-1)} \right)^{\natural}\right)~.\]
	We see that after applying $\Kzero$, the functor $\underline{I}_{p}$ induces a map 
	\[i_p: \Cyc^{\Delta}_p(\mathcal{T},\mathcal{K}_Z) \to \Cyc^{\Delta}_p(\mathcal{T},\mathcal{K})\]
	and this map respects rational equivalence since $I$ sends 
	\[\ker\left(\Kzero\left(\mathcal{K}_Z^c)_{(p)}\right)  \to \Kzero\left((\mathcal{K}_Z^c)_{(p+1)}\right)\right)\]
	to
	\[\ker\left(\Kzero\left(\mathcal{K}^c_{(p)}\right)  \to \Kzero\left(\mathcal{K}^c_{(p+1)}\right)\right)~.\]
	Thus we also get an induced map
	\[\iota_p: \Chow^{\Delta}_p(\mathcal{T},\mathcal{K}_Z) \to \Chow^{\Delta}_p(\mathcal{T},\mathcal{K})\]
	as desired.
\end{proof}

\begin{prop}
	The map $i_p$ is injective and we have an equality of abelian groups
	\[\im(i_p) = \ker(l_p)\]
	for all $p \in \mathbb{Z}$. Hence, $\im(\iota_p) \subseteq \ker(\ell_p)$.
	\label{propkerimeq}
\end{prop}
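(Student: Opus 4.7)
The plan is to reduce the statement to a transparent observation about direct sum decompositions indexed by $p$-dimensional points of $\Spc(\mathcal{T}^c)$. Combining Lemma \ref{lmasubquotientkz} with the identifications obtained in the proof of Proposition \ref{propellsurj} (via Proposition \ref{stevensondecomposition} and Lemma \ref{lmacompprod}) gives compatible decompositions
\begin{align*}
\Cyc^{\Delta}_p(\mathcal{T},\mathcal{K}_Z) &\cong \bigoplus_{x \in V_p \cap Z} \Kzero((\Gamma_x \mathcal{K})^c), \\
\Cyc^{\Delta}_p(\mathcal{T},\mathcal{K}) &\cong \bigoplus_{x \in V_p} \Kzero((\Gamma_x \mathcal{K})^c), \\
\Cyc^{\Delta}_p(\mathcal{T}_U,\mathcal{K}_U) &\cong \bigoplus_{x \in V_p \cap U} \Kzero((\Gamma_x \mathcal{K})^c).
\end{align*}

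Next I would verify that under these identifications $i_p$ corresponds to the inclusion of the summand indexed by $V_p \cap Z \subseteq V_p$, and $l_p$ corresponds to the projection onto the summand indexed by $V_p \cap U \subseteq V_p$. The latter was in effect carried out at the end of the proof of Proposition \ref{propellsurj}. For $i_p$, the key input is the computation $\Gamma_x(\mathcal{K}_Z) = \Gamma_x \mathcal{K}$ for $x \in Z$ and $\Gamma_x(\mathcal{K}_Z) = 0$ otherwise, established inside the proof of Lemma \ref{lmasubquotientkz}, together with the naturality of the decomposition of Proposition \ref{stevensondecomposition} with respect to inclusions of localizing $\mathcal{T}$-submodules. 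Once this is done, the partition $V_p = (V_p \cap Z) \sqcup (V_p \cap U)$ immediately yields the split short exact sequence
\[0 \to \bigoplus_{x \in V_p \cap Z} \Kzero((\Gamma_x \mathcal{K})^c) \xrightarrow{i_p} \bigoplus_{x \in V_p} \Kzero((\Gamma_x \mathcal{K})^c) \xrightarrow{l_p} \bigoplus_{x \in V_p \cap U} \Kzero((\Gamma_x \mathcal{K})^c) \to 0,\]
proving both the injectivity of $i_p$ and the equality $\im(i_p) = \ker(l_p)$.

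The Chow group statement is then a formal consequence: since $\iota_p$ and $\ell_p$ are by construction the maps induced by $i_p$ and $l_p$ after passing to the quotients by rational equivalence, the vanishing $l_p \circ i_p = 0$ descends to $\ell_p \circ \iota_p = 0$, which is exactly the inclusion $\im(\iota_p) \subseteq \ker(\ell_p)$. The only real obstacle is the bookkeeping in the second step: confirming that, after passing through Verdier quotients and idempotent completions, the inclusion $\mathcal{K}_Z \hookrightarrow \mathcal{K}$ and the localization $L_Z(\mathbb{I}) \ast -$ really induce the naive inclusion and projection of the indexed direct sums. This is a straightforward compatibility check using the universal properties of the quotient and completion together with the identifications $\Gamma_x(\mathcal{K}_Z)$ and $\Gamma_x(\mathcal{K}_U)$ cited above, but it is the point that deserves the most care.
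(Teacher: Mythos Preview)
Your proposal is correct and follows essentially the same route as the paper: identify the three cycle groups with direct sums of $\Kzero((\Gamma_x\mathcal{K})^c)$ indexed by $V_p\cap Z$, $V_p$, and $V_p\cap U$ respectively (via Proposition~\ref{stevensondecomposition}, Lemma~\ref{lmacompprod}, and Lemma~\ref{lmasubquotientkz}), observe that $i_p$ and $l_p$ become the evident inclusion and projection, and conclude. The paper phrases the identification at the level of the categories $\coprod_x(\Gamma_x\mathcal{K})^c$ before applying $\Kzero$, while you pass to Grothendieck groups immediately, but this is a cosmetic difference.
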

\begin{proof}
	Using the descriptions of Lemma \ref{stevensondecomposition} and \ref{lmasubquotientkz}, the functor $\underline{I}_{p}$ is given as the canonical inclusion
	\[\left((\mathcal{K}_Z)_{(p)}/(\mathcal{K}_Z)_{(p-1)}\right)^c \cong \coprod_{x \in V_p \cap Z} \left(\Gamma_x \mathcal{K}\right)^c \hookrightarrow \coprod_{x \in V_p} \left(\Gamma_x \mathcal{K}\right)^c \cong \left(\mathcal{K}_{(p)}/\mathcal{K}_{(p-1)}\right)^c~,\]
	and hence the induced map $i_p = \Kzero(\underline{I}_{p}): \Cyc^{\Delta}_p(\mathcal{T},\mathcal{K}_Z) \to \Cyc^{\Delta}_p(\mathcal{T},\mathcal{K})$ is injective. Recall from the proof of Proposition \ref{propellsurj} that $l_p$ is induced by the projection functor
	\[\coprod_{x \in V_p} \left(\Gamma_x \mathcal{K}\right)^c \to \coprod_{x \in V_p \cap U} \left(\Gamma_x \mathcal{K}\right)^c\]
	from which we see that
	\[\ker(l_p) = \coprod_{x \in V_p \cap Z} \left(\Gamma_x \mathcal{K}\right)^c \subset \coprod_{x \in V_p} \left(\Gamma_x \mathcal{K}\right)^c~.\]
	This conincides with the image of $i_p$ by the previous discussion.
\end{proof}

The following theorem summarizes the results that we obtained so far.
\begin{thm}
	Let $\mathcal{T}$ be a compactly-rigidly generated tensor triangulated category  acting on a triangulated category $\mathcal{K}$ such that the action satisfies the local-to-global principle. Let $U \subset \Spc(\mathcal{T}^c)$ be an open subset with closed complement $Z$, denote by $\mathcal{K}_Z \subset \mathcal{K}$ the full subcategory of those objects $a$ with $\supp(a) \subset Z$ and let $\mathcal{K}_U := \mathcal{K}/\mathcal{K}_Z$. Then the quotient functor $\mathcal{K} \to \mathcal{K}_U$ and the inclusion $\mathcal{K}_Z \to \mathcal{K}$ induce exact localization sequences
	\[0 \to \Cyc^{\Delta}_p(\mathcal{T},\mathcal{K}_Z) \xrightarrow{i_p} \Cyc^{\Delta}_p(\mathcal{T}, \mathcal{K}) \xrightarrow{l_p} \Cyc^{\Delta}_p(\mathcal{T}_U, \mathcal{K}_U) \to 0\]
	and
	\[\Chow^{\Delta}_p(\mathcal{T}, \mathcal{K}) \xrightarrow{\ell_p} \Chow^{\Delta}_p(\mathcal{T}_U, \mathcal{K}_U) \to 0\]
	for all $p \in \mathbb{Z}$. Furthermore, $i_P$ induces a map $\iota_p: \Chow^{\Delta}_p(\mathcal{T}, \mathcal{K}_Z) \to \Chow^{\Delta}_p(\mathcal{T}, \mathcal{K})$ such that $\im(\iota_p) \subset \ker(\ell_p)$.
	\label{thmlocseq}
\end{thm}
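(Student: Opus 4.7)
The theorem is essentially a summary statement, so the proof plan is to assemble the building blocks that have already been established earlier in the section. No new ideas should be required; the main task is to verify that the pieces fit together as a clean exact sequence statement.

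First I would construct the maps: the homomorphisms $l_p$ and $\ell_p$ were produced in Lemma \ref{lmaresmap} via the smashing localization $L_Z(\mathbb{I}) \ast -$, which restricts to compact objects and then descends through the universal properties of Verdier quotients and idempotent completions. The map $i_p$ and its Chow-level version $\iota_p$ come from the corresponding discussion for the inclusion $\mathcal{K}_Z \hookrightarrow \mathcal{K}$, noting that $\mathcal{K}_Z = \tau_{\mathcal{K}}(Z)$ is compactly generated with an action of $\mathcal{T}$ satisfying the local-to-global principle by Lemma \ref{lmasubcataction}, so the relative cycle groups $\Cyc^{\Delta}_p(\mathcal{T},\mathcal{K}_Z)$ are defined in the first place.

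Next I would record exactness on the cycle level. Injectivity of $i_p$ and the equality $\im(i_p) = \ker(l_p)$ are exactly Proposition \ref{propkerimeq}, which identifies all three groups with Grothendieck groups of the coproduct categories
\[
\coprod_{x \in V_p \cap Z} (\Gamma_x \mathcal{K})^c \hookrightarrow \coprod_{x \in V_p} (\Gamma_x \mathcal{K})^c \twoheadrightarrow \coprod_{x \in V_p \cap U} (\Gamma_x \mathcal{K})^c,
\]
via the decomposition from Proposition \ref{stevensondecomposition} and Lemma \ref{lmacompprod} together with the comparison $\Gamma_x \mathcal{K}_U \cong \Gamma_x \mathcal{K}$ for $x \in U$ used in the proof of Proposition \ref{propellsurj}. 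Surjectivity of $l_p$ is exactly Proposition \ref{propellsurj}. Putting these together yields the first short exact sequence.

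For the Chow-level sequence, surjectivity of $\ell_p$ follows formally from that of $l_p$: since $\Chow^{\Delta}_p(\mathcal{T},\mathcal{K})$ is a quotient of $\Cyc^{\Delta}_p(\mathcal{T},\mathcal{K})$ and likewise for $\mathcal{K}_U$, any lift of a class in $\Chow^{\Delta}_p(\mathcal{T}_U,\mathcal{K}_U)$ to the cycle group can be pulled back through $l_p$ and then pushed down to $\Chow^{\Delta}_p(\mathcal{T},\mathcal{K})$. The inclusion $\im(\iota_p) \subset \ker(\ell_p)$ is the final assertion of Proposition \ref{propkerimeq}, which follows immediately from $\im(i_p) = \ker(l_p)$ and the fact that the vertical quotient maps from cycle groups to Chow groups commute with $i_p,l_p,\iota_p,\ell_p$ by construction. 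The only real obstacle is just careful bookkeeping to check that the rational equivalence relations on both sides are respected, which is built into how $l_p$ and $i_p$ were defined via the commutative diagram of Lemma \ref{lmaresmap}; no further argument is needed, and in particular, as the statement already warns, equality $\im(\iota_p) = \ker(\ell_p)$ is not claimed here.
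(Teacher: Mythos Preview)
Your proposal is correct and matches the paper's approach exactly: the paper presents Theorem \ref{thmlocseq} explicitly as a summary of the preceding results and gives no separate proof, so assembling Lemma \ref{lmaresmap}, Lemma \ref{lmasubcataction}, Proposition \ref{propellsurj}, and Proposition \ref{propkerimeq} as you do is precisely what is intended. The only minor difference is that you rederive surjectivity of $\ell_p$ from that of $l_p$, whereas the paper already states both together in Proposition \ref{propellsurj}; either way the argument is the same.
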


\begin{rem}
	We do not know whether we also have $\im(\iota_p) = \ker(\ell_p)$ in general. However, under an additional hypothesis, we are able to prove $\ker(\ell_p) = 0$ when $p > \dim(Z)$, see Proposition \ref{propchowresiso}.
\end{rem}

\begin{rem}
	When $\mathcal{K} = \mathcal{T}$, Theorem \ref{thmlocseq} combines with Proposition \ref{relativecompactcomparison} and gives short exact sequences
	\[0 \to \Cyc^{\Delta}_p(\mathcal{T},\mathcal{T}_Z) \to  \Cyc^{\Delta}_p(\mathcal{T}^c) \to \Cyc^{\Delta}_p((\mathcal{T}_U)^c) \to 0\]
	and
	\[\Chow^{\Delta}_p(\mathcal{T}^c) \to \Chow^{\Delta}_p((\mathcal{T}_U)^c) \to 0\]
	These exact sequences should be compared to the corresponding ones for cycle and Chow groups of algebraic varieties (see \cite{fulton}*{Proposition 1.8}): if $X$ is an algebraic variety, $Z \subset X$ a closed subscheme with open complement $U$, then we get exact sequences of cycle groups
	\[0 \to \Cyc_p(Z) \to \Cyc_p(X) \to  \Cyc_p(U) \to 0\]
	and Chow groups
	\[\Chow_p(Z) \to \Chow_p(X) \to  \Chow_p(U) \to 0\]
	for all $p \in \mathbb{Z}$.
	\label{remvarexseq}
\end{rem}

The localization sequence for relative cycle groups implies that $l_p: \Cyc^{\Delta}_p(\mathcal{T}, \mathcal{K}) \to \Cyc^{\Delta}_p(\mathcal{T}_U, \mathcal{K}_U) $ is an isomorphism when $p > \dim(Z)$: in this case, $\Cyc^{\Delta}_p(\mathcal{T},\mathcal{K}_Z) = 0$ since every object of $\mathcal{K}_Z^c$ has dimension of support~$<p$. Let us finish by showing that with one extra hypothesis, we also obtain isomorphisms on the level of Chow groups in this situation.
\begin{prop}
	Assume that $p > \dim(Z)$ and that $\mathcal{K}^c/\mathcal{K}^c_Z$ is idempotent complete. Then 
	\[\ell_p: \Chow^{\Delta}_p(\mathcal{T}, \mathcal{K}) \to \Chow^{\Delta}_p(\mathcal{T}_U, \mathcal{K}_U) \]
	is an isomorphism.
	\label{propchowresiso}
\end{prop}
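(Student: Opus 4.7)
The plan is to combine the fact that $l_p$ is already an isomorphism on cycle groups when $p > \dim(Z)$---this follows at once from Theorem \ref{thmlocseq}, since $\Cyc^{\Delta}_p(\mathcal{T},\mathcal{K}_Z) = 0$ by Lemma \ref{lmasubquotientkz} (as $V_p \cap Z = \emptyset$)---with a comparison of the rational equivalence subgroups on both sides. Surjectivity of $\ell_p$ was established in Proposition \ref{propellsurj}, so it remains to prove injectivity. Since $l_p$ is a bijection, this reduces to the following lifting statement: every $\bar\beta \in \ker(i^U) \subset \Kzero(\mathcal{K}^c_{U,(p)})$ is the image, under $L_Z \ast -: \Kzero(\mathcal{K}^c_{(p)}) \to \Kzero(\mathcal{K}^c_{U,(p)})$, of some $\beta \in \ker(i)$. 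Granting this, for any $x \in \Cyc^{\Delta}_p(\mathcal{T},\mathcal{K})$ representing a class in $\ker(\ell_p)$ we may write $l_p(x) = j^U \circ q^U(\bar\beta)$ with $\bar\beta \in \ker(i^U)$, and then $l_p(j \circ q(\beta)) = j^U \circ q^U(L_Z \ast \beta) = l_p(x)$ forces $x = j \circ q(\beta) \in j \circ q(\ker i) = R_p$.

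To extract the lifting, I would first use the hypothesis to obtain compatible $\Kzero$-localization sequences. The idempotent completeness of $\mathcal{K}^c/\mathcal{K}^c_Z$ together with \cite{krauseloc}*{Theorem 5.6.1} yields $\mathcal{K}^c/\mathcal{K}^c_Z \cong \mathcal{K}^c_U$. Since $p > \dim(Z)$, we have $\mathcal{K}^c_Z \subset \mathcal{K}^c_{(p)} \subset \mathcal{K}^c_{(p+1)}$, and \cite{krauseloc}*{Lemma 4.7.1} provides a fully faithful functor $\mathcal{K}^c_{(q)}/\mathcal{K}^c_Z \hookrightarrow \mathcal{K}^c_U$ for $q \in \{p,p+1\}$. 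The essential image equals $\mathcal{K}^c_{U,(q)}$: on the one hand an object of $\mathcal{K}^c_{(q)}$ has $\mathcal{T}_U$-support $\supp_{\mathcal{T}}(a) \cap U \subset V_{\leq q} \cap U$; on the other, any $a \in \mathcal{K}^c$ with $\supp_{\mathcal{T}}(a) \cap U \subset V_{\leq q}$ satisfies $\supp_{\mathcal{T}}(a) \subset V_{\leq q} \cup Z = V_{\leq q}$ (using $Z \subset V_{\leq q}$ because $p > \dim(Z)$), hence already lies in $\mathcal{K}^c_{(q)}$. Since $\mathcal{K}^c_{U,(q)}$ is idempotent complete (thick in the idempotent complete $\mathcal{K}^c_U$), the standard right exact $\Kzero$-sequence for a Verdier quotient yields
\begin{equation*}
\Kzero(\mathcal{K}^c_Z) \to \Kzero(\mathcal{K}^c_{(q)}) \xrightarrow{L_Z \ast -} \Kzero(\mathcal{K}^c_{U,(q)}) \to 0
\end{equation*}
for $q = p, p+1$, and these fit into a commutative ladder via $i$ and $i^U$.

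The lifting then follows by a direct diagram chase. Given $\bar\beta \in \ker(i^U)$, right exactness at $q = p$ produces some $\beta \in \Kzero(\mathcal{K}^c_{(p)})$ with $L_Z \ast \beta = \bar\beta$. Then $i(\beta)$ maps to $i^U(\bar\beta) = 0$ in $\Kzero(\mathcal{K}^c_{U,(p+1)})$, so $i(\beta) = [z]$ for some $z \in \mathcal{K}^c_Z$ (viewed in $\mathcal{K}^c_{(p+1)}$). Since $z \in \mathcal{K}^c_Z \subset \mathcal{K}^c_{(p)}$, setting $\beta' := \beta - [z] \in \Kzero(\mathcal{K}^c_{(p)})$ gives $i(\beta') = 0$, while $L_Z \ast \beta' = \bar\beta$ because $L_Z \ast z = 0$ by Remark \ref{locglobvanishing} (as $\supp(z) \cap U = \emptyset$). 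Thus $\beta'$ is the required lift. The main obstacle is producing the two $\Kzero$-localization sequences in the form above; this is exactly where the idempotent completeness hypothesis becomes indispensable, since without it $\mathcal{K}^c_{(q)}/\mathcal{K}^c_Z$ would only be a dense subcategory of $\mathcal{K}^c_{U,(q)}$ and surjectivity onto $\Kzero(\mathcal{K}^c_{U,(q)})$ could fail.
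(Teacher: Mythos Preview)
Your argument is correct and follows essentially the same route as the paper: both proofs use the idempotent completeness hypothesis together with \cite{krauseloc}*{Theorem 5.6.1} to identify $\mathcal{K}^c_{(q)}/\mathcal{K}^c_Z$ with $(\mathcal{K}_U)^c_{(q)}$ for $q \in \{p,p+1\}$, and then deduce that the rational equivalence subgroups correspond under the isomorphism $l_p$. The paper compresses your diagram chase into the single sentence ``since $\pi_p$ is the quotient map killing the subgroup $S$ generated by objects of $\mathcal{K}^c_Z$ and $\pi_{p+1}$ is the quotient map killing $i(S)$, it follows that $\ker(i_U) = \pi_p(\ker(i))$''; your explicit lift-and-correct argument (choose $\beta$, subtract $[z]$) is precisely the content of that assertion, spelled out.
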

\begin{proof}
	Let $q \geq p$. First note that $p > \dim(Z)$ implies $\mathcal{K}_{Z} \subset \mathcal{K}_{(q)}$ and hence $(\mathcal{K}_U)_{(q)} = \mathcal{K}_{(q)}/\mathcal{K}_Z$ by the definition of the dimension function on $\mathcal{T}_U$. Since we assumed $\mathcal{K}^c/\mathcal{K}^c_Z$ idempotent complete, the same holds for $\mathcal{K}^c_{(q)}/\mathcal{K}^c_Z$ and in particular the latter category is identified with $(\mathcal{K}_U)^c_{(q)}$ by \cite{krauseloc}*{Theorem 5.6.1}. Thus, the map $\pi_q : \Kzero(\mathcal{K}^c_{(q)}) \to \Kzero((\mathcal{K}_U)^c_{(q)})$ induced by the Verdier quotient functor is surjective. Now, consider the commutative diagram
	\[
	\xymatrix{
		\Kzero(\mathcal{K}^c_{(p)}) \ar[r]^{i} \ar[d]^-{q^{\natural}} \ar[dr]^{\pi_p} & \Kzero(\mathcal{K}^c_{(p+1)}) \ar[rd]^{\pi_{p+1}} & \\
		\underset{\textstyle = \Cyc^{\Delta}_{p}(\mathcal{T},\mathcal{K})}{\underbrace{\Kzero\left((\mathcal{K}^c_{(p)}/\mathcal{K}^c_{(p-1)})^{\natural}\right)}}  \ar[rd]^{l_p} \ar[rd]^{l_p} & \Kzero((\mathcal{K}_U)^c_{(p)}) \ar[d]^-{q^{\natural}_U} \ar[r]^{i_U} & \Kzero((\mathcal{K}_U)^c_{(p+1)}) \\
		& \underset{\textstyle = \Cyc^{\Delta}_{p}(\mathcal{T}_U, \mathcal{K}_U)}{\underbrace{\Kzero\left(((\mathcal{K}_U)^c_{(p)}/(\mathcal{K}_U)^c_{(p-1)})^{\natural}\right)}} &
	}
	\]
	with $i, i_U$ induced by the inclusion functors and $q^{\natural}, q_U^{\natural}$ induced by the Verdier quotient functors followed by the inclusions into the respective idempotent completions. Since $\pi_p$ is the quotient map killing the subgroup $S$ generated by objects of $\mathcal{K}_Z^c$ and $\pi_{p+1}$ is the quotient map killing $i(S)$, it follows that $\ker(i_U) = \pi_p(\ker(i))$. Thus, the commutativity of the lower-left square yields
	\[q^{\natural}_U(\ker(i_U)) = q^{\natural}_U(\pi_p(\ker(i))) = l_p(q^{\natural}(\ker(i)))~.\]
	In other words, the isomorphism $l_p$ identifies the cycles rationally equivalent to zero in $\Cyc^{\Delta}_{p}(\mathcal{T},\mathcal{K})$ with the cycles rationally equivalent to zero in $\Cyc^{\Delta}_{p}(\mathcal{T}_U, \mathcal{K}_U)$. Hence, the induced map $\ell: \Chow^{\Delta}_p(\mathcal{T}, \mathcal{K}) \to \Chow^{\Delta}_p(\mathcal{T}_U, \mathcal{K}_U)$ is an isomorphism a well.
\end{proof}

\begin{rem}
	For a noetherian scheme $X, \mathcal{T} = \mathrm{D}(X), \mathcal{K} = \mathrm{K}(\mathrm{Inj}(X))$ and the action $\ast$ as defined in Section \ref{sectionkinj}, we have 
	\[\mathcal{K}^c/\mathcal{K}^c_Z \cong \Db(\mathrm{Coh}(X))/\mathrm{D}^{\mathrm{b}}_Z(\mathrm{Coh}(X)) \cong \Db(\mathrm{Coh}(X)/\mathrm{Coh}_Z(X))\]
	where $\Db_Z(\mathrm{Coh}(X)) \subset \Db(\mathrm{Coh}(X))$ denotes the full triangulated subcategory of complexes with homological support contained in $Z$ and $\mathrm{Coh}_Z(X) \subset \mathrm{Coh}(X)$ denotes the Serre subcategory of coherent sheaves with support contained in $Z$. Indeed, this is a consequence of \cite{keller}*{Section 1.15} and \cite{kleinchow}*{Lemma 3.2.5}. Since the bounded derived category of an abelian category is idempotent complete (see \cite{balschlichidem}*{Corollary 2.10}), we see that in this case, the extra condition of Proposition \ref{propchowresiso} is met.
\end{rem}

\begin{bibdiv}
	\addcontentsline{toc}{section}{References}
	\begin{biblist}
		\bib{balmer2005spectrum}{article}{
			author={Balmer, Paul},
			title={The spectrum of prime ideals in tensor triangulated categories},
			date={2005},
			ISSN={0075-4102},
			journal={J. Reine Angew. Math.},
			volume={588},
			pages={149\ndash 168},
			url={http://dx.doi.org/10.1515/crll.2005.2005.588.149},
		}
		
		\bib{balmerfiltrations}{article}{
			author={Balmer, Paul},
			title={Supports and filtrations in algebraic geometry and modular
				representation theory},
			date={2007},
			ISSN={0002-9327},
			journal={Amer. J. Math.},
			volume={129},
			number={5},
			pages={1227\ndash 1250},
			url={http://dx.doi.org/10.1353/ajm.2007.0030},
		}

		\bib{balmerchow}{article}{
			author={Balmer, Paul},
			title={Tensor triangular {C}how groups},
			date={2013},
			ISSN={0393-0440},
			journal={J. Geom. Phys.},
			volume={72},
			pages={3\ndash 6},
			url={http://dx.doi.org/10.1016/j.geomphys.2013.03.017},
		}

		\bib{balfavibig}{article}{
			author={Balmer, Paul},
			author={Favi, Giordano},
			title={Generalized tensor idempotents and the telescope conjecture},
			date={2011},
			ISSN={0024-6115},
			journal={Proc. Lond. Math. Soc. (3)},
			volume={102},
			number={6},
			pages={1161\ndash 1185},
			url={http://dx.doi.org/10.1112/plms/pdq050},
		}
		
		\bib{balschlichidem}{article}{
			author={Balmer, Paul},
			author={Schlichting, Marco},
			title={Idempotent completion of triangulated categories},
			date={2001},
			ISSN={0021-8693},
			journal={J. Algebra},
			volume={236},
			number={2},
			pages={819\ndash 834},
			url={http://dx.doi.org/10.1006/jabr.2000.8529},
		}
		
		\bib{bik-localcohomsupp}{article}{
			author={Benson, David~J.},
			author={Iyengar, Srikanth B.},
			author={Krause, Henning},
			title={Local cohomology and support for triangulated categories},
			journal={Ann. Sci. \'Ec. Norm. Sup\'er. (4)},
			volume={41},
			date={2008},
			number={4},
			pages={573--619},
			issn={0012-9593},
		}

		\bib{bondalvdbergh}{article}{
			author={Bondal, Alexei},
			author={van~den Bergh, Michel},
			title={Generators and representability of functors in commutative and
				noncommutative geometry},
			date={2003},
			ISSN={1609-3321},
			journal={Mosc. Math. J.},
			volume={3},
			number={1},
			pages={1\ndash 36, 258},
		}

		\bib{fulton}{book}{
			author={Fulton, William},
			title={Intersection theory},
			edition={Second},
			series={Ergebnisse der Mathematik und ihrer Grenzgebiete. 3. Folge. A
				Series of Modern Surveys in Mathematics},
			publisher={Springer-Verlag},
			address={Berlin},
			date={1998},
			volume={2},
			ISBN={3-540-62046-X; 0-387-98549-2},
			url={http://dx.doi.org/10.1007/978-1-4612-1700-8},
		}
		
		\bib{gillespiemodel}{article}{
			author={Gillespie, James},
			title={Kaplansky classes and derived categories},
			date={2007},
			ISSN={0025-5874},
			journal={Math. Z.},
			volume={257},
			number={4},
			pages={811\ndash 843},
			url={http://dx.doi.org/10.1007/s00209-007-0148-x},
		}

		\bib{hovpalmstrick}{book}{
			author={Hovey, Mark},
			author={Palmieri, John~H.},
			author={Strickland, Neil~P.},
			title={Axiomatic stable homotopy theory},
			publisher={American Mathematical Society},
			date={1997},
			number={610},
		}

		\bib{keller}{article}{
			author={Keller, Bernhard},
			title={On the cyclic homology of exact categories},
			date={1999},
			ISSN={0022-4049},
			journal={J. Pure Appl. Algebra},
			volume={136},
			number={1},
			pages={1\ndash 56},
			url={http://dx.doi.org/10.1016/S0022-4049(97)00152-7},
		}

		\bib{kleinchow}{misc}{
			author={Klein, Sebastian},
			title={Chow groups for tensor triangulated categories},
			date={2014},
			note={preprint, arXiv:1301.0707v2 [math.AG]},
		}
		
		\bib{kleinintersection}{misc}{
			author={Klein, Sebastian},
			title={Intersection products for tensor triangular Chow groups},
			date={2015},
			note={preprint, arXiv:1505.07615 [math.CT]},
		}
		
		\bib{krause-stablederived}{article}{
			author={Krause, Henning},
			title={The stable derived category of a Noetherian scheme},
			journal={Compos. Math.},
			volume={141},
			date={2005},
			number={5},
			pages={1128--1162},
			issn={0010-437X},
		}
		
		\bib{krauseloc}{article}{
			author={Krause, Henning},
			title={Localization theory for triangulated categories},
			conference={
				title={Triangulated categories},
			},
			book={
				series={London Math. Soc. Lecture Note Ser.},
				volume={375},
				publisher={Cambridge Univ. Press, Cambridge},
			},
			date={2010},
			pages={161--235},
		}

		\bib{murfet-thesis}{thesis}{
			author={Murfet, Daniel},
			title={The Mock Homotopy Category of Projectives and Grothendieck Duality},
			type={Ph.\ D.\ thesis},
			date={2007},
			note={Available at \url{http://www.therisingsea.org/thesis.pdf}},
		}

		\bib{neemantc}{book}{
			author={Neeman, Amnon},
			title={Triangulated categories},
			series={Annals of Mathematics Studies},
			publisher={Princeton University Press},
			address={Princeton, NJ},
			date={2001},
			volume={148},
			ISBN={0-691-08685-0; 0-691-08686-9},
		}
		
		\bib{quillenhigher}{article}{
			author={Quillen, Daniel},
			title={Higher algebraic $K$-theory. I},
			conference={
				title={Algebraic K-theory, I: Higher K-theories},
				address={Proc. Conf., Battelle Memorial Inst., Seattle, Wash.},
				date={1972},
			},
			book={
				publisher={Springer, Berlin},
			},
			date={1973},
			pages={85--147. Lecture Notes in Math., Vol. 341},
		}

		\bib{rouquierderived}{article}{
			author={Rouquier, Rapha{\"e}l},
			title={Derived categories and algebraic geometry},
			conference={
				title={Triangulated categories},
			},
			book={
				series={London Math. Soc. Lecture Note Ser.},
				volume={375},
				publisher={Cambridge Univ. Press, Cambridge},
			},
			date={2010},
			pages={351--370},
		}

		\bib{stacks-project}{misc}{
			author={{Stacks Project Authors}, The},
			title={Stacks project},
			date={2014},
			note={Available at \url{http://stacks.math.columbia.edu}},
		}

		\bib{stevenfilt}{misc}{
			author={Stevenson, Greg},
			title={Filtrations via tensor actions},
			date={2012},
			note={preprint, arXiv:1206.2721 [math.CT]},
		}
		
		\bib{stevenson}{article}{
			author={Stevenson, Greg},
			title={Support theory via actions of tensor triangulated categories},
			date={2013},
			journal={J. Reine Angew. Math.},
			volume={681},
			pages={219\ndash 254},
		}
		
		\bib{stevenson-singularity}{article}{
			author = {Stevenson, Greg},
			title = {Subcategories of singularity categories via tensor actions},
			journal = {Compositio Mathematica},
			volume = {150},
			issue = {02},
			month = {2},
			year = {2014},
			issn = {1570-5846},
			pages = {229--272},

		}

	\end{biblist}
\end{bibdiv}
\small{\textsc{Sebastian Klein, Universiteit Antwerpen, Departement Wiskunde-Informatica, Middelheimcampus, Middelheimlaan 1, 2020 Antwerp, Belgium}\\
	\textit{E-mail address:} \texttt{sebastian.klein@uantwerpen.be}}

\end{document}